\declaretheorem[name=Theorem,within=section]{thm}
\declaretheorem[name=Lemma,sibling=thm]{lemma}
\declaretheorem[name=Question,sibling=thm]{quest}
\declaretheorem[name=Proposition,sibling=thm]{prop}
\newtheorem{cor}[thm]{Corollary}
\declaretheorem[name=Claim,sibling=thm]{claim}
\declaretheorem[name=Subclaim,within=claim]{subclaim}
\newtheorem{obs}[thm]{Observation}
\declaretheorem[name=Definition,sibling=thm,style=definition]{dfn}
\declaretheorem[name=Plan,style=definition,numbered=no]{plan}
\declaretheorem[name=Algorithm,style=definition,numbered=no]{alg}
\newcommand{\bF}{\mathbf{F}}
\newcommand{\bG}{\mathbf{G}}
\newcommand{\NN}{\mathbb{N}}
\newcommand{\cA}{\mathcal{A}}
\newcommand{\cB}{\mathcal{B}}
\newcommand{\cC}{\mathcal{C}}
\newcommand{\cD}{\mathcal{D}}
\newcommand{\cE}{\mathcal{E}}
\newcommand{\cF}{\mathcal{F}}
\newcommand{\cG}{\mathcal{G}}
\newcommand{\cH}{\mathcal{H}}
\newcommand{\cM}{\mathcal{M}}
\newcommand{\cN}{\mathcal{N}}
\newcommand{\cQ}{\mathcal{Q}}
\newcommand{\cR}{\mathcal{R}}
\newcommand{\cS}{\mathcal{S}}
\newcommand{\cT}{\mathcal{T}}
\newcommand{\vell}{\mathbf{l}}
\renewcommand{\Pr}{\mathbb{P}}
\newcommand{\Ex}{\mathbb{E}}
\newcommand{\1}{\mathbbm{1}} 
\newcommand{\Bin}{\mathrm{Bin}}
\newcommand{\Pois}{\mathrm{Pois}}
\newcommand{\eps}{\varepsilon}
\newcommand{\br}[1]{\llbracket{#1}\rrbracket}
\renewcommand{\le}{\leqslant}
\renewcommand{\ge}{\geqslant}
\newcommand{\cFQL}{\cF_Q^{\mathrm{low}}}
\newcommand{\cHQL}{\cH_Q^{\mathrm{low}}}
\newcommand{\cFQH}{\cF_Q^{\mathrm{high}}}
\newcommand{\cHQH}{\cH_Q^{\mathrm{high}}}
\newcommand{\cHh}{\hat{\cH}}
\newcommand{\balpha}{\bar\alpha}
\newcommand{\vmin}{v_{\min}}
\newcommand{\cGb}{\bar{\cG}}
\newcommand{\cGh}{\hat{\cG}}
\newcommand{\Cpar}{C_{\partial}}
\newcommand{\Clow}{C_{\mathrm{low}}}
\newcommand{\chigh}{c_{\mathrm{high}}}
\newcommand{\Cth}{C_{\theta}}
\newcommand{\deficit}{\mathrm{def}}
\newcommand{\ext}{\mathrm{ext}}
\renewcommand{\int}{\mathrm{int}}
\newcommand{\crit}{\mathrm{crit}}
\newcommand{\core}{\mathrm{core}}
\newcommand{\maxcut}{\mathrm{maxcut}}
\author{Ilay Hoshen}
\address{School of Mathematical Sciences, Tel Aviv University, Tel Aviv 6997801, Israel}
\email{ilayhoshen@gmail.com}
\author{Wojciech Samotij}
\address{School of Mathematical Sciences, Tel Aviv University, Tel Aviv 6997801, Israel}
\email{samotij@tauex.tau.ac.il}
\title{Simonovits's theorem in random graphs}
\thanks{This research was supported by: the Israel Science Foundation grant 2110/22; the grant 2019679 from the United States--Israel Binational Science Foundation (BSF) and the United States National Science Foundation (NSF); and the ERC Consolidator Grant 101044123 (RandomHypGra).}
\begin{document}

\begin{abstract}
  Let $H$ be a graph with $\chi(H) = r+1$.
  Simonovits's theorem states that, if $H$ is edge-critical, the unique largest $H$-free subgraph of $K_n$ is its largest $r$-partite subgraph, provided that $n$ is sufficiently large.
  We show that the same holds with $K_n$ replaced by the binomial random graph $G_{n,p}$ whenever $H$ is also strictly $2$-balanced and $p \ge (\theta_H+o(1)) n^{-\frac{1}{m_2(H)}} (\log n)^{\frac{1}{e_H-1}}$ for some explicit constant $\theta_H$, which we believe to be optimal. This (partially) resolves a conjecture of DeMarco and Kahn.
\end{abstract}

\maketitle

\setcounter{tocdepth}{1}
\tableofcontents

\section{Introduction}

The celebrated theorem of Tur\'an~\cite{Tur41} states that, for every $r \ge 2$ and all $n$, the unique largest $K_{r+1}$-free subgraph of the complete graph $K_n$ is its largest $r$-partite subgraph.
The starting point of our work is the following generalisation of this result, proved by Simonovits~\cite{Sim68}.  We call a graph $H$ \emph{edge-critical} if $\chi(H \setminus e) < \chi(H)$ for some $e \in H$.

\begin{thm}[\cite{Sim68}]\label{thm:Simonovits}
  If a graph $H$ is edge-critical and $n$ is a large enough integer, then every largest $H$-free subgraph of $K_n$ is $(\chi(H) - 1)$-partite.
\end{thm}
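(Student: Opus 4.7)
Set $r = \chi(H) - 1 \ge 2$ and let $G$ be a largest $H$-free subgraph of $K_n$. Since the Turán graph $T_r(n)$ is $r$-chromatic and $H$ has chromatic number $r+1$, $T_r(n)$ is $H$-free, so $e(G) \ge e(T_r(n)) = (1 - 1/r + o(1))\binom{n}{2}$; the Erd\H{o}s--Stone--Simonovits theorem matches this up to $o(n^2)$. The plan follows the classical stability approach.

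\emph{Step 1 (stability and a max-cut partition).} I invoke the Erd\H{o}s--Simonovits stability theorem: for any $\eta > 0$, if $n$ is large enough then $G$ can be made $r$-partite by deleting at most $\eta n^2$ edges. Among all $r$-partitions $V(G) = V_1 \cup \cdots \cup V_r$, I fix one maximising the number of crossing edges. Maximality forces $|N_G(v) \cap V_i| \le |N_G(v) \cap V_j|$ for every $v \in V_i$ and $j \ne i$; combined with $e(G) \ge e(T_r(n))$ and a routine averaging argument, this shows that, apart from a set $Z$ of $o(n)$ \emph{atypical} vertices, every $v \in V_i \setminus Z$ satisfies $|N_G(v) \cap V_j| \ge (1/r - \delta)n$ for all $j$, with $\delta = \delta(\eta) \to 0$.

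\emph{Step 2 (edge-critical embedding).} Using that $H$ is edge-critical, pick $e = xy \in E(H)$ with $\chi(H-e) = r$. Since $\chi(H) = r+1$, in every $r$-colouring of $H-e$ the endpoints $x$ and $y$ must lie in the same colour class; fix such a colouring with classes $C_1, \ldots, C_r$ and $x, y \in C_1$. I claim that $G$ contains no edge $uv$ with $u, v \in V_1 \setminus Z$: were such an edge to exist, I would embed $H$ into $G$ by sending $x \mapsto u$, $y \mapsto v$ and then embedding the remaining vertices of $H - e$ one at a time into the sets $V_j \setminus Z$. Since every typical vertex has at least $(1/r - \delta)n$ neighbours in every other part, the next vertex $z \in C_j$ to be embedded, together with its (boundedly many) already-placed neighbours, has a common neighbourhood of size $\Omega(n)$ in $V_j \setminus Z$, so a greedy embedding succeeds and yields a copy of $H$ in $G$, contradicting $H$-freeness.

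\emph{Step 3 (handling $Z$ and conclusion).} To rule out internal edges incident to $Z$, I would reassign each $v \in Z$ to the part $V_{f(v)}$ minimising $|N_G(v) \cap (V_j \setminus Z)|$ over $j$. Using the extremality of $G$ (which forces $d_G(v) \ge (1 - 1/r)n - o(n)$ via a cloning-style modification) and the max-cut property, one verifies that even atypical $v$ retain $\Omega(n)$ neighbours in each part other than $V_{f(v)}$, so Step 2's embedding applies uniformly and rules out every internal edge of the new partition. Thus $G$ is $r$-partite, so $G = T_r(n)$. The main obstacle is Step 2: its feasibility hinges on edge-criticality, which is precisely the condition forcing the endpoints $x, y$ of the critical edge to share an $r$-colour class of $H - e$, thereby allowing the prescribed internal edge $uv$ to be placed inside a single part of the partition; without edge-criticality this pairing does not exist and the argument collapses.
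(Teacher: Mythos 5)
Your Steps 1 and 2 are sound and follow the classical stability route, but Step 3 --- the only place where the real difficulty of the theorem lives --- contains two genuine gaps. First, the ``cloning-style modification'' does not establish the minimum-degree bound: duplicating a vertex preserves $K_{r+1}$-freeness but not $H$-freeness for general $H$. If $u'$ is a non-adjacent twin of $u$, a copy of $H$ in $G+u'$ using both $u$ and $u'$ corresponds only to a copy of $H$ with two non-adjacent vertices identified inside $G$, which an $H$-free graph may well contain (e.g.\ duplicating the middle vertex of a path on three vertices creates a $C_4$). The one-step deletion argument also fails, since it compares against $\mathrm{ex}(n-1,H)\le e(T_r(n-1))+o(n^2)$, an error term that swallows the $\Theta(n)$ gain; one needs either an iterative deletion (which yields the degree bound only for an induced subgraph, reintroducing the problem of the discarded vertices) or Simonovits's progressive induction. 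Second, and independently of how the degree bound is obtained, knowing that an atypical $v$ retains $\Omega(n)$ neighbours in each part other than $V_{f(v)}$ is \emph{not} enough to rerun the Step 2 embedding: the greedy argument needs every already-placed vertex to be non-adjacent to at most a $(1/v_H)$-fraction, say, of each target part, so that the common neighbourhood of up to $v_H$ placed vertices is nonempty. Two vertices each with only $cn$ neighbours in $V_j$ for $c<1/2$ can have empty common neighbourhood there, and an internal edge of the reassigned partition may even join two atypical vertices. As written, internal edges meeting $Z$ are therefore not ruled out.

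For comparison, the paper does not prove this statement by stability at all: it cites Simonovits, and its appendix derives the stronger \autoref{thm:AlonShapiraSudakov} (of which \autoref{thm:Simonovits} is the case $G=K_n$) from the Andr\'asfai--Erd\H{o}s--S\'os-type \autoref{thm:edge-critical-min-degree}, by iteratively deleting vertices of degree at most $\frac{3r-4}{3r-1}\cdot k$, applying that theorem to the surviving core to conclude it is $r$-partite, and then using an edge count against the largest $r$-partite subgraph to show that in fact no vertex was deleted. That route sidesteps both the minimum-degree issue and the exceptional-vertex issue, and is probably the cleanest way to repair your Step 3.
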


We will say that a graph $G$ is \emph{$H$-Simonovits} if every largest $H$-free subgraph of $G$ is $(\chi(H)-1)$-partite, so that~\autoref{thm:Simonovits} can be concisely restated as: `If $H$ is edge-critical, then every sufficiently large complete graph is $H$-Simonovits.'
On the other hand, observe that when $H$ is not edge-critical, no graph with chromatic number at least $\chi(H)$ can be $H$-Simonovits; indeed, adding one edge to a $(\chi(H)-1)$-partite graph cannot introduce a copy of $H$ unless $H$ is edge-critical.

In this work, we study the following question, which, to the best of our knowledge, was first explicitly considered by Babai, Simonovits, and Spencer~\cite{BabSimSpe90}.

\begin{quest}
  \label{quest:BSS}
  Suppose that $H$ is an edge-critical graph.  For what $p$ is the binomial random graph $G_{n,p}$ a.a.s.\ $H$-Simonovits?
\end{quest}

The main result of~\cite{BabSimSpe90} was that, for every $\ell \ge 1$, the random graph $G_{n,p}$ is typically $C_{2\ell+1}$-Simonovits as long as $p \ge 1/2 - \eps_\ell$ for some (small) positive constant $\eps_\ell$ that depends only on $\ell$.
Answering a challenge raised by the authors of~\cite{BabSimSpe90}, Brightwell, Panagiotou, and Steger~\cite{BriPanSte12} proved that $G_{n,p}$ is a.a.s.\ $K_{r+1}$-Simonovits, for every $r \ge 2$, already when $p \ge n^{-c_r}$ for some (small) positive constant $c_r$.

What are the necessary conditions on $p$ in \autoref{quest:BSS}?
A standard deletion argument shows that, when $p \gg n^{-2}$ and $n^{v_F} p^{e_F} \ll n^2p$ for some nonempty subgraph $F \subseteq H$, the random graph $G_{n,p}$ a.a.s.\ contains an $H$-free subgraph with $(1-o(1))\binom{n}{2}p$ edges; on the other hand, if $p \gg n^{-1}$, then no $r$-partite subgraph of $G_{n,p}$ can have more than $(1-1/r+o(1))\binom{n}{2}p$ edges.
This simple reasoning shows that, in order for $G_{n,p}$ to be a.a.s.\ $H$-Simonovits, one has to assume that $p = \Omega(n^{-1/m_2(H)})$, where
\begin{equation}
  \label{eq:2-density}
  m_2(H) \coloneqq \max\left\{ \frac{e_F - 1}{v_F - 2} \colon F \subseteq H,\ e_F \ge 2\right\}
\end{equation}
is the so-called \emph{$2$-density} of $H$.

On the other hand, when $p \gg n^{-1/m_2(H)}$, then $G_{n,p}$ typically becomes \emph{approximately} $H$-Simonovits, even when $H$ is not edge-critical.  This was first conjectured by Kohayakawa, {\L}uczak, and R\"odl~\cite{KohLucRod97} and later proved in the breakthrough work of Conlon and Gowers~\cite{ConGow16}, under the technical assumption that $H$ is strictly $2$-balanced (see below), which was later removed by the second author~\cite{Sam14}, using an adaptation of the argument of Schacht~\cite{Sch16}.

\begin{thm}[\cite{ConGow16,Sam14}]
  \label{thm:Conlon-Gowers}
  For every nonbipartite graph $H$ and every $\beta > 0$, there exist a~positive $C$ such that, when $p \ge Cn^{-1/m_2(H)}$, then a.a.s.\ every largest $H$-free subgraph of $G_{n,p}$ can be made $(\chi(H) - 1)$-partite by removal of at most $\beta n^2p$ edges.
\end{thm}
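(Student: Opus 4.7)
The plan is to reduce the claim to a sparse \emph{stability} statement and then to prove the latter via a refined hypergraph container argument. Write $r = \chi(H) - 1 \ge 2$. A Chernoff estimate shows that, a.a.s., the largest $r$-partite subgraph of $G_{n,p}$ has at least $(1 - 1/r - o(1))\binom{n}{2}p$ edges; since this subgraph is itself $H$-free, any largest $H$-free $F \subseteq G_{n,p}$ satisfies $e(F) \ge (1 - 1/r - o(1))\binom{n}{2}p$ as well. It therefore suffices to establish the following sparse stability statement: for every $\beta > 0$, there exist $\alpha, K > 0$ such that, when $p \ge K n^{-1/m_2(H)}$, a.a.s.\ every $H$-free $F \subseteq G_{n,p}$ with $e(F) \ge (1 - 1/r - \alpha)\binom{n}{2}p$ can be made $r$-partite by removing at most $\beta n^2 p$ edges.

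To prove this stability statement, I would apply a stability-enhanced hypergraph container theorem for $H$-free graphs. For every $\gamma > 0$, this enhancement produces a family $\cC$ of graphs on $[n]$ with the following properties: (i) every $H$-free graph on $[n]$ is a subgraph of some $C \in \cC$; (ii) $\log|\cC| = O(n^{2 - 1/m_2(H)})$; and (iii) each $C \in \cC$ admits a decomposition $C = C^{\mathrm{par}} \cup C^{\mathrm{bad}}$, where $C^{\mathrm{par}}$ is $r$-partite and $e(C^{\mathrm{bad}}) \le \gamma n^2$. Property (iii) would be obtained by iterating the Balogh--Morris--Samotij/Saxton--Thomason container construction and invoking the dense Erd\H{o}s--Simonovits stability theorem on its leaves to certify that each dense $H$-free graph lies near an $r$-partite structure.

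Given such a family, the probabilistic step is a union bound. Fix $\gamma = \beta/4$. For each container $C \in \cC$, a Chernoff bound yields
\[
  \Pr\!\left[ |E(C^{\mathrm{bad}}) \cap E(G_{n,p})| > \beta n^2 p \right] \le \exp(-c_\beta n^2 p)
\]
for a positive constant $c_\beta$ depending on $\beta$ alone. The hypothesis $p \ge K n^{-1/m_2(H)}$ gives $c_\beta n^2 p \ge 2 \log|\cC|$ once $K$ is taken large in terms of $\beta$, so a.a.s.\ every $C \in \cC$ simultaneously satisfies $|E(C^{\mathrm{bad}}) \cap E(G_{n,p})| \le \beta n^2 p$. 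Since every largest $H$-free $F \subseteq G_{n,p}$ lies inside some container $C$, removing the at most $\beta n^2 p$ edges of $F$ that fall in $C^{\mathrm{bad}}$ leaves a subgraph of the $r$-partite graph $C^{\mathrm{par}}$, as desired.

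The principal obstacle is securing property~(ii) --- a truly linear entropy bound in $n^{2-1/m_2(H)}$, \emph{without} the extra $\log n$ factor that the off-the-shelf Saxton--Thomason container theorem entails --- while simultaneously enforcing the stability property~(iii). This log factor is consequential because the target threshold is $p \ge K n^{-1/m_2(H)}$ with constant $K$, at which $n^2 p$ is only $\Theta(n^{2-1/m_2(H)})$. Removing the log at precisely this threshold is the content of the K{\L}R-type machinery: it is achieved in \cite{ConGow16} via sparse hypergraph regularity together with an associated embedding theorem, and in \cite{Sam14} via an adaptation of Schacht's sparsification method \cite{Sch16}, which replaces the entropy accounting by a Janson-inequality-based large-deviation estimate that decays at exactly the right rate when $p = \Theta(n^{-1/m_2(H)})$.
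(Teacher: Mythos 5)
The statement you are proving is treated in the paper as a black box: it is cited from \cite{ConGow16} and \cite{Sam14}, and no proof is given in the text. So there is no ``paper's own proof'' to compare against; the question is whether your outline would succeed as a proof.

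Your reduction to the sparse stability statement is correct and standard, and the final union bound over containers is sound \emph{provided} a container family with the stated properties exists. The gap is precisely where you flag it, but it is worth being precise about why it is not merely an inconvenience. A single application of the Balogh--Morris--Samotij / Saxton--Thomason container lemma, iterated to drive the container sizes down to $(1+o(1))\mathrm{ex}(n,H)$ and then combined with dense Erd\H{o}s--Simonovits stability, yields exactly the decomposition $C = C^{\mathrm{par}} \cup C^{\mathrm{bad}}$ in your property~(iii) --- this is the route of Balogh--Morris--Samotij --- but at the cost of $\log|\cC| = \Theta(n^{2-1/m_2(H)}\log n)$. At the target density $p = K n^{-1/m_2(H)}$, the Chernoff exponent is $c_\beta n^2 p = c_\beta K\, n^{2-1/m_2(H)}$, which is smaller than $\log|\cC|$ by a factor of $\log n$ no matter how large $K$ is. So the union bound genuinely fails at this threshold; the container route, as stated, proves the theorem only under the stronger hypothesis $p \ge K n^{-1/m_2(H)}\log n$. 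You cannot simply assert the existence of a log-free, stability-respecting container family: that strengthening is neither in Balogh--Morris--Samotij nor in Saxton--Thomason, and while refined container lemmas that shave a log exist for some problems (e.g.\ the Balogh--Samotij efficient container lemma), they are not in the form your property~(ii) requires and postdate \cite{ConGow16,Sam14}.

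This is exactly why the cited proofs do not go through containers at all. Conlon--Gowers proves the K{\L}R-type embedding/counting statements via a transference principle built on a sparse weak regularity lemma and a sparse counting lemma; Samotij (and Schacht) runs a multi-round exposure argument that replaces entropy counting by direct large-deviation estimates, which decay at exactly the right rate at $p = \Theta(n^{-1/m_2(H)})$. Your final paragraph correctly names these as the content of the result, but it also makes clear that the container argument you propose is not self-contained: the central property~(ii) is not established, and without it the whole union bound collapses. As it stands the proposal is a correct strategic sketch with the hardest step deferred to the very references the theorem credits.
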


Unfortunately, the methods of proof of \autoref{thm:Conlon-Gowers} are not sufficiently accurate for addressing the more delicate \autoref{quest:BSS}; in particular, they are insensitive to the assumption that the forbidden subgraph $H$ is edge-critical.  Only in subsequent tour de force work, DeMarco and Kahn showed that adding an extra polylogarithmic factor in the lower-bound assumption on $p$ suffices for $G_{n,p}$ to a.a.s.\ become \emph{exactly} $H$-Simonovits in the case where $H$ is a clique, first in the case $H = K_3$~\cite{DeMKah15Man} (where the corresponding approximate statement had been proved already in~\cite{KohLucRod97}) and then $H = K_{r+1}$ for all $r \ge 2$~\cite{DeMKah15Tur}.

\begin{thm}[\cite{DeMKah15Man, DeMKah15Tur}]
  \label{thm:DeMarco-Kahn}
  For every integer $r \ge 2$, there is a constant $C_r$ such that, if
  \[
    p \ge C_r n^{-\frac{1}{m_2(K_{r+1})}} (\log n)^{\frac{1}{e(K_{r+1}) - 1}},
  \]
  then $G_{n,p}$ is a.a.s. $K_{r+1}$-Simonovits.
\end{thm}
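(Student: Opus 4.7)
The plan is to argue by contradiction: fix a largest $K_{r+1}$-free subgraph $G$ of $\Gamma = G_{n,p}$ and assume that $G$ is not $r$-partite. Since the hypothesis on $p$ is stronger than the assumption of \autoref{thm:Conlon-Gowers}, $G$ can be made $r$-partite by removing $o(n^2 p)$ edges. Among all pairs $(G, \cP)$, where $G$ is a largest $K_{r+1}$-free subgraph of $\Gamma$ and $\cP = (V_1, \ldots, V_r)$ is an $r$-partition of $V(G)$, I would choose one that minimises the number of \emph{bad edges} (edges of $G$ lying inside some part of $\cP$). By stability this minimum is $o(n^2p)$; by construction $\cP$ is \emph{locally optimal}, so that each vertex $u \in V_i$ satisfies $|N_G(u) \cap V_i| \le |N_G(u) \cap V_j|$ for every $j \neq i$. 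The goal is to prove that in this optimal configuration there are no bad edges at all.

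Two structural properties of the pair $(G,\cP)$ will drive the rest of the argument. First, the \emph{maximality} of $G$ means that for every non-edge $xy \in \Gamma \setminus G$ there is a copy of $K_{r-1}$ in $N_G(x) \cap N_G(y)$; call such a clique a \emph{booster} for $xy$. Second, the \emph{minimality} of the number of bad edges means that, for every bad edge $uv \in V_1 \times V_1$ and every $w \in V_j$ with $j \neq 1$ and $uw \in \Gamma \setminus G$, the swap $G - uv + uw$ must contain a copy of $K_{r+1}$; otherwise it would be a largest $K_{r+1}$-free subgraph with strictly fewer bad edges relative to $\cP$. Since the copy of $K_{r+1}$ has to use the new edge $uw$, this forces a booster for $uw$ that avoids $v$, a subtly stronger constraint than mere maximality. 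An identical conclusion holds with the roles of $u$ and $v$ swapped. Thus each bad edge comes bundled with a rigid `bouquet' of $K_r$'s passing through $u$ and through $v$ in the cross-part neighbourhoods, and this bouquet has to appear already in $\Gamma$.

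To convert this rigidity into a contradiction I would perform a union bound over all candidate bad edges $uv$ and all possible realisations of the booster bouquets attached to them. The local optimality of $\cP$ combined with the lower bound $p \ge C_r n^{-1/m_2(K_{r+1})} (\log n)^{1/(e(K_{r+1})-1)}$ guarantees that each potential bad edge would require roughly $\log n$ essentially disjoint boosters on each side (this is what the polylogarithmic factor buys us). A careful exposure argument that reveals the edges of these boosters one at a time, while tracking the conditional probabilities of the remaining edges appearing in $\Gamma$, should then show that the expected number of bad edges carrying a full bouquet is $o(1)$. Hence a.a.s.\ no such edge exists and $G$ is $r$-partite.

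The main obstacle is executing this union bound tightly. The polylogarithmic exponent $1/(e(K_{r+1})-1)$ is calibrated exactly so that a single booster on a fixed edge appears with just enough slack to beat the $\Theta(n^2 p)$ candidate locations for a bad edge; anything smaller would fail. Making this analysis work requires pinning down precisely which $K_{r-1}$-structures around $u$ and $v$ are forced by the swap condition, disentangling the heavy dependencies between boosters attached to different vertices $w$, and, for $r \ge 3$, organising the exposure inductively to handle the richer configurations of nested cliques. This delicate counting, rather than the structural reduction itself, is where the heart of the DeMarco--Kahn argument lies.
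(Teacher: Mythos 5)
There is a genuine gap, and it sits in the structural reduction rather than in the counting. Your swap $G-uv+uw$ only yields information when there exists a \emph{missing external} edge $uw\in\Gamma\setminus G$ at an endpoint of the bad edge. But comparing $e(G)$ with the size of a maximum cut of $\Gamma$ shows that the total number of missing external edges is at most the number of bad edges (plus the deficit of your partition in $\Gamma$), i.e.\ $o(n^2p)$ overall; so the endpoints of a typical bad edge may well have \emph{no} missing external edges, and your ``bouquet of boosters'' simply does not exist there. The true obstruction is not local to $u$ and $v$: for a crossing copy of $K_{r+1}$ through $uv$ (one vertex $w_j$ in each other part), the edge that $G$ must omit is typically one of the $w_iw_j$'s, far from $u$ and $v$, and these omitted edges can be shared among the copies through many different bad edges. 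This is why the argument has to be run globally: one shows that the internal graph $Q=F\cap\int(\Pi)$ supports a \emph{matching} of $\Pi$-crossing copies of $K_{r+1}$ in $\Gamma$ of size exceeding $e(F\cap\int(\Pi))-\deficit(\Pi;\Gamma)$, since each copy in the matching forces a distinct missing external edge; that is the contradiction, not an edge-by-edge lock.

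Two further points your union bound does not address. First, the event ``$uv$ is a bad edge of some largest $K_{r+1}$-free $G$ and is locked by a bouquet'' is not determined by a bounded local structure in $\Gamma$ — it quantifies over the unknown subgraph $G$ and partition $\cP$ — so one cannot union-bound over $(uv,\text{bouquet})$ pairs; the paper (following DeMarco--Kahn) instead fixes a low-complexity witness $Q\subseteq F$, applies a Janson-type lower-tail bound for matchings to $\cF_Q[\Gamma\cap\ext(\Pi)]$, and pays for the union bound over $Q$ with the factor $p^{e(Q)}$. Second, your partition is chosen to be optimal for $G$, not for $\Gamma$, so its deficit in $\Gamma$ can be positive and eats into the count of missing external edges; controlling all cuts of small deficit (without a union bound over cuts, which is unaffordable) is exactly what the rigidity/core notion, the Harris-type correlation inequality, and the switching algorithm of \autoref{sec:rigidity-correlation} and \autoref{sec:algorithm} are for. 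None of these has an analogue in your sketch, and the ``careful exposure argument'' would have to reinvent all of them.
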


A key feature of \autoref{thm:DeMarco-Kahn} is that the lower-bound assumption on $p$ is best-possible, up to a multiplicative constant factor.
This is because, when $n^{-1} \ll p \le c_r n^{-1/m_2(K_{r+1})} (\log n)^{1/(e(K_{r+1})-1)}$ for a sufficiently small positive constant $c_r$, one can a.a.s.\ find in $G_{n,p}$ a subgraph $G$ with chromatic number larger than $r$ such that no edge of $G$ belongs to a copy of $K_{r+1}$ in $G_{n,p}$.
(This was first observed in~\cite{BriPanSte12} in the case $r=2$ and the subgraph being a copy of the $5$-cycle.)
In particular, as every maximal $K_{r+1}$-free subgraph of $G_{n,p}$ must contain all edges that do not belong to a copy of $K_{r+1}$, this means that no largest $K_{r+1}$-free subgraph of $G_{n,p}$ is $r$-partite.

The main result of our work is a generalisation of \autoref{thm:DeMarco-Kahn} from cliques to arbitrary edge-critical graphs that are strictly $2$-balanced.
A graph $H$ is called \emph{strictly $2$-balanced} if the maximum in the definition of the $2$-density $m_2(H)$, given in~\eqref{eq:2-density}, is achieved uniquely at $F = H$.
It is not difficult to check that all cliques, as well as all cycles, are strictly $2$-balanced, so our result is indeed a generalisation of \autoref{thm:DeMarco-Kahn}, as well as the results of Babai, Simonovits, and Spencer~\cite{BabSimSpe90}.

Our lower-bound assumption on $p$ involves an explicit constant that we now define.
Given integers $m$ and $r$, denote by $K_r(m)$ the $m$-blowup of $K_r$, that is, the balanced, complete $r$-partite graph with $rm$ vertices and denote by $K_r(m)^+$ the graph obtained from $K_r(m)$ by adding to it an arbitrary edge (contained in one of the $r$ colour classes).
Suppose now that $H$ is an edge-critical graph and note that $H \subseteq K_{\chi(H)-1}(m)^+$ for all $m \ge v_H$.
Denoting the number of copies of $H$ in a graph $G$ by $N(H, G)$, we let
\begin{equation}
  \label{eq:pi_H}
  \pi_H \coloneqq \lim_{m \to \infty} \frac{N\big(H, K_{\chi(H)-1}(m)^+\big)}{m^{v_H-2}} > 0
\end{equation}
and let $\theta_H$ be the constant satisfying
\begin{equation}
  \label{eq:theta_H}
  (\chi(H)-1)^{2-v_H} \pi_H \theta_H^{e_H-1} = 2 - \frac{1}{m_2(H)}.
\end{equation}
We are now ready to state the main result of this work.

\begin{thm}
  \label{thm:main}
  If $H$ is a nonbipartite, edge-critical, strictly $2$-balanced graph and
  \begin{equation}
    \label{eq:main-p-assumption}
    p \ge (\theta_H+\eps) \cdot n^{-\frac{1}{m_2(H)}} (\log n)^{\frac{1}{e_H - 1}},
  \end{equation}
  for some positive constant $\eps$, then $G_{n,p}$ is a.a.s.\ $H$-Simonovits.
\end{thm}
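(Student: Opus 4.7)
\begin{plan}
Write $r := \chi(H) - 1$ and let $G$ be a largest $H$-free subgraph of $G_{n,p}$. The plan combines a stability reduction with a careful double-swap. First, apply \autoref{thm:Conlon-Gowers} with a sufficiently small $\beta > 0$ (chosen in terms of $\eps$ and $H$) to obtain a.a.s.\ a partition $\cP = \{V_1, \dotsc, V_r\}$ of $[n]$ such that the set $B$ of edges of $G$ lying inside parts of $\cP$ has size at most $\beta n^2 p$. Take $\cP$ to be the max $r$-cut partition of $G$, so that local optimality holds at every vertex; standard concentration then gives $|V_i| = (1+o(1))n/r$ and the expected degrees from each vertex into every part. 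The goal is to show $B = \emptyset$.

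For a balanced $r$-partition $\cP'$ and a pair $\{u,v\}$ inside a part of $\cP'$, let $X(uv,\cP')$ count copies of $H$ in $K_n$ in which $uv$ plays the role of the extra edge in a $\cP'$-aligned $K_r(m)^+$-embedding. By~\eqref{eq:pi_H}, $X(uv,\cP') = (1 + o(1))\,\pi_H (n/r)^{v_H-2}$, and by~\eqref{eq:theta_H} and~\eqref{eq:main-p-assumption} the expected number of such copies fully present in $G_{n,p}$ (apart from $uv$ itself) is at least $(2 - 1/m_2(H) + \delta)\log n$ for some $\delta = \delta(\eps) > 0$. Strict $2$-balancedness of $H$ keeps the clustering term in Janson's inequality subdominant, yielding a bound of order $n^{-2 + 1/m_2(H) - \delta}$ for the probability that a given pair $uv$ lies in no aligned $H$-copy in $G_{n,p}$. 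Union-bounding over the $\asymp n^2 p$ edges of $G_{n,p}$---which is exactly the factor the $n^{-\delta}$ gain absorbs, since $n^2 p \asymp n^{2 - 1/m_2(H)}(\log n)^{1/(e_H-1)}$---delivers a.a.s.\ the \emph{covering property}: every edge of $G_{n,p}$ lies in many copies of $H$ in $G_{n,p}$ aligned with $\cP$, and in particular every bad edge of $G$ does.

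Supposing for contradiction that $B \ne \emptyset$, pick $e \in B$. Each aligned $H$-copy through $e$ in $G_{n,p}$ uses at least one edge $f \in M := E(G_{n,p}) \setminus E(G)$, necessarily a crossing edge of $\cP$, because $G$ itself is $H$-free. From the surplus of witnesses granted by the covering property one extracts two distinct missing edges $f_1, f_2 \in M$ together with aligned copies $\cH_1, \cH_2$ through $e$ satisfying the two cleanness conditions (i) every copy of $H$ in $G \cup \{f_i\}$ containing $f_i$ also contains $e$, and (ii) no copy of $H$ in $G \cup \{f_1, f_2\}$ uses both $f_1$ and $f_2$ while avoiding $e$. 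Conditions (i) and (ii) fail only when $G_{n,p}$ contains certain small rooted dense subgraphs, whose absence a.a.s.\ follows from a further application of Janson's inequality under~\eqref{eq:main-p-assumption}. Given such $f_1, f_2$, the graph $(G \setminus \{e\}) \cup \{f_1, f_2\}$ is $H$-free and has $|E(G)| + 1$ edges, contradicting the maximality of $G$.

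The principal obstacle is the tightness of the threshold. The constant $\theta_H$ in~\eqref{eq:theta_H} is calibrated so that the expected count of aligned $H$-copies through any pair is precisely $(2 - 1/m_2(H))\log n$---the exact exponent at which the Janson bound and the union bound over the $\asymp n^2 p$ edges of $G_{n,p}$ balance. Achieving this precision requires sharp Janson estimates (hence the strict $2$-balanced assumption, which controls the clustering term), a careful treatment of the self-referential nature of the max $r$-cut partition (which itself depends on $G_{n,p}$), and simultaneous verification of cleanness conditions (i) and (ii) for the swap. The remaining ingredients---stability via \autoref{thm:Conlon-Gowers}, concentration of degrees, and the embedding count of $K_r(m)^+$---follow the DeMarco--Kahn template in a relatively standard manner.
\end{plan}
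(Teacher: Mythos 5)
Your overall frame (stability via \autoref{thm:Conlon-Gowers}, then exploiting that every internal edge of the extremal $F$ must be ``paid for'' by missing crossing edges of $G_{n,p}$) is the right starting point, and your calibration of $\theta_H$ against the Janson exponent $(2-1/m_2(H))\log n$ is exactly the computation that appears in the paper. But the core of your argument --- the single edge swap --- has a genuine gap. Suppose $e\in B$ and $f_1$ is a missing crossing edge lying on an aligned copy of $H$ through $e$. Since $F$ is a \emph{largest} $H$-free subgraph, it is maximal, so $F\cup f_1$ contains some copy $K'$ of $H$ through $f_1$; all that can be said about $K'$ is that it contains \emph{some} edge of $B$ (a copy of $H$ cannot be entirely crossing), not that it contains $e$. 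Your cleanness condition (i) therefore fails exactly when $K'$ runs through a different bad edge $e'\in B\setminus\{e\}$, and this is not a ``small rooted dense subgraph'' event in $G_{n,p}$ amenable to Janson plus a union bound: it depends on the adversarial choice of $F$ and on the up-to-$\beta n^2p$ other edges of $B$, over which no affordable union bound exists. When $|B|\ge 2$ the local swap simply need not be available, which is why the paper replaces it with a global accounting: it lower-bounds the \emph{matching number} of the hypergraph of $F$-supported, $\Pi_F$-crossing copies induced on $G\cap\ext(\Pi_F)$, showing it exceeds $e(F\cap\int(\Pi_F))-\deficit(\Pi_F;G)$, which contradicts $e(F)\ge b(G)$ in one stroke. (Note also that the correct comparison is against the best $r$-cut of $G_{n,p}$, not of $F$, whence the deficit term; and when the internal edges concentrate on few high-degree vertices, per-edge counting must be replaced by star-based counting, the paper's $\cQ_H$ case.)

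The second gap is the one you name but do not resolve: your covering property is proved by Janson for a \emph{fixed} balanced partition, with failure probability $n^{-(2-1/m_2(H))(1+\delta)+o(1)}$ per internal pair, but it is then applied to $\Pi_F$, which depends on $G_{n,p}$. A union bound over the $r^n$ balanced partitions costs a factor the Janson bound cannot absorb at this threshold, and there is no concentration statement that makes $\Pi_F$ ``essentially deterministic.'' This is precisely the central obstacle of the problem; the paper overcomes it with the DeMarco--Kahn rigidity notion, a Harris-type correlation inequality reducing to a fixed candidate core, and a delicate switching algorithm that transfers cuts of small deficit to rigid graphs at a controlled entropy cost. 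Without some substitute for this machinery, the proposal does not close.
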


It is not hard to check that the function $p_H \coloneqq \theta_H \cdot n^{-\frac{1}{m_2(H)}} (\log n)^{\frac{1}{e_H-1}}$ is the sharp threshold for the property that, for some \emph{fixed} equipartition $\Pi = \{V_1, \dotsc, V_{\chi(H)-1}\}$ of the vertices of $K_n$, every edge $e$ of $G_{n,p}$ whose both endpoints lie in the same $V_i$ extends to a copy of $H$ in $G_{n,p}$ whose all remaining edges have endpoints in different $V_i$s (i.e., $H \subseteq e \cup (G_{n,p} \cap \ext(\Pi))$ using the notation introduced below).
We strongly believe that our lower-bound assumption on $p$ is optimal.
Our belief contradicts the prediction made by DeMarco and Kahn~\cite{DeMKah15Tur}, who suggested that it is enough to assume that $p$ is above the threshold for the (weaker) property that every edge of $G_{n,p}$ extends to some copy of $H$ (without the additional restriction that the copy of $H$ needs to cross the fixed equipartition $\Pi$).

One of the key ingredients in our proof of \autoref{thm:main} requires the assumption that $p = 1-\Omega(1)$.
Luckily, in the complementary range $p = 1-o(1)$, \autoref{thm:main} is a straightforward consequence of the following result of Alon, Shapira, and Sudakov~\cite{AloShaSud09} and the fact that $\delta(G_{n,p}) \ge (p-o(1))n$ asymptotically almost surely.

\begin{thm}[\cite{AloShaSud09}]
  \label{thm:AlonShapiraSudakov}
  For every edge-critical graph $H$, there exists a positive constant $\mu$ such that every $n$-vertex graph with minimum degree at least $(1-\mu)n$ is $H$-Simonovits.
\end{thm}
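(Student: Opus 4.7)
Let $r = \chi(H) - 1$. Given an $n$-vertex graph $G$ with $\delta(G) \ge (1-\mu)n$, fix an arbitrary maximum $H$-free subgraph $G^* \subseteq G$; the goal is to show $G^*$ is $r$-partite. The approach is to combine Simonovits's stability theorem with a greedy embedding that exploits edge-criticality and the abundance of common neighbours guaranteed by high minimum degree.

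\emph{Stability and choice of partition.} Intersecting $G$ with a max-cut $r$-partition produces an $r$-partite, hence $H$-free, subgraph of $G$ with at least $(1 - 1/r - O(\mu))\binom{n}{2}$ edges, so $e(G^*) \ge (1 - 1/r - O(\mu))\binom{n}{2}$ and $G^*$ is within $o(n^2)$ edges of the Tur\'an bound. By Simonovits's stability theorem, there is an $r$-partition $\Pi = (U_1, \dots, U_r)$ of $V(G)$ such that the set $I$ of $G^*$-edges with both endpoints in the same part (``inside edges'') satisfies $|I| \le \eta n^2$, where $\eta = \eta(\mu) \to 0$ as $\mu \to 0$. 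Choose $\Pi$ among such partitions to minimise $|I|$; one deduces that the parts are balanced, $||U_i| - n/r| = o(n)$. Because $\delta(G) \ge (1-\mu)n$, every vertex has at least $|U_i| - \mu n = (1/r - O(\mu))n$ $G$-neighbours in every part $U_i$. Combining the minimality of $|I|$ (which prevents any vertex from being lopsidedly attached to its own part) with the high min-degree of $G$ shows that, for every vertex $v$ and every $U_i$ distinct from its own part, $v$ has at least $(1/r - o(1))n$ \emph{cross $G^*$-neighbours} in $U_i$.

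\emph{Embedding and contradiction.} Suppose, for contradiction, that $G^*$ contains an inside edge $uv \in U_1$. Edge-criticality supplies an edge $e^* = ab \in E(H)$ with $\chi(H \setminus e^*) = r$, together with a proper $r$-colouring $\phi: V(H) \to [r]$ of $H \setminus e^*$ satisfying $\phi(a) = \phi(b) = 1$. We build an embedding $\psi: V(H) \to V(G^*)$ by setting $\psi(a) = u$, $\psi(b) = v$, and placing each remaining $w \in V(H) \setminus \{a, b\}$ into $U_{\phi(w)}$ greedily. Because $\phi$ is a proper colouring of $H \setminus e^*$, every edge of $H$ other than $e^*$ must then be embedded as a $\Pi$-crossing edge; such an edge lies in $G^*$ provided $\psi(w)$ is chosen from the cross $G^*$-neighbourhood of all previously embedded vertices. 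By the previous step, at each step of the greedy procedure the pool of admissible choices has size at least $(1/r - o(1))n$, so the embedding yields $\Omega(n^{v_H - 2})$ copies of $H$ in $G^*$ using $uv$ as $e^*$. This contradicts the $H$-freeness of $G^*$, so $G^*$ has no inside edges and is therefore $r$-partite.

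\emph{Main obstacle.} The crux of the argument is the ``regularity of endpoints'' conclusion: ensuring that every vertex — and in particular the endpoints $u, v$ of a hypothetical inside edge — retains $(1/r - o(1))n$ cross $G^*$-neighbours in every other part. A single low-$G^*$-degree vertex could derail the greedy count, so this step must rule out such vertices. It is here that the minimality of $|I|$ (via local switching arguments: moving a poorly-attached vertex to a better-suited part would decrease $|I|$) and the hypothesis $\delta(G) \ge (1-\mu)n$ (which bounds the number of $G$-edges missing at any vertex) must be used in tandem. Once this structural fact is in hand, the embedding step is routine counting, provided $\mu$ — and hence $\eta$ — is chosen small enough relative to $v_H$.
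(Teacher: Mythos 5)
Your approach is genuinely different from the paper's: the paper avoids stability and embedding altogether, instead invoking the Andr\'asfai--Erd\H{o}s--S\'os / Erd\H{o}s--Simonovits minimum-degree theorem (every $n$-vertex $H$-free graph with $\delta > \frac{3r-4}{3r-1}n$ is $r$-partite) and then, starting from a largest $H$-free subgraph $F$, iteratively deleting low-degree vertices and using a counting comparison (via a simple lemma on extending $r$-partite subgraphs) to show that in fact no vertex is deleted, so $F$ itself has high minimum degree and is therefore $r$-partite by that theorem. No stability theorem, no supersaturated embedding.

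There is, however, a genuine gap in your proposal that sits exactly where you flagged the ``main obstacle.'' The greedy embedding needs the endpoints $u,v$ of the hypothetical inside edge to each have $(1/r - o(1))n$ cross-$G^*$-neighbours in every other part, and in particular it needs $|N_{G^*}(u) \cap N_{G^*}(v) \cap U_j| = \Omega(n)$ for the first vertex of $H$ you place. The two ingredients you cite do not deliver this: minimality of $|I|$ only yields the \emph{unfriendly partition} property (each vertex has no more $G^*$-neighbours in its own part than in any other), which is relative and is perfectly consistent with a vertex of total $G^*$-degree $O(1)$; and $\delta(G) \ge (1-\mu)n$ bounds $G$-degrees, not $G^*$-degrees, and a maximum $H$-free subgraph can a priori discard almost all edges at a given vertex. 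Zykov-type symmetrization, which rescues this for $H = K_{r+1}$, fails for general $H$ because a copy of $H$ may use both $u$ and $v$ as a non-adjacent pair. So you need a substantive argument that a maximum $H$-free subgraph of a graph of minimum degree $(1-\mu)n$ itself has minimum degree bounded below by a constant times $n$ --- this is precisely the content of the paper's iterative-deletion-plus-counting step, and without it the embedding count collapses. If you supply such a minimum-degree lower bound for $G^*$, then in fact you may as well invoke the Erd\H{o}s--Simonovits theorem directly (as the paper does), which makes the stability-plus-embedding scaffolding unnecessary.
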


We remark that~\cite[Theorem~6.1]{AloShaSud09} proves only the marginally weaker assertion that, in every $n$-vertex graph with minimum degree at least $(1-\mu)n$, the largest size of an $H$-free subgraph equals the largest size of an $r$-partite subgraph.  However, a minor adjustment of the proof of~\cite[Theorem~6.1]{AloShaSud09} yields the stronger \autoref{thm:AlonShapiraSudakov}.  For the sake of completeness, we include this modified argument in \autoref{sec:proof-AlonShapiraSudakov}.

\subsection{Related work}
\label{sec:related-work}

There is a closely related body of work concerning the structure of a random $H$-free graph (as opposed to the structure of largest subgraphs of a random graph).
This problem was first considered by Erd\H{o}s, Kleitman, and Rothschild~\cite{ErdKleRot76}, who proved that a random $K_3$-free graph is typically bipartite.
The analogous statement with $K_3$ replaced by $K_{r+1}$ for an arbitrary $r \ge 2$ (and bipartite with $r$-partite) was proved by Kolaitis, Pr\"omel, and Rothschild~\cite{KolProRot87}; this was further generalised by Pr\"omel and Steger~\cite{ProSte92}, who proved that a random $H$-free graph is typically $(\chi(H)-1)$-partite whenever $H$ is edge-critical.

In the past two decades, the results of~\cite{ErdKleRot76, KolProRot87, ProSte92} have been extended into the sparse regime, where interesting threshold phenomena emerge.
For a graph $H$ and integers $m, n$ satisfying $0 \le m \le \mathrm{ex}(n,H)$, let $\cF_{n,m}(H)$ denote the family of all $H$-free graphs with vertex set $\br{n}$ and precisely $m$ edges.
The following theorem combines the results of Osthus, Pr\"omel, and Taraz~\cite{OstProTar03} (the case where $H$ is an odd cycle) and Balogh, Morris, Samotij, and Warnke~\cite{BalMorSamWar16} (the case where $H$ is a clique).

\begin{thm}[\cite{BalMorSamWar16, OstProTar03}]
  \label{thm:OPT-BMSW}
  If $H$ is an odd cycle or a clique and $F_{n,m} \in \cF_{n,m}(H)$ is chosen uniformly at random, then
  \[
    \lim_{n \to \infty} \Pr\big(\text{$F_{n,m}$ is $(\chi(H)-1)$-partite}\big)
    =
    \begin{cases}
      1, & m \ge (1+\eps) m_H(n), \\
      0, & n \ll m \le (1-\eps) m_H(n),
    \end{cases}
  \]
  where $m_H(n) \coloneqq \theta_H' n^{-\frac{1}{m_2(H)}} (\log n)^{\frac{1}{e_H-1}}$ for some explicit constant $\theta_H' > 0$.
\end{thm}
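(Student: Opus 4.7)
The plan is to use the hypergraph container method of~\cite{BalMorSamWar16} together with a switching argument comparing $(\chi(H)-1)$-partite and non-$(\chi(H)-1)$-partite members of~$\cF_{n,m}(H)$. Write $r = \chi(H)-1$ and $\cF^*_{n,m}(H) \subseteq \cF_{n,m}(H)$ for the $r$-partite members. For $F \in \cF^*_{n,m}(H)$ with canonical $r$-partition~$\Pi$, let $X(F)$ count the pairs $(e,e')$ such that $e \notin F$ is internal to~$\Pi$, $e' \in F$ is crossing, and $F + e - e'$ is $H$-free and not $r$-partite. A double count of switches equates $\sum_F X(F)$ with an analogous sum over non-$r$-partite $F'$, so understanding the typical behaviour of~$X$ controls the ratio $|\cF_{n,m}(H) \setminus \cF^*_{n,m}(H)| / |\cF^*_{n,m}(H)|$.

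The central ingredient, handling both directions uniformly, is to estimate $X(F)$ for a \emph{typical} $r$-partite $F$ with $m$ edges. The number of $\Pi$-internal non-edges~$e$ such that $F \cup \{e\}$ contains no copy of~$H$ is, in expectation over a uniformly random $m$-edge subset of the crossing edges, roughly $n^2$ times the probability that none of the $\Theta(n^{v_H-2})$ copies of~$H$ through~$e$ that cross~$\Pi$ lies entirely in~$F$. This latter probability is controlled by $(\theta_H')^{e_H-1}$, which is essentially the constant $\theta_H$ of~\autoref{thm:main}. Above the threshold the expectation is~$o(1)$, so after a container-plus-stability reduction to $O(1)$ near-extremal partitions~$\Pi$, almost every $r$-partite $F$ admits no valid switch, yielding $|\cF_{n,m}(H) \setminus \cF^*_{n,m}(H)| = o(|\cF^*_{n,m}(H)|)$ and hence the $1$-statement. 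Below the threshold the expectation is $\omega(1)$ and, by a second-moment estimate, concentrated around its mean; this forces the typical $r$-partite $F$ to have many non-$r$-partite $H$-free neighbours in the switch graph, reversing the asymptotic and giving the $0$-statement.

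The main obstacle is pinning down the sharp constant~$\theta_H'$; pure container bounds give the threshold only up to a multiplicative factor. Obtaining the precise constant requires controlling, with second-moment accuracy, the probability that a random $m$-edge subset of the $\Pi$-crossing edges misses every copy of~$H$ through a fixed $\Pi$-internal edge~$e$, accounting for correlations between the many copies passing through~$e$. For odd cycles (as in~\cite{OstProTar03}) the tree-like structure of~$C_{2\ell+1}$ keeps this manageable by elementary counting; for cliques (as in~\cite{BalMorSamWar16}) this is the core technical content, requiring a Janson-type inequality applied to a judiciously chosen family of $H$-copies. The container step itself, reducing the problem to a bounded family of near-extremal partitions via the Erd\H{o}s--Simonovits stability theorem, is routine by comparison.
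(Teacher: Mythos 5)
The paper does not prove this statement: \autoref{thm:OPT-BMSW} is quoted as background from \cite{BalMorSamWar16, OstProTar03} in the related-work section, so there is no in-paper proof to compare your attempt against. Judged on its own, what you have written is a strategy outline rather than a proof. It is broadly consistent with the container-plus-counting approach of the cited works, but the entire quantitative core --- the Janson/second-moment analysis that pins down the sharp constant $\theta_H'$, which you yourself identify as ``the core technical content'' --- is deferred, so the argument cannot be assessed as correct or incorrect in its present form.

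Two concrete concerns with the plan as stated. First, your $0$-statement route (concentration of the switch count $X(F)$ for typical $r$-partite $F$) only bounds the out-degree of $r$-partite graphs in the switch graph; to conclude $|\cF_{n,m}(H)\setminus\cF^*_{n,m}(H)| \gg |\cF^*_{n,m}(H)|$ from the double count you also need an upper bound on the number of switches arriving at each non-$r$-partite graph, which you do not address. The standard and more robust route --- hinted at in this paper's own discussion after \autoref{thm:DeMarco-Kahn} --- is to show that below the threshold one can embed a subgraph of chromatic number greater than $\chi(H)-1$ (e.g.\ a $5$-cycle when $H=K_3$) none of whose edges lies in a copy of $H$, which directly forces typical members of $\cF_{n,m}(H)$ to be non-$(\chi(H)-1)$-partite. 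Second, for the $1$-statement your switching should compare $r$-partite graphs not with arbitrary non-$r$-partite ones but with the \emph{almost} $r$-partite graphs surviving the container/stability reduction, and the single-swap operation $F+e-e'$ is generally too coarse: graphs at symmetric-difference distance greater than $2$ from $r$-partiteness require iterated or multi-edge switchings, which is where most of the technical effort in \cite{BalMorSamWar16} lives.
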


Finally, we mention that Engelberg, Samotij, and Warnke~\cite{EngSamWar} have recently extended \autoref{thm:OPT-BMSW} to all edge-critical graphs; however, their result determines the threshold for the property that $F_{n,m}$ is $(\chi(H)-1)$-partite only up to a constant factor.

\subsection{Notation}
\label{sec:notation}

We briefly discuss the notation that will be used throughout the paper.
An \emph{$r$-cut} (or simply a \emph{cut}) in a graph is an ordered $r$-partition $\Pi = (V_1, \dotsc, V_r)$ of its vertex set.
We say that $\Pi$ is \emph{$\delta$-balanced} if each of its parts has size $(1 \pm \delta) \cdot n/r$, where $n$ is the number of vertices of the graph.
Following DeMarco and Kahn~\cite{DeMKah15Tur}, given a tuple $\Pi = (V_1, \dotsc, V_r)$ of pairwise-disjoint sets of vertices (not necessarily a cut), we will denote by $\ext(\Pi)$ the set of all pairs of vertices meeting two distinct $V_i$s (the `external' edges of $\Pi$) and by $\int(\Pi) \coloneqq \binom{V_1 \cup \dotsb \cup V_r}{2} \setminus \ext(\Pi)$ the set of all pairs of vertices that are contained in a single $V_i$ (the `internal' edges of $\Pi$).
For a graph $G$ and a family of cuts~$\cC$,
\[
  b_{\cC}(G) \coloneqq \max_{\Pi \in \cC} e(G \cap \ext(\Pi))
\]
is the largest number of edges of $G$ that cross some cut in $\cC$.
The \emph{deficit} of a cut $\Pi \in \cC$ in $G$ with respect to the family $\cC$ is the difference
\[
  \deficit_{\cC}(\Pi; G) \coloneqq b_{\cC}(G) - e(G \cap \ext(\Pi))
\]
between the number of edges of $G$ that cross $\Pi$ and the number of edges of $G$ that cross a largest cut from $\cC$. An \emph{$\br{r}$-coloured graph} is a graph whose vertices are coloured with colours from $\br{r}$; this colouring does not have to be proper.
For an $\br{r}$-coloured graph $Q$ and $k \in \br{r}$, we denote by $V^k(Q)$ the vertices of $Q$ with colour $k$. We say that an $r$-tuple $\Pi = (V_1, \dotsc, V_r)$ of pairwise disjoint sets of vertices (e.g., an $r$-cut) is \emph{compatible} with $Q$ if $V^k(Q) \subseteq V_k$ for every $k \in \br{r}$.

Suppose that $\cG$ is a hypergraph on a finite vertex set $V$.
Given a subset $W \subseteq V$ of vertices of $\cG$, we write $\cG[W]$ to denote the subhypergraph of $\cG$ induced by $W$, i.e., $\cG[W] \coloneqq \{A \in \cG : A \subseteq W\}$.
Further, $\nu(\cG)$ denotes the matching number of $\cG$, that is, the largest size of a collection of pairwise-disjoint edges of $\cG$.
Finally, given a vertex $e \in V$, the link hypergraph of $e$ (the neighbourhood of $e$) is the hypergraph
\[
  \partial_e\cG \coloneqq \{A \setminus \{e\} : e \in A \in \cG\};\footnote{This notation seems natural if one identifies $\cG$ with the polynomial $\sum_{A \in \cG} \prod_{e \in A} e$.}
\]
we further let $\partial \cG \coloneqq \bigcup_{e \in V} \partial_e\cG$.
Since we will often consider induced subhypergraphs in various link graphs, we use the convention that the operators $\partial_e$ and $\partial$ bind stronger than the operation of taking induced subhypergraphs, that is, $\partial \cG[W] = (\partial \cG)[W]$.

\section{Outline of the proof}
\label{section:outline}

Suppose that $H$ is an edge-critical and strictly 2-balanced graph with $\chi(H) = r + 1$, where $r \ge 2$.
Define
\[
  p_H \coloneqq \theta_H \cdot n^{-\frac{1}{m_2(H)}} (\log n)^{\frac{1}{e_H - 1}},
\]
let $\eps > 0$ be an arbitrary constant and suppose that $G \sim G_{n,p}$ for some $p \ge (1+\eps) p_H$.
We may clearly assume that $\eps < 1$ and, in light of~\autoref{thm:AlonShapiraSudakov}, that $p \le p_0$ for some $p_0 < 1$ that depends only on $H$.
We will also assume that $G$ satisfies a number of properties that are typical of graphs with density $p$, ranging from the simple requirement that all vertex degrees are $(1 \pm o(1))np$ to much more intricate conditions concerning the distribution of edges and copies of various subgraphs of $H$.
This will be made more precise in \autoref{sec:random-graphs}.

Let $F$ be a largest $H$-free subgraph of~$G$ and let $\Pi_F = (V_1, \dotsc, V_r)$ be an $r$-cut that maximises $e(\ext(\Pi_F) \cap F)$ and has the largest $e(F[V_1])$ among all such $r$-cuts;
clearly, we may choose such $\Pi_F$ for every $H$-free graph $F \subseteq K_n$ in some canonical way.
Note that we may assume that $e(F[V_1]) > 0$, as otherwise $F$ is $r$-partite and we have nothing left to prove.
On the other hand, in light of \autoref{thm:Conlon-Gowers}, we may assume that $e(F[V_1]) \le e(\int(\Pi_F) \cap F) \ll n^2 p$.

Since every $r$-partite subgraph of $G$ is trivially $H$-free, the following must be true for every family $\cC$ of $r$-cuts that includes $\Pi_F$:
\[
  e(G \cap \ext(\Pi_F)) + \deficit_{\cC}(\Pi_F;G) = b_{\cC}(G) \le e(F) = e(F \cap \ext(\Pi_F)) + e(F \cap \int(\Pi_F)).
\]
In particular, we will obtain the desired contradiction (to the assumption that $e(F[V_1]) > 0$) if we manage to show that, for some family $\cC$ of $r$-cuts that includes $\Pi_F$,
\begin{equation}
  \label{eq:contradiction-goal}
  e\big((G \setminus F) \cap \ext(\Pi_F)\big) > e(F \cap \int(\Pi_F)) - \deficit_{\cC}(\Pi_F;G).
\end{equation}
How can one bound $e\big((G \setminus F) \cap \ext(\Pi_F)\big)$ from below? The following definition is key.

\begin{dfn}
  Given an $r$-tuple $\Pi$ of pairwise-disjoint sets of vertices and a graph $Q$, we say that a copy $K$ of $H$ (in $K_n$) is \emph{$Q$-supported and $\Pi$-crossing} if $K \subseteq Q \cup \ext(\Pi)$.
\end{dfn}

The key point is that at least one edge of every copy of $H$ in $G$ that is $F$-supported and $\Pi_F$-crossing must belong to $(G \setminus F) \cap \ext(\Pi_F)$, as otherwise $F$ would contain a copy of $H$.
In particular, for every $Q \subseteq F$, the graph $(G \setminus F) \cap \ext(\Pi_F)$ must have at least as many edges as the largest size of a matching in the subhypergraph of
\[
  \cF_Q \coloneqq \{K \setminus Q : \text{$K$ is a copy of $H$ in $K_n$}\}
\]
that is induced by $G \cap \ext(\Pi_F)$, that is,
\[
  e\big((G \setminus F) \cap \ext(\Pi_F)\big) \ge \nu\big(\cF_Q[G \cap \ext(\Pi_F)]\big).
\]
Since~\eqref{eq:contradiction-goal} holds vacuously whenever $\deficit_\cC(\Pi_F; G) > e(F \cap \int(\Pi_F))$, it suffices to show that $\nu\big(\cF_Q[G \cap \ext(\Pi)]\big) > e(F \cap \int(\Pi_F))$ for \emph{some} $Q \subseteq F$ and \emph{every} cut $\Pi \in \cC$ satisfying $\deficit_{\cC}(\Pi; G) \le e(F \cap \int(\Pi_F))$.  This discussion naturally leads one to formulating the following plan for the proof of \autoref{thm:main}.

\begin{plan}
  We will construct
  \begin{itemize}
  \item
    a family $\cQ$ of $\br{r}$-coloured subgraphs of $K_n$ and
  \item
    sequences $(d_Q)_{Q \in \cQ}$ of integers and $(\cC_Q)_{Q \in \cQ}$ of $r$-cuts
  \end{itemize}
  such that $G$ has the following two properties asymptotically almost surely:
  \begin{enumerate}[label=(P\arabic*)]
  \item
    \label{item:plan-P1}
    Every largest $H$-free subgraph $F \subseteq G$ contains some $Q \in \cQ$ (coloured by $\Pi_F$) such that $d_Q \ge e(F \cap \int(\Pi_F))$ and $\Pi_F \in \cC_Q$.
  \item
    \label{item:plan-P2}
    For every $Q \in \cQ$ with $Q \subseteq G$ and every $r$-cut $\Pi \in \cC_Q$ with $\deficit_{\cC_Q}(\Pi;G) \le d_Q$, we have $\nu\big(\cF_Q[G \cap \ext(\Pi)]\big) > d_Q$.
  \end{enumerate}  
\end{plan}

\subsection{Constants}
\label{sec:constants}

There are a number of interdependent constants that will appear throughout our proof.
We have already introduced $\eps$ and $p_0$, which appear in the assumed bounds $(1+\eps)p_H \le p \le p_0$ on the density.
The remaining constants are:
\begin{itemize}
\item
  $\hat{C}$, $\Cth$, $\Cpar$, $\Clow$ : constants that depend only on $H$ that we use in place of complicated, explicit expressions;
\item
  $\alpha = \alpha(r, p_0, \eps)$ : the parameter from the definition of rigidity given in \autoref{sec:rigidity-correlation};
\item
  $\delta = \delta(r, p_0, \alpha)$ : the parameter from the definition of a balanced cut;
\item
  $Z = Z(r, p_0, \alpha)$ : the constant from the statement of \autoref{prop:algorithm-result};
\item
  $\kappa = \kappa(H, \hat{C}, \Cth, \Clow, \eps, \delta)$ : the parameter from the definition of the family $\cQ_L$ of low-degree graphs given in \autoref{sec:constructing-graphs-Q};
\item
  $\eta = \eta(H, \Cpar, Z, \eps, \kappa)$ : the parameter from the definition of the family $\cQ_H$ of high-degree graphs given in \autoref{sec:constructing-graphs-Q};
\item
  $\chigh = \chigh(H, \eta)$ : the constant from the statement of~\autoref{lemma:high-degree-bounds}.
\end{itemize}
Finally, we will also denote by $\beta = \beta(n) \ll 1$ the function that is implicit in the statement of \autoref{thm:Conlon-Gowers} invoked with $p \ge p_H \gg n^{-1/m_2(H)}$.
That is, asymptotically almost surely, for every largest $H$-free subgraph of $F \subseteq G$, we have $e(F \cap \int(\Pi_F)) \le \beta n^2p \ll n^2p$.

\subsection{Constructing the family $\cQ$}
\label{sec:constructing-graphs-Q}

In this section, we construct the family $\cQ$ and the sequences $(d_Q)_{Q \in \cQ}$ of integers and $(\cC_Q)_{Q \in \cQ}$ of $r$-cuts and show that they satisfy property~\ref{item:plan-P1} from the above plan.
We start by defining the notions of `low-degree' and `high-degree' graphs.

\begin{dfn}
  Denote by $\cQ_L$ the set of all graphs $Q \subseteq K_n$ with $\Delta(Q) \le \kappa np / \log n$ whose all vertices are coloured $1$.
  Further, denote by $\cQ_H$ the family of all graphs $Q \subseteq K_n$ that contain an independent set $X_Q \subseteq V^1(Q)$ of size $o(n)$ that dominates all edges of~$Q$ and such that every vertex $v \in X_Q$ has exactly $\eta np$ neighbours in each of the $r$ colour classes; we will refer to the vertices of $X_Q$ as the \emph{centre vertices} of $Q$ and denote their number by $k(Q)$.
  Finally, we let $\cQ \coloneqq \cQ_L \cup \cQ_H$.
\end{dfn}

The following two lemmas will help us choose a graph $Q \in \cQ_L \cup \cQ_H$ for every largest $H$-free subgraph $F \subseteq G$, provided that $G$ has some typical properties.  Given a graph $F$, a vertex $v \in V(F)$, and a subset $W$ of vertices of $F$, we will denote by $\deg_F(v, W)$ the number of neigbours of $v$ in $W$.

\begin{lemma}
  \label{lemma:Vizing}
  For every graph $I$ and integer $d \le \Delta(I)$, there is a subgraph $Q \subseteq I$ satisfying $\Delta(Q) = d$ and $e(Q) \ge \frac{d}{\Delta(I)+1} \cdot e(I)$.
\end{lemma}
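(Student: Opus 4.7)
The plan is to invoke Vizing's theorem to partition the edges of $I$ into $\Delta(I)+1$ matchings, retain the $d$ largest colour classes (which yields a subgraph of maximum degree at most $d$ containing the desired proportion of $e(I)$), and then, if necessary, enlarge the resulting subgraph greedily to push the maximum degree from `at most $d$' up to `exactly $d$'.

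More concretely, I would first apply Vizing's theorem to obtain a proper edge-colouring of $I$ with at most $\Delta(I)+1$ colours; let $M_1, \dotsc, M_{\Delta(I)+1}$ denote the colour classes, each of which is a matching. Since $\sum_i |M_i| = e(I)$, a straightforward averaging argument shows that the union $Q'$ of the $d$ largest among these matchings satisfies $e(Q') \ge \frac{d}{\Delta(I)+1} \cdot e(I)$, and since $Q'$ is a union of $d$ matchings we have $\Delta(Q') \le d$. If already $\Delta(Q') = d$, set $Q \coloneqq Q'$ and we are done; otherwise I would extend $Q'$ greedily inside $I$: as long as $\Delta(Q') \le d-1$, every edge $uv \in I \setminus Q'$ satisfies $\deg_{Q'}(u), \deg_{Q'}(v) \le d-1$, so adjoining $uv$ preserves the bound $\Delta \le d$. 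This process must terminate either when $Q' = I$ (in which case $\Delta(Q') = \Delta(I) \ge d$, forcing $\Delta(Q') = d$) or when $\Delta(Q') = d$ is first reached. In either case, taking $Q$ to be the final graph produces a subgraph of $I$ with $\Delta(Q) = d$ and $e(Q) \ge e(Q') \ge \frac{d}{\Delta(I)+1} \cdot e(I)$.

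There is no genuine obstacle here: the lemma is a clean consequence of Vizing's theorem. The only mild subtlety is promoting $\Delta(Q) \le d$ to equality, which is handled by the greedy completion step above and crucially uses the hypothesis $d \le \Delta(I)$.
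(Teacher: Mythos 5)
Your proof is correct and follows essentially the same route as the paper: apply Vizing's theorem, keep the $d$ largest colour classes to get $\Delta(Q) \le d$ and the edge-count lower bound, then add edges greedily to force $\Delta(Q) = d$ (using $d \le \Delta(I)$). No meaningful differences.
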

\begin{proof}
  Denote $\Delta(I)$ by $D$.
  By Vizing's theorem, we can find a proper edge-colouring $\varphi \colon E(I) \to \br{D+1}$;
  without loss of generality,
  \begin{equation}
    \label{eq:Vizing-property}
    |\varphi^{-1}(1)| \ge \dotsb \ge |\varphi^{-1}(D + 1)|.
  \end{equation}
  Let $Q \coloneqq \varphi^{-1}(1) \cup \dotsb \cup \varphi^{-1}(d)$ and note that $e(Q) \ge \frac{d}{D + 1}  \cdot e(I)$ follows from~\eqref{eq:Vizing-property}.
  Further, $\Delta(Q) \le d$, as $\varphi$ induces a proper edge-colouring of $Q$ with $d$ colours.
  If $\Delta(Q) < d$, we can add to $Q$ further edges of $I$ until $\Delta(Q) = d$.
\end{proof}

\begin{lemma}
  \label{lemma:F-unfriendly-partition}
  If $\Pi = (V_1, \dotsc, V_r)$ is a largest $r$-cut in a graph $F$, then
  \begin{equation}
    \label{eq:F-unfriendly-partition}
    \deg_F(v, V_1) \le \deg_F(v, V_i) \text{ for all $v \in V_1$ and $i \in \br{r}$}.
  \end{equation}
\end{lemma}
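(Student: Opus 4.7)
The statement is a standard local-optimality property of maximum cuts, and I would prove it by the usual swapping argument. The plan is to assume toward a contradiction that the inequality fails for some $v \in V_1$ and some $i \in \br{r}$, and then exhibit a cut with strictly more crossing edges than $\Pi$, contradicting the maximality of $\Pi$.

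The case $i = 1$ is trivial, so assume there exist $v \in V_1$ and $i \in \br{r} \setminus \{1\}$ with $\deg_F(v, V_1) > \deg_F(v, V_i)$. Let $\Pi' = (V_1', \dotsc, V_r')$ be the $r$-cut obtained from $\Pi$ by moving $v$ from $V_1$ to $V_i$, that is, $V_1' = V_1 \setminus \{v\}$, $V_i' = V_i \cup \{v\}$, and $V_j' = V_j$ for $j \notin \{1, i\}$. The edges of $F$ that do not touch $v$ cross $\Pi'$ if and only if they cross $\Pi$, so the only change in the crossing count comes from edges incident to $v$. Under $\Pi$, the edges from $v$ that cross are those with other endpoint in $\bigcup_{j \ne 1} V_j$, contributing $\sum_{j \ne 1} \deg_F(v, V_j)$ crossing edges; under $\Pi'$, the edges from $v$ that cross are those with other endpoint in $\bigcup_{j \ne i} V_j$, contributing $\sum_{j \ne i} \deg_F(v, V_j)$. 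Subtracting yields
\[
  e(F \cap \ext(\Pi')) - e(F \cap \ext(\Pi)) = \deg_F(v, V_1) - \deg_F(v, V_i) > 0,
\]
contradicting the assumption that $\Pi$ is a largest $r$-cut in $F$. Hence~\eqref{eq:F-unfriendly-partition} holds.

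There is no real obstacle here; the only thing to be careful about is bookkeeping the signs in the swap computation (one loses the edges from $v$ to $V_i$, previously crossing, and gains the edges from $v$ to $V_1 \setminus \{v\}$, previously internal), which the one-line display above makes explicit.
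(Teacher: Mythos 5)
Your proof is correct and takes essentially the same approach as the paper: both argue by contradiction that moving $v$ from $V_1$ to $V_i$ would produce a cut with strictly more crossing edges, contradicting maximality. Your version simply spells out the bookkeeping that the paper leaves implicit.
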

\begin{proof}
  Indeed, if~\eqref{eq:F-unfriendly-partition} were not true for some $v \in V_1$ and $i \in \br{r}$, then moving a vertex $v$ from $V_1$ to $V_i$ would result in a cut $\Pi'$ with $e(\ext(\Pi') \cap F) > e(\ext(\Pi) \cap F)$, contradicting the maximality of $\Pi$.
\end{proof}

\begin{prop}
  \label{prop:Q-subgraph-F}
  Suppose that $F \subseteq K_n$ and let $\Pi_F = (V_1, \dotsc, V_r)$ be its canonical largest $r$-cut.  There exists $Q_F \subseteq F$, which we $\br{r}$-colour according to $\Pi_F$, such that one of the following holds:
  \begin{enumerate}[label={($\cQ_{\arabic*}$)}]
  \item
    \label{item:low-degree}
    $Q_F \in \cQ_L$ and $e(Q_F) \ge e(F[V_1])/2$;
  \item
    \label{item:medium-degree}
    $Q_F \in \cQ_L$ and $e(Q_F) \ge \max\left\{\frac{\kappa np}{\log n}, \frac{\kappa}{4\eta\log n} \cdot e(F[V_1])\right\}$;
  \item
    \label{item:high-degree}
    $Q_F \in \cQ_H$ and $k(Q_F) \ge e(F[V_1])/(16\Delta(F))$.
  \end{enumerate}
\end{prop}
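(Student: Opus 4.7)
The plan is to split into three cases according to the maximum degree $D := \Delta(F[V_1])$ of the internal graph $I := F[V_1]$, producing a different $Q_F$ in each regime.

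In the low regime $D \le \kappa np/\log n$, I would simply take $Q_F := I$ with all of its vertices coloured~$1$: this immediately lies in $\cQ_L$ and satisfies $e(Q_F) = e(F[V_1]) \ge e(F[V_1])/2$, giving ($\cQ_1$). In the intermediate regime $\kappa np/\log n < D \le 4\eta np$, I would apply \autoref{lemma:Vizing} to $I$ with $d := \lceil \kappa np/\log n\rceil \le D$, extracting $Q \subseteq I$ with $\Delta(Q) = d$ and $e(Q) \ge \frac{d}{D+1}\cdot e(I)$. The upper bound on $D$ then yields
\[
  e(Q) \ge \frac{\kappa np/\log n}{4\eta np+1}\cdot e(F[V_1]) \ge \frac{\kappa}{4\eta\log n}\cdot e(F[V_1])
\]
for $n$ large, while $\Delta(Q) = d$ trivially forces $e(Q) \ge d \ge \kappa np/\log n$; colouring $V(Q) \subseteq V_1$ with $1$ puts $Q$ in $\cQ_L$ and confirms ($\cQ_2$).

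The remaining case $D > 4\eta np$ is where I aim for ($\cQ_3$). Here I would build $Q_F \in \cQ_H$ by selecting a set $X \subseteq V_1$ of centres and, for each $v \in X$, including in $Q_F$ exactly $\eta np$ $F$-neighbours of $v$ in each of $V_1 \setminus X, V_2, \dotsc, V_r$. Such a $Q_F$ is automatically dominated by $X$, and $X$ is independent in $Q_F$ since every edge of $Q_F$ goes from $X$ to its complement. By \autoref{lemma:F-unfriendly-partition}, any $v \in V_1$ satisfies $\deg_F(v, V_i) \ge \deg_I(v)$ for every $i \in \br{r}$, so the selection is feasible in each $V_i$ with $i \ge 2$ as soon as $\deg_I(v) \ge \eta np$; the binding constraint on the choice of $X$ becomes $\deg_I(v, X) \le \deg_I(v) - \eta np$ for every $v \in X$.

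To produce $X$ with $|X| \ge e(F[V_1])/(16\Delta(F))$, set $W := \{v \in V_1 : \deg_I(v) \ge 2\eta np\}$, which is non-empty by the case hypothesis. If $e(F[V_1]) \le 16\Delta(F)$ the required bound is at most $1$, so a singleton $X := \{v^*\}$ for any $v^* \in W$ of maximum $I$-degree works, with the constraint holding trivially. Otherwise, I would initialise $X := W$ and iteratively discard any $v \in X$ violating $\deg_I(v, X) \le \deg_I(v) - \eta np$; each such discard destroys more than $\eta np$ edges of $I[X]$, so the total number of discards is bounded by $e(F[V_1])/(\eta np)$. Combined with the double-counting estimate $|W|\cdot \Delta(F) \ge 2e(F[V_1]) - 2\eta n^2 p$, this should leave $|X| \ge e(F[V_1])/(16\Delta(F))$, once $\kappa$ and $\eta$ are taken sufficiently small. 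The main obstacle I anticipate is precisely verifying this final estimate uniformly across all joint values of $e(F[V_1])$ and $\Delta(F)$: the greedy discard bound is tight only in certain regimes, which is why the constants of \autoref{sec:constants} must be introduced in the prescribed order—with $\eta$ fixed after $\kappa$—so that the greedy losses never outstrip the available supply of high-degree vertices in $W$.
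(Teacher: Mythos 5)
Your low and intermediate regimes are fine (modulo a harmless $\frac{d}{D+1}$ versus $\frac{d}{D}$ rounding issue, which the paper absorbs by building in a factor-$2$ slack), but the high regime contains a genuine gap, and it stems from the case split itself. You branch on $D=\Delta(F[V_1])$, whereas the paper branches on whether a \emph{bounded-degree subgraph} $I_L\subseteq I=F[V_1]$ with $\Delta(I_L)\le 2\eta np$ can capture at least half of $e(I)$. These are not equivalent: $I$ can have one vertex of degree $5\eta np$ while essentially all of its edges lie in a $(2\eta np-2)$-regular part. Concretely, take $I$ to be the disjoint union of a $(2\eta np-2)$-regular graph on almost all of $V_1$ (so $e(I)=\Theta(\eta n^2p/r)$) and a single vertex $v_0$ of degree $5\eta np$, with $\Delta(F)=\Theta(r\eta np)$. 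Your case $D>4\eta np$ is triggered, but $W=\{v:\deg_I(v)\ge 2\eta np\}=\{v_0\}$, so any $X\subseteq W$ has $|X|\le 1$, while $e(F[V_1])/(16\Delta(F))=\Theta(n/r^2)$. No choice of $\kappa$ and $\eta$ rescues this; the required conclusion \ref{item:high-degree} is simply unattainable from your $W$, and you never fall back to \ref{item:low-degree} or \ref{item:medium-degree} in this regime (which is how the statement is actually satisfied here: the regular part is a valid low-degree $Q_F$). The same defect shows up in your double-counting bound $|W|\cdot\Delta(F)\ge 2e(F[V_1])-2\eta n^2p$: in the regime where the proposition is applied one has $e(F[V_1])\le\beta n^2p\ll\eta n^2p$, so the right-hand side is negative and the bound says nothing.

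A secondary, independent weakness is the greedy selection of $X$ even when most edges genuinely are incident to high-degree vertices: your bound of $e(I)/(\eta np)$ on the number of discards can exceed your lower bound on $|W|$ (which is at most of order $e(I)/\Delta(I)$, and $\Delta(I)>4\eta np$ in this case), so the estimate $|X|\ge|W|-\#\mathrm{discards}$ does not close. The paper avoids both problems: it first extracts $I_L$ and, if $e(I_L)<e(I)/2$, works with the graph $\tilde Q$ of edges meeting $Y=\{v:\deg_I(v)>2\eta np\}$ (which then provably carries at least $e(I)/4$ edges), and obtains the independent centre set together with the $\ge\eta np$ internal neighbours per centre in one stroke from a largest $2$-cut $(W_1,W_2)$ of $\tilde Q$ via \autoref{lemma:F-unfriendly-partition}, rather than by a discarding argument. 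You would need to restructure the case analysis along these lines for the proof to go through.
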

\begin{proof}
  Let $I_L$ be a largest subgraph of $I \coloneqq F[V_1]$ of maximum degree at most $2\eta np$.
  We consider two cases:
  
  \smallskip
  \noindent
  \textit{Case 1. $e(I_L) \ge e(I)/2$.}
  Let $d \coloneqq \kappa np / \log n$.
  If $\Delta(I_L) \le d$, then we simply choose $Q_F \coloneqq I_L$, as it satisfies~\ref{item:low-degree}. Otherwise, \autoref{lemma:Vizing} applied to $I_L$ yields a subgraph $Q \subseteq I_L$ with $\Delta(Q) = d$ and
  \[
    e(Q) \ge \frac{\kappa}{2\eta \log n} \cdot e(I_L) \ge \frac{\kappa}{4\eta\log n} \cdot e(I).
  \]
  Since $e(Q) \ge \Delta(Q) = d$ as well, we may take $Q_F \coloneqq Q$, as it satisfies~\ref{item:medium-degree}.

  \smallskip
  \noindent
  \textit{Case 2. $e(I_L) < e(I)/2$.}
  Let $Y$ be the set of vertices whose $I$-degree is larger than $2 \eta np$ and let $\tilde{Q} \subseteq I$ be the graph containing all edges with at least one endpoint in $Y$.
  Since at least one endpoint of every edge of $I \setminus I_L$ lies in $Y$, we have
  \[
    e(\tilde{Q}) \ge \frac{1}{2} \sum_{v \in Y} \deg_{\tilde{Q}}(v) \ge \frac{1}{2} \sum_{v \in Y} \deg_{I \setminus I_L}(v) \ge \frac{e(I \setminus I_L)}{2} \ge \frac{e(I)}{4}.
  \]
  Let $\Sigma = (W_1, W_2)$ be a largest $2$-cut of $\tilde{Q}$.
  By maximality, for both $j \in \{1,2\}$, every vertex $v \in W_j$ has at least as many neighbours in $W_{3-j}$ as in $W_j$, see \autoref{lemma:F-unfriendly-partition}.
  In particular, for each $j \in \{1,2\}$, the subgraph of $\tilde{Q} \cap \ext(\Sigma)$ induced by $(W_j \cap Y, W_{3-j})$ is a union of stars of degree at least $\eta np$ each.
  Let $Q$ be the larger of these two graphs; without loss of generality, $Q$ is the graph induced by $(W_1 \cap Y, W_2)$.
  We have
  \[
    e(Q) \ge \frac{e_{\tilde{Q}}(W_1 \cap Y, W_2) + e_{\tilde{Q}}(W_1, W_2 \cap Y)}{2} \ge \frac{e(\tilde{Q} \cap \ext(\Sigma))}{2} \ge \frac{e(\tilde{Q})}{4} \ge \frac{e(I)}{16}.
  \]
  Finally, let $Q_F$ be the graph obtained by arbitrarily adding, for each $v \in W_1 \cap Y$, a set of $\eta np$ $Q$-neighbours of $v$ in $W_2 \subseteq V_1$ and, for each $i \in \{2, \dotsc, r\}$, a set of $\eta np$ $F$-neighbours of $v$ in $V_i$.
  This is possible, as assumption~\eqref{eq:F-unfriendly-partition} implies that, for every $v \in Y$ and $i \in \{2, \dotsc, r\}$,
  \[
    \deg_F(v,V_i) \ge \deg_F(v, V_1) = \deg_{\tilde{Q}}(v) \ge \deg_Q(v) \ge \eta np.
  \]
  Note that $Q_F \in \cQ_H$ (the dominating independent set is $X \coloneqq W_1 \cap Y$) and that $k(Q_F) = |W_1 \cap Y| \ge e(Q) / \Delta(Q) \ge re(Q)/\Delta(F) \ge re(I)/(16\Delta(F))$, where the second inequality follows as $\deg_F(v) = \sum_{i=1}^r \deg_F(v, V_i) \ge r \deg_Q(v)$ for every vertex $v$.
\end{proof}

We denote by $\cQ_L^1$ and $\cQ_L^2$ the sets of graphs corresponding to cases~\ref{item:low-degree} and~\ref{item:medium-degree} in the above proposition.  More precisely, let
\[
  \cQ_L^1 \coloneqq \{Q \in \cQ_L : e(Q) < \kappa np / \log n\}
  \quad
  \text{and}
  \quad
  \cQ_L^2 \coloneqq \{Q \in \cQ_L : e(Q) \ge \kappa np / \log n\}.
\]

\begin{dfn}
  \label{dfn:choice-of-d_Q}
  Define, for every $Q \in \cQ_L \cup \cQ_H$,  
  \[
    d_Q \coloneqq
    \begin{cases}
      8re(Q),\ & Q \in \cQ_L^1 \text{ and } p > \Cth p_H, \\
      \min\{\beta n^2p, \sqrt{\eta} e(Q) \log n\},\ & Q \in \cQ_L^2 \text{ or } (Q \in \cQ_L^1 \text{ and } p \le \Cth p_H), \\
      32k(Q)np ,\ &Q \in \cQ_H.
    \end{cases}
  \]
\end{dfn}

\begin{prop}
  If $G$ satisfies the assertion of \autoref{thm:Conlon-Gowers} and $\Delta(G) \le 2np$,
  then $d_{Q_F} \ge e(F \cap \int(\Pi_F))$ for every largest $H$-free $F \subseteq G$, provided $n$ is sufficiently large.
\end{prop}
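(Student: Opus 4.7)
My plan is a direct case analysis, combining the trichotomy of \autoref{prop:Q-subgraph-F} with the three branches of \autoref{dfn:choice-of-d_Q}. Two preliminary bounds on $e(F \cap \int(\Pi_F))$ feed into every case. First, because $\Pi_F = (V_1, \dotsc, V_r)$ is chosen canonically---maximising $e(F[V_1])$ among all largest cuts of $F$---the labels can be arranged so that $e(F[V_1]) \ge e(F[V_i])$ for every $i$, yielding $e(F \cap \int(\Pi_F)) \le r \cdot e(F[V_1])$. Second, \autoref{thm:Conlon-Gowers} applied to $G$ produces an $r$-cut $\Pi$ with $e(F \cap \int(\Pi)) \le \beta n^2 p$, and since $\Pi_F$ is a largest cut in $F$ this gives $e(F \cap \int(\Pi_F)) \le \beta n^2 p$.

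If $Q_F \in \cQ_L^1$ with $p > \Cth p_H$, only case \ref{item:low-degree} of \autoref{prop:Q-subgraph-F} is compatible with the defining constraint $e(Q_F) < \kappa np/\log n$ of $\cQ_L^1$ (since \ref{item:medium-degree} forces $e(Q_F) \ge \kappa np/\log n$); hence $e(Q_F) \ge e(F[V_1])/2$ and
\[
  d_{Q_F} = 8r \cdot e(Q_F) \ge 4r \cdot e(F[V_1]) \ge 4 \cdot e(F \cap \int(\Pi_F)).
\]
If $Q_F \in \cQ_H$, the proof of \autoref{prop:Q-subgraph-F} in fact establishes the stronger bound $k(Q_F) \ge r \cdot e(F[V_1])/(16\Delta(F))$---the factor of $r$ arising from~\eqref{eq:F-unfriendly-partition}. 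Combining this with $\Delta(F) \le \Delta(G) \le 2np$ and the first preliminary bound gives
\[
  e(F \cap \int(\Pi_F)) \le r \cdot e(F[V_1]) \le 32 np \cdot k(Q_F) = d_{Q_F}.
\]

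The remaining case is $Q_F \in \cQ_L^2$, or $Q_F \in \cQ_L^1$ with $p \le \Cth p_H$, where $d_{Q_F} = \min\{\beta n^2 p,\ \sqrt{\eta} \cdot e(Q_F) \log n\}$. The first term in the minimum is already at least $e(F \cap \int(\Pi_F))$ by the Conlon--Gowers bound. For the second, case \ref{item:low-degree} gives $e(F[V_1]) \le 2 e(Q_F)$ while case \ref{item:medium-degree} gives $e(F[V_1]) \le (4\eta\log n/\kappa) \cdot e(Q_F)$; combined with the first preliminary bound, each implies $e(F \cap \int(\Pi_F)) \le \sqrt{\eta} \cdot e(Q_F) \log n$ for sufficiently large $n$, provided $\kappa \ge 4r\sqrt{\eta}$.

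The only mildly delicate point is arranging the inequality $\kappa \ge 4r\sqrt{\eta}$ in the last case, and this is immediate from the constant hierarchy of \autoref{sec:constants}: because $\eta = \eta(H, \Cpar, Z, \eps, \kappa)$ is chosen as a function of $\kappa$ (not conversely), we can simply take $\eta$ small enough in terms of $\kappa$ and $r = \chi(H) - 1$. Every other step reduces to a straightforward application of \autoref{prop:Q-subgraph-F}, \autoref{thm:Conlon-Gowers}, or the hypothesis $\Delta(G) \le 2np$.
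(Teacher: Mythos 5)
Your proof is correct and follows the same case analysis as the paper's own argument, with a slightly more explicit bookkeeping of which branch of \autoref{prop:Q-subgraph-F} feeds which branch of \autoref{dfn:choice-of-d_Q}. Your observation in the $\cQ_H$ case is a genuine and needed clarification: the proof of \autoref{prop:Q-subgraph-F} actually establishes $k(Q_F) \ge r\cdot e(F[V_1])/(16\Delta(F))$, a factor of $r$ stronger than the bound as stated in~\ref{item:high-degree}, and the paper's proof of the present proposition relies on this stronger form implicitly---with the stated bound alone, the chain $32k(Q_F)np \ge 16k(Q_F)\Delta(F) \ge r\,e(F[V_1])$ would lose its factor of $r$.
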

\begin{proof}
  Let $F \subseteq G$ be a largest $H$-free subgraph of $G$.
  Since $\Pi_F = (V_1, \dotsc, V_r)$ has the largest value of $e(F[V_1])$ among all largest $r$-cuts of $F$, we have $e(F \cap \int(\Pi_F)) \le re(F[V_1])$.
  Moreover, since $G$ is assumed to satisfy the assertion of \autoref{thm:Conlon-Gowers}, we also know that $e(F \cap \int(\Pi_F)) \le \beta n^2p$.
  Consequently, it is enough to verify that $d_{Q_F} \ge re(F[V_1])$ under the assumption that $d_{Q_F} < \beta n^2p$.
  
  If $Q_F \in \cQ_L^1$, then $Q_F$ must satisfy~\ref{item:low-degree} in \autoref{prop:Q-subgraph-F} and thus
  \[
    d_{Q_F} \ge \min\left\{8re(Q_F), \sqrt{\eta} e(Q_F) \log n\right\} = 8re(Q_F) \ge 4re(F[V_1]),
  \]
  provided that $n$ is sufficiently large.
  If $Q_F \in \cQ_L^2$, then $Q_F$ must satisfy one of~\ref{item:low-degree} or~\ref{item:medium-degree} in \autoref{prop:Q-subgraph-F} and thus
  \[
    d_{Q_F} = \sqrt{\eta} \cdot e(Q_F) \log n \ge \sqrt{\eta} \cdot \min\left\{\frac{\kappa}{4\eta}, \frac{\log n}{2} \right\} \cdot e(F[V_1])
  \]
  and the desired inequality follows as $\eta \le \eta(\kappa)$ and $n$ is large.
  Finally, if $Q \in \cQ_H$, then
  \[
    d_{Q_F} = 32k(Q_F)np \ge 16k(Q_F) \Delta(Q_F) \ge re(F[V_1]),
  \]
  where the first inequality follows as $Q_F \subseteq F \subseteq G$ and $\Delta(G) \le 2np$.
\end{proof}

Finally, for every graph $Q \in \cQ_L \cup \cQ_H$, we let $\cC_Q$ be the family of all $\delta$-balanced cuts that are compatible with $Q$.  Since $Q_F$ inherits its $r$-colouring from $\Pi_F$, it is clearly compatible with this cut.  The following proposition shows that, asymptotically almost surely, $\Pi_F$ is also $o(1)$-balanced for every largest $H$-free subgraph $F$ of $G$.

\begin{prop}
  \label{prop:PiF-balanced}
  For every constant $\delta > 0$, a.a.s.\ the cut $\Pi_F$ is $\delta$-balanced for every largest $H$-free subgraph $F \subseteq G$.
\end{prop}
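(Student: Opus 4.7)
The plan is to combine uniform edge-count concentration in $G_{n,p}$ with \autoref{thm:Conlon-Gowers} to show that, if $\Pi_F$ were not $\delta$-balanced, then $F$ would contain fewer edges than the $r$-partite subgraph of $G$ cut by a balanced equipartition, contradicting the maximality of $F$ among $H$-free subgraphs. The first step I would carry out is to prove that, asymptotically almost surely, every $r$-cut $\Pi$ of $V(G)$ satisfies
\[
  e(G \cap \ext(\Pi)) = p \cdot |\ext(\Pi)| \pm o(n^2 p).
\]
For fixed $\Pi$, this is a Chernoff-bound estimate for $e(G \cap \ext(\Pi)) \sim \Bin(|\ext(\Pi)|, p)$; since there are at most $r^n$ $r$-cuts of $V(G)$ and our lower bound $p \ge p_H$ forces $np \gg \log n$ (via $m_2(H) \ge 2$, as $H$ is nonbipartite), a union bound with deviation parameter $\gamma = 1/\sqrt{\log\log n}$, say, succeeds.

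Given this, I would fix an equipartition $\Pi_0$ of $V(G)$ into $r$ parts of size $n/r \pm 1$, for which $|\ext(\Pi_0)| = (1 - 1/r)\binom{n}{2} - O(n)$; since $G \cap \ext(\Pi_0)$ is $r$-partite and therefore $H$-free, the maximality of $F$ together with the first step yields
\[
  e(F) \ge e(G \cap \ext(\Pi_0)) \ge \left(1 - \frac{1}{r}\right)\binom{n}{2} p - o(n^2 p).
\]
On the other hand, as recorded at the end of \autoref{sec:constants}, a.a.s.\ $e(F \cap \int(\Pi_F)) \le \beta n^2 p$ for some $\beta = \beta(n) = o(1)$, whence $e(F \cap \ext(\Pi_F)) \ge e(F) - o(n^2 p)$. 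Combining this with the first step now applied to $\Pi = \Pi_F$ (and using $F \subseteq G$),
\[
  p \cdot |\ext(\Pi_F)| \ge e(G \cap \ext(\Pi_F)) - o(n^2 p) \ge e(F \cap \ext(\Pi_F)) - o(n^2 p) \ge \left(1 - \frac{1}{r}\right)\binom{n}{2} p - o(n^2 p).
\]

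To finish, I would write $|V_i| = n/r + a_i$ with $\sum_i a_i = 0$ and combine the identity
\[
  |\ext(\Pi_F)| = \frac{1}{2}\bigl(n^2 - \textstyle\sum_i |V_i|^2\bigr) = \left(1 - \frac{1}{r}\right)\binom{n}{2} - \frac{1}{2}\sum_i a_i^2 + O(n)
\]
with the previous display to conclude $\sum_i a_i^2 = o(n^2)$; consequently each $|a_i| = o(n)$, so $\Pi_F$ is $\delta$-balanced for every fixed $\delta > 0$ once $n$ is sufficiently large. The only delicate point is the uniformity in the first step, which requires the Chernoff tail $\exp(-\Omega(\gamma^2 n^2 p))$ to beat the $r^n$ cuts for some $\gamma = o(1)$; the bound $np \gg \log n$ coming from the hypothesis on $p$ supplies this comfortably.
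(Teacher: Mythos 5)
Your argument is correct and follows essentially the same route as the paper's proof: uniform concentration of $e(G \cap \ext(\Pi))$ over all cuts (the paper's \autoref{lemma:cuts-concentration}), the lower bound $e(F) \ge e(G \cap \ext(\Pi_0))$ for a fixed balanced cut, and \autoref{thm:Conlon-Gowers} to discard the internal edges, with the paper merely phrasing the conclusion as a contradiction with $|\ext(\Pi_F)| \le (1 - 1/r - c_\delta)\binom{n}{2}$ instead of via $\sum_i a_i^2 = o(n^2)$. One small slip: $m_2(H) \ge 2$ is false in general (e.g.\ $m_2(C_5) = 4/3$), but $m_2(H) > 1$ already gives $np \ge n^{1 - 1/m_2(H)} \gg \log n$, so your union bound goes through unchanged.
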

\begin{proof}
  Assume by contradiction that $\Pi_F$ is not $\delta$-balanced.
  It is easy to check that, for some positive $c_\delta$ that depends only on $\delta$,
  \[
    \ext(\Pi_F) \le \left(1-\frac{1}{r}-c_\delta\right)\binom{n}{2}.
  \]
  Furthermore,
  \begin{equation}
    \label{eq:F-upper}
    e(F) = e(F \cap \ext(\Pi_F)) + e(F \cap \int(\Pi_F)) \le e(G \cap \ext(\Pi_F)) + e(F \cap \int(\Pi_F)).
  \end{equation}
  Suppose now that assertions of \autoref{thm:Conlon-Gowers} and \autoref{lemma:cuts-concentration} hold in $G$.
  Since every $r$-partite subgraph of $G$ is $H$-free, letting $\Pi$ be some fixed largest $r$-cut of $K_n$, we have
  \[
    e(F) \ge e(G \cap \ext(\Pi)) \ge \ext(\Pi) \cdot p - o(n^2p) \ge \left(1 - \frac{1}{r} - o(1)\right)\binom{n}{2}p.
  \]
  On the other hand, by the assumed conclusion of \autoref{lemma:cuts-concentration},
  \[
    e(G \cap \ext(\Pi_F)) \le \ext(\Pi_F) \cdot p + o(n^2p) \le \left(1 - \frac{1}{r} - c_\delta+o(1)\right)\binom{n}{2}p;
  \]
  moreover, $e(F \cap \int(\Pi_F)) = o(n^2p)$ by the assumed conclusion of \autoref{thm:Conlon-Gowers}.
  Substituting these three estimates into~\eqref{eq:F-upper} yields a contradiction.
\end{proof}

\subsection{Large matchings in cuts with small deficits}
\label{sec:large-matchings-cuts}

In this section, we outline the proof of~\ref{item:plan-P2} from the above plan, which constitutes the bulk of this paper.
Since we can afford a union bound over all $Q \in \cQ$, we will treat $Q$ as fixed.

Let $\Pi \in \cC_Q$ be a $\delta$-balanced $r$-cut that is compatible with $Q$.
The probability of the event $\nu\big(\cF_Q[G \cap \ext(\Pi)]\big) \le d_Q$ can be bounded from above using a version of Janson's inequality that we state (and prove) as \autoref{thm:Janson-matchings} below in terms of the quantities $\mu_p(\cF_Q[\ext(\Pi)])$ and $\Delta_p(\cF_Q[\ext(\Pi)])$, defined in~\autoref{sec:Janson-inequality}, that should be familiar to all users of Janson's inequality.
Having said that, in order to obtain bounds on these parameters that would translate to an error probability that is sufficiently small to allow a union bound over all $Q \in \cQ$, we have to replace $\cF_Q$ by its (carefully chosen) subhypergraph $\cF_Q'$.
Whereas the construction of an appropriate $\cF_Q'$ in the `low-degree' case $Q \in \cQ_L$, presented in \autoref{sec:low-degree-case}, is rather straightforward, the construction of $\cF_Q'$ in the `high-degree' $Q \in \cQ_H$, presented in \autoref{sec:high-degree-case}, is a subtle argument that crucially uses the assumption that $G$ is a typical sample from $G_{n,p}$, which gives us control on the interactions between neighbourhoods of the vertices in $X_Q$ (see \autoref{lemma:tilde-G-hypergraph} below).
Our presentation here is inspired by~\cite{BalMorSamWar16, EngSamWar}.

A major challenge in executing the above strategy is that, unless $d_Q \gg n$, we cannot afford a union bound over all $\Pi \in \cC_Q$.
This is no accident -- when $Q$ is small and $p$ is not very close to one, one will find many cuts $\Pi \in \cC_Q$ for which $\cF_Q[G \cap \ext(\Pi)]$ is empty.
Luckily, \ref{item:plan-P2} does not require us to show that $\nu\big(\cF_Q[G \cap \ext(\Pi)]\big) > d_Q$ for all $\Pi \in \cC_Q$ but only for cuts $\Pi$ with small deficit.
This is where we employ and adapt the beautiful ideas of DeMarco and Kahn~\cite{DeMKah15Tur} that are centred around the concept of \emph{rigidity}.

Roughly speaking, a graph $G$ is \emph{rigid} if there is a collection $S = \{S_1, \dotsc, S_r\}$ of pairwise-disjoint vertex sets of size at least $(1-4r\alpha) \cdot n/r$ each such that $\ext(S) \subseteq \ext(\Pi)$ for \emph{every} maximum $r$-cut of $G$; a canonically chosen collection $S$ with this property is called the \emph{core} of $G$ and denoted by $\core(G)$.
An ingenious argument employing Harris's inequality, due to DeMarco and Kahn~\cite{DeMKah15Tur}, can be used to show that the probability that $G$ is rigid and $\nu\big(\cF_Q[G \cap \ext(\core(G))]\big) \le d_Q$ is not larger than the maximum value of the probability that $\nu\big(\cF_Q[G \cap \ext(S)]\big) \le d_Q$ over all collections $S$ of potential cores (i.e., collections of $r$ pairwise-disjoint sets of size at least $(1-4r\alpha) \cdot n/r$ each).
In other words, the DeMarco--Kahn correlation inequality reduces the problem of bounding the lower tail of $\nu\big(\cF_Q[G \cap \ext(\core(G))]\big)$ to the problem of bounding the lower tail of $\nu\big(\cF_Q[G \cap \ext(S)]\big)$ for a \emph{fixed} `almost' $r$-cut $S$.
\autoref{sec:rigidity-correlation} contains the formal definition of rigidity and the statement and proof of the DeMarco--Kahn correlation inequality.

Since $\ext(\core(G)) \subseteq \ext(\Pi)$, and thus $\cF_Q[G \cap \ext(\Pi)] \supseteq \cF_Q[G \cap \ext(\core(G))]$, for every largest $r$-cut $\Pi$ of $G$, the argument described in the previous paragraph suffices to control all cuts with zero deficit.
What to do about cuts with larger deficit?
Following DeMarco and Kahn~\cite{DeMKah15Tur}, we show that, asymptotically almost surely, $G$ has the following property:
For every $\Pi$ with $\deficit(\Pi; G) \le d_Q$, there are `many' graphs $G'$ that are rigid, have $\Pi$ among their largest $r$-cuts, and are `close' to $G$ in terms of the number of edges in their symmetric difference.
The precise notion of both `many' and `close' are too technical to be stated here, but the bottom line is that the notion of proximity implies that, for each $G'$,
\[
  \nu\big(\cF_Q[G \cap \ext(\Pi)]\big) \ge \nu\big(\cF_Q[G' \cap \ext(\core(G'))]\big) - O(d_Q)
\]
whereas the notion of multitude yields that the probability that $\nu\big(\cF_Q[G \cap \ext(\Pi)]\big) \le d_Q$ for \emph{some} cut $\Pi$ with $\deficit(\Pi; G) \le d_Q$ is bounded from above by
\[
  \Pr\left(\nu\big(\cF_Q[G' \cap \ext(\core(G'))]\big) \le O(d_Q)\right) \cdot \mathrm{err}(d_Q),
\]
where $\mathrm{err}(d_Q)$ is an error term that is, roughly speaking, of the form $\exp(O(d_Q))$.
\autoref{prop:algorithm-result} is a rather concise statement that encapsulates this.
Our proof of the proposition, which takes the bulk of \autoref{sec:algorithm}, is a very delicate switching argument that is based on the work of DeMarco and Kahn~\cite{DeMKah15Tur}, but departs from it in significant ways in order to provide a better estimate on $\mathrm{err}(d_Q)$.

Finally, the proof of \autoref{thm:main} culminates in \autoref{sec:proof}, where the various ingredients developed in earlier sections are finally mixed together.
Our argument there stumbles on yet another technical (described at the start of \autoref{sec:proof}) issue that occurs when $Q \in \cQ_L^2$ and $p > \Cth p_H$.  We solve this issue by replacing the family $\cF_Q'$ with its random sparsification, see \autoref{sec:sparsification}.  The final stretch is mere four pages of simple calculations.

\section{Tools and preliminaries}
\label{sec:tools-preliminaries}

\subsection{Janson's inequality}
\label{sec:Janson-inequality}

In order to successfully execute the plan sketched in~\autoref{section:outline}, we will need to bound the probability that a binomial random subset of vertices of a given hypergraph induces a subhypergraph that does not have a large matching.
We will derive a suitable lower-tail estimate for the matching number of a random induced subhypergraph, \autoref{thm:Janson-matchings} below, from the well-known inequality of Janson~\cite{Jan90};  our derivation follows the arguments of~\cite[Section~8.4]{AloSpe16}.

Given a set $V$ and a real $p \in [0,1]$, we denote by $V_p$ the random subset of $V$ obtained by independently retaining each element of $V$ with probability $p$.  Further, given a hypergraph $\cH$ with vertex set $V$, we define the following two quantities:
\[
  \mu_p(\cH) \coloneqq \sum_{A \in \cH} p^{|A|}
  \qquad
  \text{and}
  \qquad
  \Delta_p(\cH) \coloneqq \sum_{\substack{A, B \in \cH \\ A \neq B, A \cap B \neq \emptyset}} p^{|A \cup B|},
\]
where the second sum is over \emph{unordered} pairs of edges;  in other words, $\mu_p(\cH)$ is just the expected number of edges of $\cH[V_p]$ and $\Delta_p(\cH)$ is the expected number of pairs of distinct edges of $\cH[V_p]$ that intersect.

\begin{thm}[{Janson's inequality~\cite{Jan90}}]
  Let $\cH$ be a hypergraph on a finite vertex set $V$.  For all $p \in [0,1]$,
  \[
    \Pr(\cH[V_p] = \emptyset) \le \exp\big(-\mu_p(\cH)+\Delta_p(\cH)\big).
  \]
\end{thm}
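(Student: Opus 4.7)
The plan is to use the standard reduction of Janson's inequality to a sequential conditional lower bound, combined with the Harris--FKG correlation inequality on the product measure on $\{0,1\}^V$. Enumerate the edges of $\cH$ in an arbitrary order as $A_1, \dotsc, A_m$, and let $B_i$ denote the increasing event $\{A_i \subseteq V_p\}$, so that $\Pr(B_i) = p^{|A_i|}$ and $\Pr(B_i \cap B_j) = p^{|A_i \cup A_j|}$. The chain rule, together with the elementary inequality $1 - x \le e^{-x}$, yields
\[
\Pr\Big(\bigcap_i \overline{B_i}\Big) = \prod_{i=1}^m \Big(1 - \Pr\Big(B_i \,\Big|\, \bigcap_{j<i} \overline{B_j}\Big)\Big) \le \exp\Big(-\sum_{i=1}^m \Pr\Big(B_i \,\Big|\, \bigcap_{j<i} \overline{B_j}\Big)\Big),
\]
so it suffices to prove, for every $i$, the sequential lower bound
\[
\Pr\Big(B_i \,\Big|\, \bigcap_{j<i} \overline{B_j}\Big) \ge \Pr(B_i) - \sum_{\substack{j<i \\ A_j \cap A_i \neq \emptyset}} \Pr(B_i \cap B_j),
\]
because summing over $i$ contributes exactly $\mu_p(\cH) - \Delta_p(\cH)$, with each unordered intersecting pair counted once.

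To establish the sequential lower bound, I would partition $\{1, \dotsc, i-1\} = D \sqcup E$, where $D = \{j < i : A_j \cap A_i \neq \emptyset\}$, and set $C \coloneqq \bigcap_{j \in E} \overline{B_j}$. The proof rests on three observations: (i) $B_i$ is measurable with respect to coordinates indexed by $A_i$, whereas $C$ is measurable with respect to coordinates indexed by $\bigcup_{j \in E} A_j \subseteq V \setminus A_i$, so $\Pr(B_i \cap C) = \Pr(B_i)\Pr(C)$ by independence; (ii) for each $j \in D$, the event $B_i \cap B_j$ is increasing while $C$ is decreasing, so Harris's inequality yields $\Pr(B_i \cap B_j \cap C) \le \Pr(B_i \cap B_j)\Pr(C)$; and (iii) trivially $\Pr(\bigcap_{j \in D} \overline{B_j} \cap C) \le \Pr(C)$. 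Combining the union bound
\[
\Pr\Big(B_i \cap \bigcap_{j \in D} \overline{B_j} \cap C\Big) \ge \Pr(B_i \cap C) - \sum_{j \in D} \Pr(B_i \cap B_j \cap C)
\]
with (i)--(iii) and then dividing, the common factor $\Pr(C)$ cancels and the claimed lower bound drops out.

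The main subtlety is that $B_i$ (increasing) and the $\overline{B_j}$ (decreasing) are oppositely monotone, so a direct application of Harris points the wrong way. The splitting into $D$ and $E$ is exactly what reconciles this: the independent part $E$ is handled by exact independence of disjoint coordinate sets, while the dependent part $D$ contributes the correction term that amounts precisely to $\Delta_p(\cH)$ after summation. I do not expect any serious obstacle beyond the careful bookkeeping of these correlation directions.
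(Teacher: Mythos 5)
Your proof is correct: it is the standard Boppana--Spencer argument for Janson's inequality (chain rule, then the sequential lower bound on $\Pr(B_i \mid \bigcap_{j<i}\overline{B_j})$ obtained by splitting the earlier indices into those whose edges meet $A_i$ and those whose edges are disjoint from it, handling the former with Harris and the latter with independence). The paper itself does not prove this statement --- it imports it from Janson's original work and only proves the derived matching version, \autoref{thm:Janson-matchings}, following the arguments of~\cite[Section~8.4]{AloSpe16}; your argument is precisely the proof given there. The only points you gloss over are harmless: the cancellation of $\Pr(C)$ really uses observation (iii) to enlarge the denominator $\Pr\big(\bigcap_{j\in D}\overline{B_j}\cap C\big)$ to $\Pr(C)$, which is only valid when your lower bound on the numerator is nonnegative (otherwise the sequential bound is trivial since the left-hand side is nonnegative), and one should dispose separately of the degenerate case $\Pr\big(\bigcap_{j<i}\overline{B_j}\big)=0$. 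Neither affects correctness.
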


Before we state our version of Janson's inequality for matchings, we recall a well-known lower-tail estimate for the Poisson distribution.

\begin{prop}
  \label{prop:lower-tail-Poisson}
  For every nonnegative real $\mu$ and all $\alpha \in [0,1]$,
  \[
    \Pr\big(\Pois(\mu) \le \alpha \mu\big) \le \exp\left(-\big(1-\alpha \log(e/\alpha)\big) \cdot \mu \right).
  \]
\end{prop}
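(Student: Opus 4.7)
The plan is to derive the bound via the standard Chernoff-type moment generating function argument applied to the Poisson distribution. Let $X \sim \Pois(\mu)$. The key ingredient is the well-known identity $\Ex[e^{tX}] = \exp(\mu(e^t - 1))$, valid for every real $t$.

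First I would handle the degenerate case $\alpha = 0$ separately (where the claim reads $\Pr(X \le 0) = e^{-\mu}$, using the convention $0 \log (e/0) = 0$) and similarly the trivial case $\alpha = 1$ (where the right-hand side is $1$). In the remaining range $\alpha \in (0,1)$, Markov's inequality applied to $e^{-sX}$ for $s \ge 0$ gives
\[
  \Pr(X \le \alpha\mu) = \Pr\bigl(e^{-sX} \ge e^{-s\alpha\mu}\bigr) \le e^{s\alpha\mu}\cdot \Ex[e^{-sX}] = \exp\bigl(\mu(e^{-s}-1) + s\alpha\mu\bigr).
\]

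Next I would optimise the exponent in $s \ge 0$. Differentiating $\mu(e^{-s}-1) + s\alpha\mu$ with respect to $s$ yields $-\mu e^{-s} + \alpha\mu$, which vanishes at $s^* = -\log \alpha \ge 0$ (this is where we use $\alpha \in (0,1]$). Substituting $s = s^*$ gives $e^{-s^*} = \alpha$, hence the exponent equals
\[
  \mu(\alpha - 1) - \alpha\mu \log\alpha = -\mu\bigl(1 - \alpha + \alpha\log\alpha\bigr) = -\bigl(1 - \alpha\log(e/\alpha)\bigr)\mu,
\]
which is exactly the claimed bound.

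I expect no serious obstacle: everything reduces to the computation of the Poisson moment generating function and a one-variable optimisation. The only care needed is to verify that the optimiser $s^* = -\log\alpha$ satisfies $s^* \ge 0$ (so that the Markov step is legitimate) and to check the boundary behaviour at $\alpha = 0$ by a continuity/convention argument.
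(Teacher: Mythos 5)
Your proof is correct. The paper states this proposition as a well-known fact and gives no proof of its own, so there is nothing to compare against; your Chernoff-type argument --- Markov's inequality applied to $e^{-sX}$ using the Poisson moment generating function $\Ex[e^{-sX}] = \exp(\mu(e^{-s}-1))$, optimised at $s^* = -\log\alpha$ --- is exactly the standard derivation, and your algebra checking that $\mu(\alpha-1)-\alpha\mu\log\alpha = -(1-\alpha\log(e/\alpha))\mu$ as well as your treatment of the boundary cases $\alpha\in\{0,1\}$ are all in order.
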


\begin{thm}
  \label{thm:Janson-matchings}
  Let $\cH$ be a hypergraph on a finite vertex set $V$ and let $\alpha, \eta, p \in [0,1]$.
  Writing $\mu$ and $\Delta$ for $\mu_p(\cH)$ and $\Delta_p(\cH)$, we have
  \[
    \Pr\big(\nu(\cH[V_p]) \le \alpha \mu\big) \le \exp\big(-(1 - \alpha \log(e/\alpha) - \alpha p - \eta) \cdot \mu + (1 +2\alpha p/\eta) \cdot \Delta\big).
  \]
\end{thm}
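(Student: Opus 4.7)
The plan is to reduce the claim to a combination of Janson's inequality, a sharpened Chernoff-type lower-tail bound for the edge count of $\cH[V_p]$, and a simple estimate for the number of intersecting pairs.

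The first step is the deterministic observation that
\[
  \nu(\cH[V_p]) \ge |\cH[V_p]| - T,
\]
where $T$ denotes the number of unordered pairs of distinct, intersecting edges of $\cH[V_p]$. Indeed, if $M$ is any maximum matching of $\cH[V_p]$, each edge of $\cH[V_p]\setminus M$ must meet some edge of $M$ (otherwise $M$ could be extended), yielding at least $|\cH[V_p]|-|M|$ intersecting pairs. Consequently, for any auxiliary parameter $\eta'>0$,
\[
  \bigl\{\nu(\cH[V_p]) \le \alpha\mu\bigr\} \subseteq \bigl\{|\cH[V_p]| \le (\alpha+\eta')\mu\bigr\} \cup \bigl\{T \ge \eta'\mu\bigr\},
\]
and it suffices to bound the two parts separately.

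For the first event, I would derive a Chernoff--Janson lower-tail estimate of the form
\[
  \Pr\bigl(|\cH[V_p]| \le \beta\mu\bigr) \le \exp\bigl(-(1-\beta\log(e/\beta)-\beta p)\mu + \Delta\bigr),
\]
by computing the exponential moment $\Ex[e^{-\lambda|\cH[V_p]|}]$ via the FKG-style product expansion used in the standard proof of Janson's inequality and optimising the Chernoff parameter $\lambda=\log(1/\beta)$ to extract the Poisson rate $1-\beta\log(e/\beta)$. The additive $\beta p$ correction records the Bernoulli-vs.-Poisson discrepancy incurred when replacing $1-(1-e^{-\lambda})p^{|A|}$ by $\exp\bigl(-(1-e^{-\lambda})p^{|A|}\bigr)$. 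For the second event, I would apply an exponential Markov bound $\Pr(T\ge \eta'\mu)\le e^{-\lambda'\eta'\mu}\Ex[e^{\lambda'T}]$ with $\lambda'$ of order $\alpha p/\eta$, and bound the exponential moment by a product inequality of the form $\Ex[e^{\lambda'T}]\le \exp\bigl((e^{\lambda'}-1)\Delta\bigr)$ (again via the FKG-type expansion applied to the auxiliary hypergraph whose edges are unions $A\cup B$ over intersecting pairs); this is what produces the $\eta\mu$ and $(2\alpha p/\eta)\Delta$ contributions to the exponent.

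Combining the two tail estimates with the choice $\beta=\alpha+\eta$ (and identifying $\eta'$ with $\eta$), and carefully tracking all error terms, yields the stated inequality. The main obstacle is the sharp Poisson form of the edge-count lower tail: extracting the rate $1-\alpha\log(e/\alpha)$ (as opposed to the coarser sub-Gaussian rate $(1-\alpha)^2/2$) for sums of correlated Bernoulli indicators while preserving $\Delta$ as an additive error in the exponent is precisely what the large-deviation extension of Janson's inequality from Alon--Spencer Section~8.4 achieves; the remainder of the argument is bookkeeping to trace the $\alpha p$, $\eta$, and $(2\alpha p/\eta)\Delta$ corrections through the calculation.
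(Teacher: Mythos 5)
Your reduction $\nu(\cH[V_p]) \ge |\cH[V_p]| - T$ is correct, and the split into the two events $\{|\cH[V_p]| \le (\alpha+\eta)\mu\}$ and $\{T \ge \eta\mu\}$ is a legitimate idea, but the second half of your plan contains a genuine gap. The bound $\Ex[e^{\lambda' T}] \le \exp\big((e^{\lambda'}-1)\Delta\big)$ is false in general: $T$ is a sum of \emph{positively correlated} increasing indicators $\1[A \cup B \subseteq V_p]$, and Harris/FKG gives the \emph{reverse} inequality $\Ex[e^{\lambda' T}] \ge \prod \Ex[e^{\lambda' \1[A\cup B \subseteq V_p]}]$ for increasing functions; the product expansion in Janson's proof only works for $\Ex[e^{-\lambda X}]$, i.e., for lower tails. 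Concretely, if $\cH$ is a sunflower of $m$ pairs through a common vertex $v$, then conditionally on $v \in V_p$ one has $T \approx \binom{mp}{2}$, so $\Ex[e^{\lambda' T}] \ge \tfrac{p}{2}\,e^{\lambda' m^2p^2/2}$, which dwarfs $\exp\big((e^{\lambda'}-1)\Delta\big) = \exp\big(O(\lambda' m^2 p^3)\big)$. Obtaining $\exp(-\Omega(\mu))$-type upper-tail bounds for such pair-count statistics is exactly the ``infamous upper tail'' obstruction, and no bound of the form $\exp(O(\Delta))$ on the moment generating function is available. There are also secondary issues: the first event only yields exponent $-(1-(\alpha+\eta)\log(e/(\alpha+\eta))-\cdots)\mu$, and $(\alpha+\eta)\log(e/(\alpha+\eta))$ exceeds $\alpha\log(e/\alpha)+\eta$ for small $\alpha$, so even that term does not match the claimed rate; moreover the exponential-moment route to the Poisson-rate lower tail of $|\cH[V_p]|$ naturally produces a $\Delta\log(1/\beta)$ correction rather than an additive $\Delta$.

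For comparison, the paper avoids any upper-tail control of $T$ altogether. It first prunes $\cH$ to the subhypergraph $\cH'$ of $\eta$-good edges (those $A$ with $\delta(A) \le 2\Delta/(\eta\mu)$), losing only $\eta\mu$ in expectation; then it observes that $\nu(\cH[V_p]) \le \alpha\mu$ forces some matching $M$ of size at most $\alpha\mu$ in $\cH'$ to be \emph{maximal} in $\cH'[V_p]$, bounds each such event by $\prod_{A\in M}p^{|A|}\cdot\Pr(\cH'_M[V_p]=\emptyset)$ and applies Janson's inequality (a lower-tail tool) to the residual hypergraph $\cH'_M$; the goodness of the edges of $M$ controls $\mu_p(\cH')-\mu_p(\cH'_M)$ by $\alpha p\mu + (2\alpha p/\eta)\Delta$, and the union over matchings is absorbed by the identity $\sum_M\prod_{A\in M}p^{|A|} \le e^{\mu}\Pr(\Pois(\mu)\le\alpha\mu)$, which is where the Poisson rate $1-\alpha\log(e/\alpha)$ comes from. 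If you want to salvage your decomposition, you would need to replace the exponential Markov step for $T$ by some deterministic pruning of high-codegree edges in this spirit.
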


\begin{proof}
  An edge $A \in \cH$ will be called \emph{$\eta$-good} if
  \[
    \delta(A) \coloneqq \sum_{\substack{B \in \cH \\ A \neq B, A \cap B \neq \emptyset}} p^{|B \setminus A|} \le \frac{2\Delta}{\eta \mu}.
  \]
  Let $\cH' \subseteq \cH$ be the hypergraph obtained by removing from $\cH$ all non-$\eta$-good edges.  Observe that
  \[
    2\Delta = \sum_{A \in \cH} p^{|A|} \delta(A) \ge \sum_{A \in \cH \setminus \cH'} p^{|A|} \delta(A) \ge \mu_p(\cH \setminus \cH') \cdot \frac{2\Delta}{\eta \mu},
  \]
  which implies that
  \[
    \mu_p(\cH') = \mu_p(\cH) - \mu_p(\cH \setminus \cH') \ge (1 - \eta) \mu.
  \]

  Denote by $\cM$ the collection of all matchings of size at most $\alpha \mu$ in $\cH'$.  Given an $M \in \cM$, let $P_M$ denote the probability that $M$ is a \emph{maximal} matching in $\cH'[V_p]$.  It follows from Janson's inequality that, letting $\cH_M' \coloneqq \cH' - \bigcup M$ be the collection of all edges of $\cH'$ that are disjoint from the union of all edges of $M$,
  \[
    \begin{split}
      P_M & \le \Pr\left(\bigcup M \subseteq V_p \text{ and } \cH_M'[V_p] = \emptyset\right) = \Pr\left(\bigcup M \subseteq V_p\right) \cdot \Pr(\cH_M'[V_p] = \emptyset)\\
      & = \prod_{A \in M} p^{|A|} \cdot \Pr(\cH_M'[V_p] = \emptyset) \le \prod_{A \in M} p^{|A|} \cdot \exp\left(-\mu_p(\cH_M') + \Delta_p(\cH_M')\right).
    \end{split}
  \]
  As, clearly, $\Delta_p(\cH_M') \le \Delta$ and
  \[
    \begin{split}
      \mu_p(\cH') - \mu_p(\cH_M') & \le \sum_{A \in M} \sum_{\substack{B \in \cH' \\ A \cap B \neq \emptyset}} p^{|B|} \le \sum_{A \in M} \left(p^{|A|} + \delta(A) \cdot \max_{\substack{B \in \cH' \\ A \cap B \neq \emptyset}} p^{|B \cap A|}\right) \\
      &  \le \sum_{A \in M} \left(p + \delta(A) \cdot p\right) \le |M| \cdot p \cdot \left(1+\frac{2\Delta}{\eta \mu}\right) \le \alpha p \cdot \mu + \frac{2 \alpha p}{\eta} \cdot \Delta.
    \end{split}
  \]
  We may thus conclude that, for every $M \in \cM$,
  \[
    P_M \le \prod_{A \in M} p^{|A|} \cdot \exp\left(-(1 - \alpha p - \eta) \cdot \mu + \left(1 + \frac{2\alpha p }{\eta}\right)\cdot \Delta\right).
  \]
  
  Since $\cH' \subseteq \cH$, the inequality $\nu(\cH[V_p]) \le \alpha \mu $ implies that some $M \in \cM$ must be a maximal matching in $\cH'[V_p]$.  Therefore, by the union bound,
  \[
    \Pr\big(\nu(\cH[V_p]) \le \alpha \mu\big) \le \sum_{M \in \cM} \prod_{A \in M} p^{|A|} \cdot \exp\left(-(1 - \alpha p - \eta) \cdot \mu + \left(1 + \frac{2\alpha p }{\eta}\right)\cdot \Delta\right).
  \]
  In order to complete the proof, it suffices to observe that
  \[
    \sum_{M \in \cM} \prod_{A \in M} p^{|A|} \le \sum_{s \le \alpha \mu} \frac{1}{s!} \left(\sum_{A \in \cH'} p^{|A|}\right)^s \le \sum_{s \le \alpha \mu} \frac{\mu^s}{s!} = e^{\mu} \cdot \Pr\big(\Pois(\mu) \le \alpha \mu\big)
  \]
  and invoke \autoref{prop:lower-tail-Poisson}.
\end{proof}

The following two simplified versions of \autoref{thm:Janson-matchings} will be more convenient to use in our arguments.  The first version, which yields an essentially optimal upper bound on the probability that $\nu(\cH[V_p]) \ll \mu_p(\cH)$ in the case where $\Delta_p(\cH) \ll \mu_p(\cH)$,  is obtained from \autoref{thm:Janson-matchings} by letting $\alpha = \eta/2 = \gamma^2$.  The second version gives a non-optimal upper bound on the probability that $\cH[V_p]$ contains no large matchings, but it is applicable in the cases where $\Delta_p(\cH) = \Omega(\mu_p(\cH))$.

\begin{cor}
  \label{cor:Janson-matchings}
  Let $\cH$ be a hypergraph on a finite vertex set $V$ and let $p \in [0,1]$.
  For every $\gamma \le 1/10$, writing $\mu$ and $\Delta$ for $\mu_p(\cH)$ and $\Delta_p(\cH)$, we have
  \[
    \Pr\big(\nu(\cH[V_p]) \le \gamma^2 \mu\big) \le \exp\big(-(1 - \gamma)\mu + 2 \Delta\big).
  \]
\end{cor}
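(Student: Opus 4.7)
The plan is to derive \autoref{cor:Janson-matchings} as an immediate specialisation of \autoref{thm:Janson-matchings}: the event $\{\nu(\cH[V_p]) \le \gamma^2 \mu\}$ matches the form required by the theorem upon setting $\alpha = \gamma^2$, so the task reduces to choosing $\eta$ so that the $\mu$-coefficient in the resulting exponent is at least $1 - \gamma$ and the $\Delta$-coefficient is at most $2$. The natural choice, which I would make, is $\eta = 2\gamma^2$.

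With this choice, the coefficient of $\Delta$ becomes $1 + 2\alpha p/\eta = 1 + p \le 2$ immediately, so only the $\mu$-coefficient requires any work. Expanding,
\[
1 - \alpha \log(e/\alpha) - \alpha p - \eta = 1 - \gamma^2\bigl(1 + 2\log(1/\gamma)\bigr) - \gamma^2 p - 2\gamma^2 \ge 1 - 2\gamma^2\bigl(2 + \log(1/\gamma)\bigr),
\]
using $p \le 1$. Everything therefore reduces to verifying that $2\gamma\bigl(2 + \log(1/\gamma)\bigr) \le 1$ for all $\gamma \in (0, 1/10]$, which is a one-line calculus check: the left-hand side is increasing on $(0,1)$ (its derivative is $2 + 2\log(1/\gamma) > 0$), and at $\gamma = 1/10$ it equals $0.2 \cdot (2 + \log 10) < 1$.

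There is really no obstacle here; the only conceptual content is the choice of parameters. Taking $\alpha = \gamma^2$ makes the $\alpha \log(e/\alpha)$ contribution of order $\gamma^2 \log(1/\gamma) = o(\gamma)$, which is precisely what allows the linear $\gamma$ on the right-hand side to absorb all the correction terms, and taking $\eta$ of the same order as $\alpha$ keeps the ratio $\alpha p / \eta$ bounded, which is what ensures the $\Delta$-coefficient remains a fixed constant.
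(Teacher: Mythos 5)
Your proof is correct and is exactly the paper's route: the paper obtains \autoref{cor:Janson-matchings} from \autoref{thm:Janson-matchings} by the same substitution $\alpha = \eta/2 = \gamma^2$, and your verification that $2\gamma(2+\log(1/\gamma)) \le 1$ on $(0,1/10]$ supplies the elementary check the paper leaves implicit.
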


\begin{cor}
  \label{cor:extended-Janson-matchings}  
  Let $\cH$ be a hypergraph on a finite vertex set $V$ and let $p \in [0,1]$.
  Writing $\mu$ and $\Delta$ for $\mu_p(\cH)$ and $\Delta_p(\cH)$ and letting $\Lambda \coloneqq \min\{\mu, \mu^2 / \Delta\}$, we have
  \[
    \Pr\big(\nu(\cH[V_p]) \le \Lambda/1000\big) \le \exp(-\Lambda/10).
  \]
\end{cor}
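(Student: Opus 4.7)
The plan is to apply \autoref{thm:Janson-matchings} not to $\cH$ itself, but to a carefully chosen (deterministic) subhypergraph $\cH^* \subseteq \cH$ for which $\Delta_p(\cH^*)$ is substantially smaller than $\mu_p(\cH^*)$. First, I would consider the random subhypergraph $\cH^*$ obtained from $\cH$ by retaining each edge independently with probability $q \coloneqq \min\{1, \mu/(4\Delta)\}$, noting that $\Ex[\mu_p(\cH^*)] = q\mu$ and $\Ex[\Delta_p(\cH^*)] = q^2\Delta$. A short calculation---splitting into the two cases $q=1$ (which forces $\Delta \le \mu/4$, hence $\Lambda = \mu$ and the value equals $\mu - 2\Delta \ge \mu/2$) and $q<1$ (in which the value equals $\mu^2/(8\Delta)$, which is always at least $\Lambda/8$)---shows that
\[
  \Ex\bigl[\mu_p(\cH^*) - 2\Delta_p(\cH^*)\bigr] = q\mu - 2q^2\Delta \ge \Lambda/8.
\]
The probabilistic method therefore yields a \emph{deterministic} $\cH^* \subseteq \cH$ satisfying $\mu_p(\cH^*) - 2\Delta_p(\cH^*) \ge \Lambda/8$; in particular, $\mu_p(\cH^*) \ge \Lambda/8$.

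The second step is to apply \autoref{thm:Janson-matchings} to this $\cH^*$ with the specific choices $\alpha = 1/125$ and $\eta = 1/25$. Using $p \le 1$ and the numerical estimate $\alpha\log(e/\alpha) \le 0.05$, a routine computation shows that
\[
  A \coloneqq 1 - \alpha\log(e/\alpha) - \alpha p - \eta \ge 9/10 \quad\text{and}\quad B \coloneqq 1 + 2\alpha p/\eta \le 7/5,
\]
so $B/A \le 2$. Combined with the gap $\mu_p(\cH^*) - 2\Delta_p(\cH^*) \ge \Lambda/8$ from the first step, the Janson exponent rearranges as
\[
  -A\mu_p(\cH^*) + B\Delta_p(\cH^*) = -A\bigl(\mu_p(\cH^*) - (B/A)\Delta_p(\cH^*)\bigr) \le -A\bigl(\mu_p(\cH^*) - 2\Delta_p(\cH^*)\bigr) \le -\tfrac{9}{10} \cdot \tfrac{\Lambda}{8} \le -\Lambda/10.
\]
Meanwhile $\alpha\mu_p(\cH^*) \ge (1/125)(\Lambda/8) = \Lambda/1000$, so \autoref{thm:Janson-matchings} delivers $\Pr\bigl(\nu(\cH^*[V_p]) \le \Lambda/1000\bigr) \le \Pr\bigl(\nu(\cH^*[V_p]) \le \alpha\mu_p(\cH^*)\bigr) \le \exp(-\Lambda/10)$.

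The proof closes via monotonicity: since $\cH^* \subseteq \cH$ forces $\nu(\cH^*[V_p]) \le \nu(\cH[V_p])$, we conclude $\Pr(\nu(\cH[V_p]) \le \Lambda/1000) \le \exp(-\Lambda/10)$, as desired. The only delicate point is the simultaneous choice of $\alpha$ and $\eta$: we need $B/A \le 2$ (so that the gap guaranteed by sparsification survives the replacement of $2$ by $B/A$ in the Janson exponent) \emph{and} $\alpha \ge 1/125$ (so that the threshold $\alpha\mu_p(\cH^*) \ge \alpha\Lambda/8$ still covers the target $\Lambda/1000$). The values $\alpha = 1/125$ and $\eta = 1/25$ meet both constraints with a modest amount of slack, and the rest of the argument is bookkeeping.
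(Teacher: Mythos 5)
Your proposal is correct and follows essentially the same route as the paper: both sparsify $\cH$ by retaining edges independently with probability roughly $\mu/(4\Delta)$, use the probabilistic method to extract a deterministic subhypergraph on which the Janson exponent $\mu^* - 2\Delta^*$ is $\Omega(\Lambda)$, then apply \autoref{thm:Janson-matchings} (the paper via \autoref{cor:Janson-matchings} with $\gamma = 1/10$, you directly with $\alpha=1/125$, $\eta=1/25$), and finish by monotonicity of the matching number. The only cosmetic differences are that you fold the $\Delta \le \mu/4$ case into the sparsification via $q = \min\{1, \mu/(4\Delta)\}$ whereas the paper handles it separately, and your Janson constants are chosen slightly differently so the numerics land on $\Lambda/1000$ directly; neither changes the substance of the argument.
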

\begin{proof}
  Assume first that $\Delta \le \mu/4$.  Since $\Lambda \le \mu$, it follows from~\autoref{cor:Janson-matchings} that
  \[
    \Pr\big(\nu(\cH[V_p]) \le \Lambda/100\big)
    \le \exp(-9\mu/10 + 2\Delta)
    \le \exp(-2\mu/5) \le \exp(-\Lambda/10).
  \]
  Assume now that $\Delta > \mu/4$, so that $q \coloneqq \mu/(4\Delta) < 1$.  Let $\cH_q$ be the random hypergraph obtained from $\cH$ by retaining each of its edges independently with probability $q$ and define the random variables $\mu' \coloneqq \mu_p(\cH_q)$ and $\Delta' \coloneqq \Delta_p(\cH_q)$.  By~\autoref{cor:Janson-matchings}, 
  \begin{equation}
    \label{eq:Janson-random-subhypergraph}
    \Pr\big(\nu(\cH_q[V_p]) \le \mu'/100 \mid \cH_q \big) \le \exp(-9\mu'/10 + 2\Delta').
  \end{equation}
  The key observation is that
  \[
    \Ex\left[\frac{9\mu'}{10} - 2\Delta'\right] = \frac{9q\mu}{10} - 2q^2\Delta = \left(\frac{9}{40} - \frac{2}{16}\right) \cdot \frac{\mu^2}{\Delta} = \frac{\mu^2}{10\Delta} \ge \frac{\Lambda}{10}.
  \]
  Since $\nu(\cH_q[V_p]) \le \nu(\cH[V_p])$ with probability one, evaluating~\eqref{eq:Janson-random-subhypergraph} on the (nonempty) event that $\mu' \ge {9\mu'}{10} - 2\Delta' \ge \Lambda/10$ gives the claimed inequality.
\end{proof}

\subsection{An upper-tail inequality}
\label{sec:upper-tail-inequality}

Our second tool is an upper-tail inequality for the number of edges that a binomial random subset induces in a uniform hypergraph.  Roughly speaking, it states that the probability that a $p$-random subset of vertices of an $n$-vertex, $\ell$-uniform hypergraph with $o(n^\ell)$ edges induces $\Omega(n^\ell p^\ell)$ edges is exponentially small in $np$.
Even though an estimate on this probability can be easily derived from~\cite[Lemma~3.6]{BalMorSamWar16}, which is a respective estimate for the hypergeometric distribution, we supply a simple, self-contained proof that is based on an elegant argument of Janson, Oleszkiewicz, and Ruci\'nski~\cite{JanOleRuc04} (see also~\cite{HarMouSam22}).

\begin{lemma}
  \label{lemma:hypergraphs_neighbors}
  For every $\alpha > 0$ and positive integer $\ell$, there is a $\rho > 0$ such that the following holds for every $n$-vertex, $\ell$-uniform hypergraph $\cH$ with $e(\cH) \le \rho n^\ell$.  For every $p \in [0,1]$, letting $R \sim V(\cH)_p$, we have
  \[
    \Pr\left(e(\cH[R]) \ge \alpha \cdot (np)^\ell\right) \le \exp\left(-\rho np\right).
  \]
\end{lemma}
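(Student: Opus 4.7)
The plan is to apply the moment method to $X := e(\cH[R]) = \sum_{A \in \cH} \1[A \subseteq R]$. With $k := \lceil \rho np \rceil$, I would bound $\Ex[X^k]$ and conclude via Markov's inequality that
\[
  \Pr\bigl(X \ge \alpha (np)^\ell\bigr) \le \frac{\Ex[X^k]}{(\alpha(np)^\ell)^k}.
\]
Expanding as an ordered sum,
\[
  \Ex[X^k] = \sum_{(A_1, \ldots, A_k) \in \cH^k} p^{|U_k|}, \quad \text{where } U_i := A_1 \cup \cdots \cup A_i,
\]
I would evaluate the inner summations iteratively, bounding for each $i$ the quantity $\sum_{A_i \in \cH} p^{|A_i \setminus U_{i-1}|}$ by splitting into a \emph{disconnected} part ($A_i \cap U_{i-1} = \emptyset$) and a \emph{connected} part ($A_i \cap U_{i-1} \ne \emptyset$). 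The disconnected part is at most $e(\cH) p^\ell \le \rho (np)^\ell$, directly from the sparsity assumption. For the connected part, I would further decompose by $\delta := |A_i \setminus U_{i-1}| \in \{0, \ldots, \ell-1\}$, using the crude combinatorial bound that the number of such $A_i$ is at most $\binom{|U_{i-1}|}{\ell-\delta} n^\delta$, together with the fact that $|U_{i-1}| \le \ell(k-1) = O(\rho np)$ (much smaller than $np$ as long as $\rho \ll 1/\ell$), to see that the resulting sum $\sum_\delta |U_{i-1}|^{\ell-\delta}(np)^\delta$ is dominated by its $\delta = \ell-1$ term and bounded by $O_\ell(\rho (np)^\ell)$.

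Iterating yields $\Ex[X^k] \le (C_\ell \rho (np)^\ell)^k$ for some $C_\ell = C_\ell(\ell)$, and Markov then gives $\Pr(X \ge \alpha(np)^\ell) \le (C_\ell \rho/\alpha)^k$. Choosing $\rho$ small enough that $C_\ell \rho/\alpha \le 1/e$ makes this at most $e^{-k} \le \exp(-\rho np)$, as required. The residual range $\rho np < 1$ (where the iteration degenerates) is handled by the first-moment Markov bound $\Pr(X \ge \alpha(np)^\ell) \le \Ex[X]/(\alpha(np)^\ell) = e(\cH)/(\alpha n^\ell) \le \rho/\alpha$, which is at most $\exp(-\rho np)$ for $\rho \le \alpha/e$.

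The main difficulty lies in the connected contribution: it cannot be controlled through the sparsity hypothesis $e(\cH) \le \rho n^\ell$ alone, since arbitrary $\ell$-subsets meeting $U_{i-1}$ need not belong to $\cH$. The saving instead comes from the fact that $|U_{i-1}|$ itself is small, of order $\ell k = O(\rho np)$, and this forces the choice $k \asymp \rho np$: large enough that a constant-factor edge in Markov's inequality translates into an $\exp(-\rho np)$ tail, yet small enough that each factor in the expansion of $\Ex[X^k]$ contributes only $O(\rho (np)^\ell)$ rather than $O((np)^\ell)$. Balancing these two requirements on $k$ and $\rho$ is the crux of the argument.
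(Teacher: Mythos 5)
Your proposal is correct and follows essentially the same route as the paper's own proof: a $k$-th moment bound with $k \asymp \rho np$, where each peeled-off factor is split into a disconnected contribution controlled by the sparsity hypothesis $e(\cH)\le\rho n^\ell$ and a connected contribution controlled purely by $|U_{i-1}|=O(\rho n p)$ (the paper indexes the latter by $|A_i\cap U_{i-1}|$ rather than $|A_i\setminus U_{i-1}|$, but the counting is identical). The paper packages the iteration as $\Ex[X^k]\le \Ex[X^{k-1}]\cdot\max_S\Ex[X\mid S\subseteq R]$ and folds the small-$np$ case into the general bound, but these are cosmetic differences from what you describe.
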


\begin{proof}
  Given an $\alpha > 0$ and a positive integer $\ell$, set $\rho \coloneqq \frac{\min\{\alpha,1\}}{(2\ell+1)e}$ and let $\cH$ be an $n$-vertex, $\ell$-uniform hypergraph with $e(\cH) \le \rho n^\ell$.  Suppose that $R \sim V(\cH)_p$ and let $X \coloneqq e(\cH[R])$.  If we denote, for each $A \in \cH$, the indicator random variable of the event $A \subseteq R$ by $Y_A$, we may write, for every positive integer $k$,
  \begin{equation}
    \label{eq:X-moment}
    \begin{split}
      \Ex[X^k] & = \sum_{A_1, \dotsc, A_k \in \cH} \Ex[Y_{A_1} \dotsb Y_{A_k}] \\
      & = \sum_{A_1, \dotsc, A_{k-1} \in \cH} \Ex[Y_{A_1} \dotsb Y_{A_{k-1}}] \sum_{A_k \in \cH} \Ex[Y_{A_k} \mid Y_{A_1} \dotsb Y_{A_{k-1}} = 1] \\
      & = \sum_{A_1, \dotsc, A_{k-1} \in \cH}  \Ex[Y_{A_1} \dotsb Y_{A_{k-1}}] \cdot \Ex[X \mid Y_{A_1} \dotsb Y_{A_{k-1}} = 1] \\
      & \le \Ex[X^{k-1}] \cdot \max_{A_1, \dotsc, A_{k-1} \in \cH} \Ex[X \mid Y_{A_1} \dotsb Y_{A_{k-1}} = 1].
    \end{split}
  \end{equation}
  The following claim bounds the conditioned expectation in the above inequality:
  \begin{claim}
    \label{claim:high-moment-bound-X}
    For every $S \subseteq V(\cH)$ of size at most $\rho \ell np$,
    \[
      \Ex[X \mid S \subseteq R] \le (2\ell+1)\rho \cdot (np)^\ell.
    \]
  \end{claim}
  \begin{proof}
    Let $S$ be an arbitrary set of at most $\rho \ell n p$ vertices of $\cH$.  Since every $i$-element subset of $S$ is contained in at most $n^{\ell-i}$ edges of $\cH$, we have
    \[
      \Ex[X \mid S \subseteq R] - \Ex[X ]\le \sum_{i=1}^\ell \binom{|S|}{i} n^{\ell - i} p^{\ell - i} \le (np)^\ell \cdot \sum_{i = 1}^\ell \left(\frac{|S|}{np}\right)^i \le \frac{\rho \ell}{1-\rho \ell} \cdot (np)^\ell,
    \]
    where the last inequality follows from our assumption that $|S| \le \rho \ell np$.  Since $\rho \le 1/(2\ell)$, we have
    \[
      \Ex[X \mid S \subseteq R] \le \Ex[X] + \frac{\rho \ell}{1-\rho \ell} \cdot (np)^\ell \le e(\cH) \cdot p^\ell + 2\rho \ell \cdot (np)^\ell \le (2\ell+1) \rho \cdot (np)^\ell,
    \]
    as claimed.
  \end{proof}
  A straightforward inductive argument turns~\eqref{eq:X-moment} and \autoref{claim:high-moment-bound-X} into the bound
  \[
    \Ex[X^k] \le \big((2\ell+1) \rho\big)^k \cdot (np)^{\ell k},
  \]
  valid for all $k \le \lceil \rho n p \rceil$.  By Markov's inequality, letting $k = \lceil \rho n p\rceil$,
  \[
    \Pr\left(X \ge \alpha \cdot (np)^\ell\right) \le \frac{\Ex[X^k]}{\alpha^k \cdot (np)^{k\ell}} \le \left(\frac{(2\ell+1)\rho}{\alpha}\right)^k \le \exp(-\rho np),
  \]
  as claimed.
\end{proof}

\section{Typical properties of random graphs}
\label{sec:random-graphs}

In this section, we establish several key properties concerning the distribution of edges and of copies of various subgraphs of $H$ that the binomial random graph $G_{n,p}$ possesses with probability very close to one.
We begin with a non-probabilistic inequality that proves useful while estimating various expectations throughout the paper.

\begin{lemma}
  \label{lemma:2-balanced-condition}
  Let $H$ be a $2$-balanced graph and suppose that $p \ge C n^{-1/m_2(H)}$ for some $C > 0$.  Then,
  $n^{v_{H'}-2} p^{e_{H'}-1} \ge C^{e_{H'} - 1}$ for every nonempty subgraph $H' \subseteq H$.  Moreover, if $H$ is strictly $2$-balanced, then there exists a $\lambda > 0$ that depends only on $H$ such that $n^{v_{H'}-2} p^{e_{H'}-1} \ge C^{e_{H'} - 1} n^{\lambda}$ for every $H' \subseteq H$ with $1 < e_{H'} < e_H$.
\end{lemma}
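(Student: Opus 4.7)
The plan is to unpack the definitions of $m_2(H)$ and of (strict) $2$-balancedness and observe that the claimed inequalities are immediate consequences.

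For the first part, I would split on the number of edges of $H'$. When $e_{H'} = 1$ the inequality reads $n^{v_{H'}-2} \ge 1$, which is trivial since $v_{H'} \ge 2$. When $e_{H'} \ge 2$, the definition~\eqref{eq:2-density} of $m_2(H)$ immediately gives
\[
  \frac{e_{H'}-1}{v_{H'}-2} \le m_2(H),
  \qquad \text{i.e.,} \qquad
  v_{H'} - 2 \ge \frac{e_{H'}-1}{m_2(H)}.
\]
Combined with the assumption $p \ge Cn^{-1/m_2(H)}$, this yields
\[
  n^{v_{H'}-2} p^{e_{H'}-1} \ge \bigl(n^{1/m_2(H)} p\bigr)^{e_{H'}-1} \ge C^{e_{H'}-1}.
\]
Note that $2$-balancedness is not actually used for this part -- the bound holds for every graph $H$.

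For the second part, the strict $2$-balancedness hypothesis says that the maximum in~\eqref{eq:2-density} is attained uniquely at $F = H$. Consequently, for every $H' \subseteq H$ with $1 < e_{H'} < e_H$ we have the strict inequality $(e_{H'}-1)/(v_{H'}-2) < m_2(H)$. Since there are only finitely many such subgraphs $H'$, there exists a constant $\lambda = \lambda(H) > 0$ satisfying
\[
  v_{H'} - 2 \ge \frac{e_{H'}-1}{m_2(H)} + \lambda
  \qquad \text{for every $H' \subseteq H$ with $1 < e_{H'} < e_H$.}
\]
Plugging this into the same computation as before gives $n^{v_{H'}-2} p^{e_{H'}-1} \ge C^{e_{H'}-1} n^\lambda$, as required.

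There is no real obstacle here; the only care needed is to handle the edge cases ($e_{H'} = 1$ and $e_{H'} = e_H$) separately so that the strict inequality from strict $2$-balancedness is only invoked for the intermediate subgraphs, and to exploit the finiteness of the collection $\{H' \subseteq H : 1 < e_{H'} < e_H\}$ to upgrade the pointwise strict inequalities into a uniform gap $\lambda > 0$.
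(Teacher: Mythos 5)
Your proof is correct and follows essentially the same route as the paper: unpack the definition of $m_2(H)$ to get $v_{H'}-2 \ge (e_{H'}-1)/m_2(H)$, plug in the assumed lower bound on $p$, and for the strict case upgrade the pointwise strict inequalities to a uniform $\lambda > 0$ using the finiteness of the family of intermediate subgraphs. Your side remark that $2$-balancedness is not actually used for the first inequality is also accurate.
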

\begin{proof}
  Let $H'$ be a nonempty subgraph of $H$.  Since the assertion of the lemma holds vacuously if $e_{H'} = 1$, we may assume that $e_{H'}>1$.  By our assumption on $p$,
  \[
    n^{v_{H'}-2} p^{e_{H'}-1} \ge n^{v_{H'}-2} \left(C n^{-\frac{1}{m_2(H)}}\right)^{e_{H'}-1} = C^{e_{H'} - 1} n^{v_{H'}-2 - \frac{e_{H'}-1}{m_2(H)}}.
  \]
  Now, recall that the definition of $m_2(H)$ implies that $v_{H'} - 2 \ge \frac{e_{H'}-1}{m_2(H)}$ and that, when $H$ is strictly $2$-balanced, this inequality is strict unless $H' = H$. Thus, the exponent of $n$ is non-negative and it is positive if $H$ is strictly $2$-balanced and $1 < e_{H'} < e_H$.
\end{proof}

\subsection{Distribution of edges}
\label{sec:distribution-edges}

The next lemma states that the number of edges in the intersection of $G_{n,p}$ with the graphs $\ext(\Pi)$ and $\int(\Pi)$ is typically close to its expectation not only for every fixed $r$-cut $\Pi$, but in fact for all sequences $\Pi$ of pairwise disjoint sets, irrespective of their length, simultaneously.

\begin{lemma}
  \label{lemma:cuts-concentration}
  If $p \gg n^{-1} \log n$, then a.a.s.\ $G \sim G_{n,p}$ satisfies the following for all $m$ and every sequence of pairwise disjoint sets $\Pi = (V_1, \dotsc, V_m)$:
  \[
    e(G \cap \ext(\Pi)) = \left(\ext(\Pi) \pm o(n^2)\right) \cdot p
    \quad
    \text{and}
    \quad
    e(G \cap \int(\Pi)) = \left(\int(\Pi) \pm o(n^2)\right) \cdot p.
  \]
\end{lemma}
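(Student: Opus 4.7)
The plan is Chernoff plus a union bound. The key observation is that both $\ext(\Pi)$ and $\int(\Pi)$ depend on $\Pi = (V_1, \dotsc, V_m)$ only through the unordered partition of $W \coloneqq V_1 \cup \dotsb \cup V_m$ into its nonempty parts: permuting the $V_i$'s or inserting/deleting empty parts leaves both edge sets unchanged. One may therefore restrict to sequences with $m \le n$ and all $V_i$ nonempty; the number of distinct pairs $\bigl(\ext(\Pi), \int(\Pi)\bigr)$ produced in this way is at most the number of functions $[n] \to \{0, 1, \dotsc, n\}$, where $0$ codes ``outside $W$'' and $i \in [n]$ codes ``in part $V_i$'', namely $(n+1)^n = e^{O(n \log n)}$.

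For any such $\Pi$, the set $\ext(\Pi)$ is a deterministic family of at most $\binom{n}{2}$ pairs, so $e(G \cap \ext(\Pi)) \sim \Bin(|\ext(\Pi)|, p)$, and analogously for $e(G \cap \int(\Pi))$. Fixing $\eps > 0$, Chernoff's inequality yields
\[
  \Pr\!\left(\bigl|e(G \cap \ext(\Pi)) - \ext(\Pi) \cdot p\bigr| > \eps n^2 p\right) \le 2\exp(-c_\eps n^2 p),
\]
with $c_\eps > 0$ depending only on $\eps$, and the same for $\int(\Pi)$. Union-bounding over the at most $(n+1)^n$ relevant partitions and the two events gives total failure probability at most $\exp\bigl(O(n \log n) - c_\eps n^2 p\bigr)$, which vanishes since $n^2 p \gg n \log n$ by the hypothesis $p \gg n^{-1} \log n$.

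To convert ``deviation at most $\eps n^2 p$ for each fixed $\eps$'' into ``deviation $o(n^2) \cdot p$'', one picks a sequence $\eps_n \to 0$ slowly enough that $\eps_n^2 \cdot np/\log n \to \infty$; such a sequence exists since $np/\log n \to \infty$. The only genuine step is the counting reduction: the family of ordered sequences of disjoint subsets of $[n]$ is unbounded a priori (since $m$ is not fixed), and it must be compressed to $e^{O(n \log n)}$ equivalence classes before any Chernoff-style union bound can succeed. Everything else is a standard concentration estimate; I do not foresee a genuine obstacle.
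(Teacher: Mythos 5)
Your proof is correct and follows essentially the same route as the paper's: reduce the a priori unbounded family of sequences to at most $(n+1)^n$ relevant configurations, apply a Chernoff/binomial tail bound to each fixed $\Pi$ with failure probability $\exp(-\Omega(n^2p))$, and union-bound using $n^2p \gg n\log n$. The only cosmetic differences are that the paper treats $\ext(\Pi)$ via the identity $\ext(\Pi)\cup\int(\Pi)=\int(\{V_1\cup\dotsb\cup V_m\})$ rather than bounding it directly, and that you spell out the $\eps_n\to 0$ bookkeeping which the paper leaves implicit.
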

\begin{proof}
  Note first that, for every sequence $\Pi = (V_1, \dotsc, V_m)$ of pairwise disjoint sets of vertices, we have $\ext(\Pi) \cup \int(\Pi) = \int(\{V_1 \cup \dotsb \cup V_m\})$.
  Further, there are at most $(n+1)^n$ different such sequences.
  Consequently, it suffices to show that, for each sequence $\Pi$ and all fixed $\zeta > 0$,
  \[
    \Pr\left(\big|e(G \cap \int(\Pi)) - e(\int(\Pi)) \cdot p\big| \ge \zeta n^2 p\right) \le n^{-2n}
  \]
  for all sufficiently large $n$.
  Since $e(G \cap \int(\Pi)) \sim \Bin(e(\int(\Pi)), p)$, the above estimate easily follows from standard tail estimates for binomial random variables and our assumption that $n^2 p \gg n \log n$.
\end{proof}

\subsection{The number of subgraphs leaning on an edge or a vertex}
\label{sec:numb-subgr-lean}

Our next lemma is an upper-tail estimate for the number of copies of all graphs of the form $H \setminus f$, where $f \in H$, that contain a given edge of (resp.\ a given vertex of) $G_{n,p}$.
Its proof utilises the high-moment argument of Janson, Oleszkiewicz, and Ruci\'nski~\cite{JanOleRuc04}.
We recall that $\partial_e$ and $\partial$ bind stronger than the operation of taking induced subhypergraphs and thus $\partial\partial_e\cH[G_{n,p}] = (\partial\partial_e\cH)[G_{n,p}]$ and $\partial \cH_v [G_{n,p}] = (\partial \cH_v)[G_{n,p}]$.

\begin{lemma}
  \label{lemma:partial}
  Let $H$ be a strictly $2$-balanced graph and suppose that $p \ge n^{-1/m_2(H)}$.
  \begin{enumerate}[{label=(\roman*)}]
  \item
    \label{item:partial-edge}
    For every edge $e \in K_n$, letting $\cH$ denote all copies of $H$ in $K_n$,
    \[
      \Pr\left(\big|\partial\partial_e\cH[G_{n,p}]\big| \ge 4e_H^2n^{v_H-2}p^{e_H-2}\right) \le n^{-6}.
    \]
  \item
    \label{item:partial-vertex}
    For every vertex $v$, letting $\cH_v$ denote all copies of $H$ in $K_n$ that contain $v$,
    \[
      \Pr\left(\big|\partial\cH_v[G_{n,p}]\big| \ge 2v_He_Hn^{v_H-1}p^{e_H-1}\right) \le n^{-6}.
    \]
  \end{enumerate}
\end{lemma}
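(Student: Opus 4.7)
The plan is to upper-bound both quantities by sums of indicators over rooted embeddings and then bound the resulting upper tails via a high-moment Markov argument in the spirit of Janson, Oleszkiewicz, and Ruci\'nski~\cite{JanOleRuc04}, mirroring the strategy behind \autoref{lemma:hypergraphs_neighbors}.

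For part~(i), let
\[
  \tilde X_e \coloneqq \sum_{(f_1, f_2, \phi)} \1\big[\phi(H \setminus \{f_1, f_2\}) \subseteq G_{n,p}\big],
\]
where the sum is over all triples with distinct $f_1, f_2 \in E(H)$ and an injection $\phi : V(H) \to V(K_n)$ satisfying $\phi(f_1) = e$. Every element of $\partial\partial_e \cH[G_{n,p}]$ is captured by at least one such triple, and a direct count yields $\Ex[\tilde X_e] \le 2 e_H(e_H - 1) n^{v_H - 2} p^{e_H - 2}$, which is safely less than half of the target $T \coloneqq 4 e_H^2 n^{v_H - 2} p^{e_H - 2}$. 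It thus suffices to show $\Pr[\tilde X_e \ge T] \le n^{-6}$.

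The concentration step iterates $\Ex[\tilde X_e^k] \le M_{k-1}^k$ where
\[
  M_{k-1} \coloneqq \max\big\{\Ex[\tilde X_e \mid S \subseteq G_{n,p}] : S \subseteq E(K_n),\, |S| \le (k-1)(e_H - 2)\big\},
\]
and then applies Markov in the form $\Pr[\tilde X_e \ge T] \le (M_{k-1}/T)^k$ with $k \coloneqq K \log n$ for a large constant $K = K(H)$. The conditional expectation is bounded by partitioning triples according to the subgraph
\[
  J \coloneqq \{f_1\} \cup \{f \in E(H) \setminus \{f_1, f_2\} : \phi(f) \in S\} \subseteq H,
\]
which satisfies $f_1 \in J$ and $f_2 \in E(H) \setminus J$ (so in particular $J \subsetneq H$). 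The contribution of each such $J$ admits the bound
\[
  C_H \cdot (|S|+1)^{v_J - 2} \cdot n^{v_H - v_J} \cdot p^{e_H - e_J - 1} \;=\; C_H \, n^{v_H - 2} p^{e_H - 2} \cdot \frac{(|S|+1)^{v_J - 2}}{n^{v_J - 2} p^{e_J - 1}}.
\]
For $J = \{f_1\}$ this is $(1 + o(1)) \Ex[\tilde X_e]$. For every other admissible $J$ we have $2 \le e_J < e_H$, so \autoref{lemma:2-balanced-condition} supplies $n^{v_J - 2} p^{e_J - 1} \ge C^{e_J - 1} n^\lambda$ for some $\lambda = \lambda(H) > 0$; since $|S| = O(\log n)$ when $k = K \log n$, the sum of all non-trivial contributions is at most $O((\log n)^{v_H}/n^\lambda) = o(1)$ times the trivial one. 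Consequently $M_{k-1}/T \le (e_H - 1)/(2 e_H) + o(1) < 1/2$ for large $n$, yielding $\Pr[\tilde X_e \ge T] \le (1/2)^{K \log n} = n^{-K \log 2} \le n^{-6}$ once $K \ge 9$.

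Part~(ii) is handled identically with the random variable $\tilde Y_v \coloneqq \sum_{(v_0, f_1, \phi)} \1[\phi(H \setminus f_1) \subseteq G_{n,p}]$, summed over $v_0 \in V(H)$, $f_1 \in E(H)$, and injections $\phi$ with $\phi(v_0) = v$; this satisfies $\Ex[\tilde Y_v] \le v_H e_H n^{v_H - 1} p^{e_H - 1}$, again half of the target. The main obstacle in both cases is the subgraph-by-subgraph decomposition of the conditional expectation; this is precisely where strict $2$-balancedness of $H$ is invoked, via the polynomial slack $n^\lambda$ supplied by \autoref{lemma:2-balanced-condition}, while all remaining steps are routine Markov-moment bookkeeping.
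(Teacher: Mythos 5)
Your proposal is correct and follows essentially the same route as the paper: a high-moment Markov bound in the style of Janson--Oleszkiewicz--Ruci\'nski, with the conditional expectation controlled by decomposing rooted embeddings according to the subgraph $J$ landing in the conditioning set and invoking \autoref{lemma:2-balanced-condition} (strict $2$-balancedness) to make every nontrivial term polynomially small. The only cosmetic difference is that you count labelled embeddings $\tilde X_e \ge |\partial\partial_e\cH[G_{n,p}]|$ rather than the set itself, which is harmless since your expectation bound still sits below half the stated threshold.
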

\begin{proof}
  Denote, for every edge $e \in K_n$ and every vertex $v \in V(K_n)$,
  \[
    X_e \coloneqq \big|\partial\partial_e\cH[G_{n,p}]\big|
    \qquad
    \text{and}
    \qquad
    X_v \coloneqq \big|\partial\cH_v[G_{n,p}]\big|
  \]
  and note that $\Ex[X_e] \le 2e_H^2n^{v_H-2}p^{e_H-1}$ and $\Ex[X_v] \le v_He_Hn^{v_H-1}p^{e_H-1}$.
  In particular, it suffices to show that the probability that either of $X_e$ or $X_v$ exceeds twice its expectation is at most $n^{-6}$.
  Since Markov's inequality implies that, for every positive integer $\ell$ and each nonnegative random variable $X$,
  \[
    \Pr\big(X \ge 2\Ex[X]\big) = \Pr\big(X^\ell \le (2\Ex[X])^\ell\big)\le \frac{\Ex[X^\ell]}{(2\Ex[X])^\ell},
  \]
  it suffices to show that $\Ex[X_e^\ell] \le (3\Ex[X_e]/2)^{\ell}$ for every $e \in K_n$ and all $\ell = O(\log n)$, and that the analogous bound holds for each $X_v$.

  To this end, for every $\omega \subseteq K_n$, write $Y_\omega$ for the indicator random variable of the event that $\omega \subseteq G_{n,p}$.
  We may now write
  \[
    X_e = \sum_{\omega \in \partial\partial_e \cH}Y_{\omega}
    \qquad
    \text{and}
    \qquad
    X_v = \sum_{\omega \in \partial \cH_v} Y_{\omega},
  \]
  and, further, for every positive integer $\ell$ and every edge $e$,
  \begin{equation}
    \label{eq:Xe-moment}
    \begin{split}
      \Ex[X_e^\ell] &= \sum_{\omega_1, \dotsc, \omega_\ell \in \partial\partial_e \cH} \Ex[Y_{\omega_1} \dotsm Y_{\omega_\ell}] \\
      &= \sum_{\omega_1, \dotsc, \omega_{\ell-1} \in \partial\partial_e\cH} \Ex[Y_{\omega_1} \dotsm Y_{\omega_{\ell-1}}] \sum_{\omega_\ell \in \partial\partial_e\cH} \Ex[Y_{\omega_\ell} \mid Y_{\omega_1} \dotsm Y_{\omega_{\ell-1}} = 1] \\
      &= \sum_{\omega_1, \dots, \omega_{\ell-1} \in \partial\partial_e\cH} \Ex[Y_{\omega_1} \dotsm Y_{\omega_{\ell-1}}] \cdot \Ex[X_e \mid \omega_1 \cup \dotsb \cup \omega_{\ell-1} \subseteq G_{n,p}];
    \end{split}
  \end{equation}
  similarly, for every vertex $v$,
  \begin{equation}
    \label{eq:Xv-moment}
    \Ex[X_v^\ell] = \sum_{\omega_1, \dotsc, \omega_{\ell-1} \in \partial\cH_v}
    \Ex[Y_{\omega_1} \dotsb Y_{\omega_{\ell-1}}] \cdot \Ex[X_v \mid \omega_1 \cup \dotsb \cup \omega_{\ell-1} \subseteq G_{n,p}].
  \end{equation}
  Our next two claims bound the conditioned expectations in the above sums.
  \begin{claim}
    \label{claim:Xe-seed}
    For any edge $e$ and all $G \subseteq K_n$ with $e(G) = O(\log n)$, 
    \[
      \Ex[X_e \mid G \subseteq G_{n,p}] - \Ex[X_e] \ll \Ex[X_e].
    \]
  \end{claim}
  \begin{claim}
    \label{claim:Xv-seed}
    For any vertex $v$ and all $G \subseteq K_n$ with $e(G) = O(\log n)$, 
    \[
      \Ex[X_v \mid G \subseteq G_{n,p}] - \Ex[X_v] \ll \Ex[X_v].
    \]
  \end{claim}
  \begin{proof}[Proof of \autoref{claim:Xe-seed}]
    Let $G$ be an arbitrary graph with $O(\log n)$ edges.
    For every nonempty $J \subseteq H$, set 
    \[
      \Omega_J \coloneqq \{\omega \in \partial\partial_e\cH \colon (\omega \cap G) \cup e \cong J\}
    \]
    and note that
    \[
      \Ex[X_e \mid G \subseteq G_{n,p}] - \Ex[X_e] \le \sum_{\substack{J \subseteq H \\ 1 < e_J < e_H}} |\Omega_J| \cdot p^{e_H - e_J -1}.
    \]
    It is not hard to see that, for every nonempty $J \subseteq H$,
    \[
      |\Omega_J| \le O\left(e(G)^{e_J - 1} n^{v_H - v_J}\right) = O\left(n^{v_H-v_J} (\log n)^{e_H}\right);
    \]
    indeed, every $\omega \in \Omega_J$ intersects $G$ in $e_J-1$ edges.  We conclude that
    \[
      \Ex[X_e \mid G \subseteq G_{n,p}] - \Ex[X_e] \le n^{v_H-2} p^{e_H-2} \cdot  \sum_{\substack{J \subseteq H \\ 1 < e_J < e_H}} n^{2 - v_J} p^{1 - e_J} \cdot O\big((\log n)^{e_H}\big).
    \]
    Finally, since $H$ is strictly $2$-balanced and $p \ge n^{-1/m_2(H)}$, \autoref{lemma:2-balanced-condition} implies that each term in the above sum is at most $n^{-\lambda}$, for some positive constant $\lambda$ that depends only on $H$.  This implies the claimed estimate, as $\Ex[X_e] = \Theta\left(n^{v_H-2} p^{e_H-2}\right)$.
  \end{proof}
  \begin{proof}[Proof of \autoref{claim:Xv-seed}]
    Let $G$ be an arbitrary graph with $O(\log n)$ edges.
    For every nonempty $J \subseteq H$, set 
    \[
      \Omega_J \coloneqq \{\omega \in \partial\cH_v \colon \omega \cap G \cong J\},
    \]
    where we always include $v$ in the vertex set of $\omega \cap G$ (so that $J$ might contain one isolated vertex).
    Note that
    \[
      \Ex[X_v \mid G \subseteq G_{n,p}] - \Ex[X_v] \le \sum_{\substack{J \subseteq H \\ 1 \le e_J < e_H}} |\Omega_J| \cdot p^{e_H - e_J -1}
    \]
    and, similarly as before, for every $J \subseteq H$,
    \[
      |\Omega_J| \le O\left(e(G)^{e_J} n^{v_H - v_J}\right) = O\left(n^{v_H-v_J} (\log n)^{e_H}\right).
    \]
    We conclude that
    \[
      \Ex[X_v \mid G \subseteq G_{n,p}] - \Ex[X_v] \le n^{v_H-1} p^{e_H-1} \cdot  \sum_{\substack{J \subseteq H \\ 1 < e_J < e_H}} n^{1 - v_J} p^{-e_J} \cdot O\big((\log n)^{e_H}\big).
    \]
    Finally, since $H$ is $2$-balanced and $p \ge n^{-1/m_2(H)}$, \autoref{lemma:2-balanced-condition} implies that each term in the above sum is at most $(np)^{-1}$.
    Since $np \ge n^{1-1/m_2(H)} \ge n^{\lambda}$ for some positive constant $\lambda$ that depends only on $H$ and since $\Ex[X_v] = \Theta\left(n^{v_H-1} p^{e_H-1}\right)$, the claimed estimate follows.
  \end{proof}

  By \autoref{claim:Xe-seed} and~\eqref{eq:Xe-moment}, we conclude that, for every $e \in K_n$ and all $\ell = O(\log n)$,
  \[
    \Ex[X_e^\ell] \le \sum_{\omega_1, \dots, \omega_{\ell-1} \in \partial\partial_e \cH} \Ex\left[Y_{\omega_1} \dotsm Y_{\omega_{\ell-1}}\right] \cdot (1+o(1)) \Ex[X_e]
    \le 3/2 \cdot \Ex[X_e^{\ell-1}] \cdot \Ex[X_e];
  \]
  this implies, via induction on $\ell$, that $\Ex[X_e^\ell] \le (3/2)^\ell \cdot \Ex[X_e]^\ell$ holds for all $\ell = O(\log n)$.
  Analogously, \autoref{claim:Xv-seed} and~\eqref{eq:Xv-moment} yield $\Ex[X_v^\ell] \le (3/2)^\ell \cdot \Ex[X_v]^{\ell}$ for every $v \in V(K_n)$ and all $\ell = O(\log n)$.
\end{proof}

\subsection{Interactions between neighbourhoods}
\label{sec:inter-betw-neighb}

Our next lemma concerns interactions between neighbourhoods of the centre vertices of graphs $Q \in \cQ_H$ such that $Q \subseteq G_{n,p}$.  Roughly speaking, it states that (a.a.s.) these neighbourhoods do not overlap significantly.  The precise statement involves a uniform hypergraph whose edges are subsets of the neighbourhoods of the centre vertices of a $Q \in \cQ_H$ that contain a prescribed number of vertices from every colour class of $Q$.  Suppose that $\vell = (\ell_1, \dotsc, \ell_r) \in \NN^r$ is a vector with $\ell \coloneqq \ell_1 + \dotsb + \ell_r \ge 1$.  For a graph $Q \in \cQ_H$ and a centre vertex $v \in X_Q$, denote by $\cN_Q^\vell(v)$ all the $\ell$-element sets $U$ of vertices such that
\[
  |U \cap N_Q(v) \cap V^k(Q)| = \ell_k \text{ for every } k \in \br{r}.
\]
The following lemma constructs a subhypergraph $\cG \subseteq \cN_Q^\vell \coloneqq \bigcup_{v \in X_Q} \cN_Q^\vell (v)$ whose edges are `well-distributed'.  For a nonnegative integer $j$, we write $\Delta_j(\cG)$ for the maximum number of edges of $\cG$ that contain any given set of $j$ vertices; in particular $\Delta_0(\cG) = e(\cG)$.

\begin{lemma}
  \label{lemma:tilde-G-hypergraph}
  For every $\vell = (\ell_1, \dotsc, \ell_r) \in \NN^r$ with $\ell \coloneqq \ell_1 + \dotsb + \ell_r \ge 1$, there exist positive constants $c$ and $C \ge 1$ that depend only on $\ell$ and $\eta$ such that a.a.s.\ the following holds for every $Q \in \cQ_H$ with $Q \subseteq G_{n,p}$, provided that $p \gg \log n / n$.   There is a hypergraph $\cG \subseteq \cN_Q^\vell$ with the following properties:
  \begin{enumerate}[{label=(\roman*)}]
  \item
    \label{item:tilde-G-hypergraph-edges}
    $e(\cG) \ge c \cdot \min\left\{n^\ell, k(Q) \cdot (np)^\ell \right\}$,
  \item
    \label{item:tilde-G-hypergraph-degrees}
    $\Delta_j(\cG) \le \max\left\{4(np)^{\ell-j},\  C \cdot \frac{e(\cG)}{n^j}\right\}$ for every $j \in \{0, \dotsc, \ell\}$.
  \end{enumerate}
\end{lemma}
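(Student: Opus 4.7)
The plan is to first establish a.a.s.\ codegree-type properties of $G_{n,p}$ and then, for each $Q \in \cQ_H$ with $Q \subseteq G_{n,p}$, to construct $\cG$ deterministically by pruning $\cN_Q^\vell$.

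\smallskip
\emph{Typical properties of $G_{n,p}$.} I would first show that a.a.s.\ the following uniform codegree bound holds: for every $j \in \{1, \dotsc, \ell\}$ and every $j$-element set $S$ of vertices,
\[
  \Bigl|\bigcap_{u \in S} N_{G_{n,p}}(u)\Bigr| \le M_j, \qquad M_j := C_1 \max\{1, np^j\},
\]
for some $C_1 = C_1(\ell)$. When $np^j \ge \log n$ this is a standard Chernoff bound followed by a union bound over $\binom{n}{j}$ sets; when $np^j$ is smaller, the tail inequality $\Pr(\Bin(n, p^j) \ge t) \le (enp^j/t)^t$ combined with the assumption $p \gg \log n/n$ yields the same bound with a larger $C_1$. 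I would also use that a.a.s.\ $\Delta(G_{n,p}) \le 2np$ and, more delicately, a count of the "high-codegree" $j$-sets showing that the number of $S$ with $|N_{G_{n,p}}(S)|$ exceeding a constant multiple of $\max\{1,np^j\}$ is $o(n^j)$.

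\smallskip
\emph{Construction of $\cG$.} Fix $Q \in \cQ_H$ with $Q \subseteq G_{n,p}$. The starting point is the double-counting identity
\[
  \sum_{v \in X_Q} |\cN_Q^\vell(v)| = k(Q) \prod_{k=1}^r \binom{\eta np}{\ell_k} = c_0 \cdot k(Q)(np)^\ell
\]
for some $c_0 = c_0(\eta, \vell) > 0$. Since each $U \in \cN_Q^\vell$ is counted $|N_Q(U) \cap X_Q| \le M_\ell$ times in the sum and since $|\cN_Q^\vell| \le \binom{n}{\ell}$, one obtains the lower bound $e(\cN_Q^\vell) = \Omega(\min\{n^\ell, k(Q)(np)^\ell\})$, where the second piece is tightened by noting that in typical $G_{n,p}$ the average number of centres containing a given $\ell$-set is $\Theta(k(Q) p^\ell)$, not $M_\ell$. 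Analogously, for every $j$-set $S$,
\[
  d_j^{\cN_Q^\vell}(S) \le |N_Q(S) \cap X_Q| \cdot C_0' (np)^{\ell-j} \le C_0' M_j (np)^{\ell-j}.
\]
To satisfy the degree bound in (ii), I would prune from $\cN_Q^\vell$ every edge containing a $j$-set $S$ (for any $1 \le j \le \ell-1$) whose popularity $|N_Q(S) \cap X_Q|$ exceeds a threshold $t_j$ chosen so that $C_0' t_j (np)^{\ell-j}$ does not exceed the allowed bound $\max\{4(np)^{\ell-j}, C e(\cG)/n^j\}$: $t_j$ is taken to be a constant when the first alternative is tight and of order $e(\cG)/(n^j (np)^{\ell-j})$ when the second dominates. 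In the "many centres" regime $k(Q)(np)^\ell \gg n^\ell$, one first subsamples $X_Q$ uniformly down to a subset $X'$ of size $\lceil n^\ell/(np)^\ell \rceil$, restricts $Q$ to $X'$, and applies the same pruning; standard concentration then guarantees $e(\cG) = \Theta(n^\ell)$ and the desired degree bounds.

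\smallskip
\emph{Main obstacle.} The hardest part is controlling the pruning loss: one must pick the threshold $t_j$ small enough to enforce the degree condition yet large enough that the edges containing popular $j$-sets form at most half of $\cN_Q^\vell$. This balance relies on the typical-property tail bound showing that the number of $j$-sets with codegree significantly above $np^j$ is sublinear in $n^j$, combined with the per-set bound $d_j^{\cN_Q^\vell}(S) = O(M_j (np)^{\ell-j})$. The intricacy is that this must be orchestrated uniformly across $j$ and uniformly over all $Q \in \cQ_H$ (which may be chosen after revealing $G_{n,p}$), while keeping the final constants $c$ and $C$ dependent only on $\ell$ and $\eta$. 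This is exactly where the hypothesis $p \gg \log n/n$ is used to provide enough slack in the union-bound budget.
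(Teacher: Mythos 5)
There is a genuine gap, and it sits exactly where you locate the ``main obstacle''. First, a concrete error: the uniform codegree bound $\bigl|\bigcap_{u\in S}N_{G_{n,p}}(u)\bigr|\le C_1\max\{1,np^j\}$ is false for $2\le j\le\ell$ whenever $np^j=O(\log n)$. The codegree of a fixed $j$-set is $\Bin(n-j,p^j)$, and the maximum over all $\binom{n}{j}$ sets is of order $\log n/\log\bigl(2+\tfrac{\log n}{np^j}\bigr)$, which is an unbounded function of $n$; no constant $C_1$ works, regardless of how the tail bound $\Pr(\Bin(n,p^j)\ge t)\le(enp^j/t)^t$ is deployed. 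Since your lower bound on $e(\cN_Q^\vell)$ in the regime $k(Q)(np)^\ell\le n^\ell$ is obtained by dividing $\sum_{v\in X_Q}|\cN_Q^\vell(v)|$ by the maximum multiplicity $M_\ell$, you lose an unbounded factor precisely when $np^\ell=O(1)$, and part~\ref{item:tilde-G-hypergraph-edges} is not recovered.

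The deeper problem is the pruning-loss control. You propose to bound the number of edges of $\cN_Q^\vell$ meeting a ``popular'' $j$-set by the fact that only $o(n^j)$ sets of $\binom{[n]}{j}$ have atypically high codegree in $G_{n,p}$. But $Q$ is chosen adversarially \emph{after} $G_{n,p}$ is revealed, and the high-codegree $j$-sets are correlated with the very neighbourhoods $N_{G_{n,p}}(v)$ from which the $N_Q(v)$ are carved; a global count of bad sets says nothing about how many of them the adversary can pack into $\binom{N_Q(v)}{\ell}$. What is actually needed is: for each centre $v$, the number of $\ell$-subsets of $N_{G_{n,p}}(v)$ lying in a sparse ``forbidden'' hypergraph determined by the other centres is $o((np)^\ell)$ --- and stating this without circularity is exactly what the paper's proof achieves by ordering the centre vertices, classifying each as good or bad using only the previously exposed neighbourhoods, and applying \autoref{lemma:hypergraphs_neighbors} to the fresh randomness of the next neighbourhood; the failure probability $e^{-\rho np}\le n^{-2}$ is what permits a union bound over the $n^k$ ordered vertex sequences rather than over all of $\cQ_H$. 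Your static, post-hoc pruning has no substitute for this step, and the fallback you invoke in the many-centres regime (subsample $X_Q$ to size $\lceil p^{-\ell}\rceil$ and apply ``standard concentration'') also fails for $\log n/n\ll p\le\sqrt{\log n/n}$: the second-moment/Cauchy--Schwarz estimate for $\bigl|\bigcup_v\cN_Q^\vell(v)\bigr|$ is dominated by pairs of centres with codegree $\Theta(\log n)\gg np^2$, of which there are too many to discard.
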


We will use the following simple corollary of \autoref{lemma:tilde-G-hypergraph} to estimate the number of vertices in each $Q \in \cQ_H$ that is contained in $G_{n,p}$.

\begin{cor}
  \label{cor:Q-vmin}
  There exists a positive constant $c$ that depends only on $\eta$ such that when $p \gg \log n / n$, then a.a.s., for every $Q \in \cQ_H$ with $Q \subseteq G_{n,p}$ and every $k \in \br{r}$,
  \[
    |V^k(Q)| \ge \Big|\bigcup_{v \in V(Q)} N_Q(v) \cap V^k(Q)\Big| \ge c \cdot \min\{n, k(Q) \cdot np\}.
  \]
\end{cor}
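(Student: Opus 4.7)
The plan is to derive this corollary as an immediate consequence of \autoref{lemma:tilde-G-hypergraph} applied with the simplest possible vector $\vell$, namely the standard basis vector in direction $k$; that is, $\ell_k = 1$ and $\ell_j = 0$ for $j \neq k$, so $\ell = 1$. With this choice $\cN_Q^\vell(v)$ consists precisely of the singletons $\{u\}$ with $u \in N_Q(v) \cap V^k(Q)$ (the condition $|U \cap N_Q(v) \cap V^j(Q)| = 0$ for $j \neq k$ is automatic because the colour classes $V^j(Q)$ are pairwise disjoint), and hence
\[
|\cN_Q^\vell| = \Big|\bigcup_{v \in X_Q} N_Q(v) \cap V^k(Q)\Big|.
\]

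First I would observe that the upper bound $|V^k(Q)| \ge \big|\bigcup_{v \in V(Q)} N_Q(v) \cap V^k(Q)\big|$ is immediate, since each set $N_Q(v) \cap V^k(Q)$ is by definition a subset of $V^k(Q)$. For the nontrivial lower bound, I would invoke \autoref{lemma:tilde-G-hypergraph} with $\vell$ as above: it produces, a.a.s.\ uniformly over all $Q \in \cQ_H$ with $Q \subseteq G_{n,p}$, a subhypergraph $\cG \subseteq \cN_Q^\vell$ with $e(\cG) \ge c \cdot \min\{n, k(Q) \cdot np\}$ for some $c = c(\eta) > 0$ (this is part \ref{item:tilde-G-hypergraph-edges} of the lemma specialised to $\ell = 1$, which absorbs the dependence of the constant on $\ell$). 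Since $\cN_Q^\vell$ is a $1$-uniform hypergraph, its number of edges equals the size of its vertex set, so chaining these observations yields
\[
\Big|\bigcup_{v \in V(Q)} N_Q(v) \cap V^k(Q)\Big|
\ge \Big|\bigcup_{v \in X_Q} N_Q(v) \cap V^k(Q)\Big|
= |\cN_Q^\vell|
\ge e(\cG)
\ge c \cdot \min\{n, k(Q) \cdot np\},
\]
where the first inequality uses $X_Q \subseteq V(Q)$.

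There is essentially no obstacle: the corollary is a direct specialisation of \autoref{lemma:tilde-G-hypergraph} to the case of singletons, and part \ref{item:tilde-G-hypergraph-degrees} of that lemma is not needed at all. The only conceptual observation is that, for $\ell = 1$, the hypergraph $\cN_Q^\vell$ is $1$-uniform and therefore $|\cN_Q^\vell|$ literally counts the number of distinct vertices appearing in the color-$k$ neighbourhoods of the centre vertices, which is exactly the quantity the corollary seeks to bound from below.
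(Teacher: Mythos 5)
Your proof is correct and is exactly the intended argument: the paper states this as an immediate consequence of \autoref{lemma:tilde-G-hypergraph} without writing out a proof, and specialising the lemma to $\vell = (0,\dotsc,1,\dotsc,0)$ with $\ell = 1$, noting that a $1$-uniform hypergraph's edge count is the size of its support, is precisely the reasoning being invoked. Your handling of the two minor points (the trivial upper bound by $|V^k(Q)|$ and the passage from $X_Q$ to $V(Q)$) is also fine.
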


\begin{proof}[Proof of~\autoref{lemma:tilde-G-hypergraph}]
  Given an ordered sequence $v_1, \dotsc, v_k$ of vertices, we will categorise them as \emph{good} or \emph{bad}, based only on the knowledge of their neighbourhoods in $G_{n,p}$.  This will be done in such a way that, for every $Q \in \cQ_H$ with $Q \subseteq G_{n,p}$ whose centre vertices are $v_1, \dotsc, v_k$, every good vertex will contribute at least $(\eta np/\ell)^\ell/2$ edges to $\cG$, unless $\cG$ already has $\Omega(n^{\ell})$ edges.  We will assume that
  \begin{align}\label{eq:G-maximum-ell-degree-bound}
    \Delta(G_{n,p})^\ell \le 2(np)^\ell,
  \end{align}
  which holds in $G_{n,p}$ a.a.s.
  We first define a few constants.  Let $\rho$ be the constant whose existence is asserted by~\autoref{lemma:hypergraphs_neighbors} invoked with $\alpha \coloneqq (\eta/\ell)^\ell/2$ and set $D \coloneqq 2^{\ell+1}/\rho$.
  
  The first vertex $v_1$ is good.  Fix some $i \in \br{k-1}$ and suppose that we have already categorised $v_1, \dotsc, v_i$ as good or bad.  Let $W_i$ denote the set of good vertices among $\{v_1, \dotsc, v_i\}$.  Define
  \[
    \cGh_i \coloneqq \bigcup_{v \in W_i} \binom{N_{G_{n,p}}(v)}{\ell}
  \]
  and, for every $j \in \br{\ell-1}$,
  \begin{align*}
    \Delta_j^{(i)} & \coloneqq \max\left\{2(np)^{\ell-j},\  \frac{De(\cGh_i)}{n^j}\right\}, \\
    M_j^{(i)} & \coloneqq \left\{ T \in \binom{\br{n}}{j} \colon \deg_{\cGh_i}T \ge \Delta_j^{(i)}\right\}.
  \end{align*}
  We further define the `closure' $\cGb_i$ of $\cGh_i$ by
  \begin{align*}
    \cGb_i \coloneqq \cGh_i \cup \bigcup_{j = 1}^{\ell-1} \left\{U \in \binom{\br{n}}{\ell} \colon U \supseteq T \text{ for some } T \in M_j^{(i)}\right\}.
  \end{align*}
  We call $v_{i+1}$ a good vertex if
  \begin{align}
    \label{align:good-vertex-inequality}
    \left|\binom{N_{G_{n,p}}(v_{i+1}) \setminus \{v_1, \dotsc, v_i\}}{\ell} \cap \cGb_i \right| \le \frac{1}{2} \left(\frac{\eta np}{\ell}\right)^\ell;
  \end{align}
  otherwise, call $v_{i+1}$ a bad vertex.  We stress again that the property that $v_{i+1}$ is a good/bad vertex depends only on the neighbourhoods in $G_{n,p}$ of the previously considered vertices $v_1, \dotsc, v_i$ (and their order).

  \begin{claim}
    \label{claim:cGb-versus-W}
    For every $i \in \br{k}$, either $e(\cGb_i) < \rho n^\ell$ or $|W_i| \ge \rho / (4p^\ell)$.
  \end{claim}
  \begin{proof}
    We first observe that, for every $j \in \br{\ell-1}$,
    \[
      e(\cGh_i) \ge \frac{|M_j^{(i)}|}{\binom{\ell}{j}} \cdot \Delta_j^{(i)} \ge \frac{|M_j^{(i)}|}{\binom{\ell}{j}} \cdot \frac{D e(\cGh_i)}{n^j},
    \]
    which yields the bound $|M_j^{(i)}| \le \binom{\ell}{j}n^j/D$.  Since every element of $M_j^{(i)}$ contributes at most $n^{\ell-j}$ edges to $\cGb_i$, we have
    \[
      \begin{split}
        e(\cGb_i) & \le |W_i| \cdot \binom{\Delta(G_{n,p})}{\ell} + \sum_{j=1}^{\ell - 1} |M_j^{(i)}| \cdot n^{\ell - j} \le |W_i| \cdot 2(np)^{\ell} + \sum_{j=1}^{\ell-1} \binom{\ell}{j} \cdot \frac{n^\ell}{D} \\
        & \le 2|W_i| \cdot (np)^{\ell} + \frac{2^\ell}{D} \cdot n^\ell = 2|W_i| \cdot (np)^\ell + \frac{\rho}{2} n^\ell.
      \end{split}
    \]    
    Finally, if $e(\cGb_i) \ge \rho n^\ell$, then the above inequality yields $|W_i| \ge \rho / (4p^\ell)$.
  \end{proof}

  \begin{claim}
    \label{claim:good-vertices}
    Asymptotically almost surely, for every $k$ and each sequence $v_1, \dotsc, v_k$ of distinct vertices of $G_{n,p}$, at least $\min\{k/2, \rho/(4p^\ell)\}$ of them are good.
  \end{claim}
  \begin{proof}
    Fix an arbitrary sequence $v_1, \dotsc, v_k$ of distinct vertices of $G_{n,p}$.  For each $i \in \br{k}$, let $\Sigma_i$ denote the $\sigma$-algebra generated by exposing the neighbourhoods of $v_1, \dotsc, v_i$.  Observe that conditioned on $\Sigma_i$, the set $N_{G_{n,p}}(v_{i+1}) \setminus \{v_1, \dotsc, v_i\}$ is still a $p$-random subset of $\br{n} \setminus \{v_1, \dotsc, v_i\}$.  We may thus use \autoref{lemma:hypergraphs_neighbors} to conclude that, for every $i$,
    \begin{equation}
      \label{eq:Pr-vi-bad}
      \Pr\big(\text{$v_{i+1}$ is bad and $e(\cGb_i) < \rho n^\ell$} \mid \Sigma_i \big) \le \exp(-\rho n p).
    \end{equation}

    Let $\cB$ denote the event that the sequence $v_1, \dotsc, v_k$ contains fewer than the claimed number of good vertices.  In particular, there are at least $k/2$ bad vertices among them and, for each $i \le k$, we have $|W_i| < \rho/(4p^\ell)$ and thus $e(\cGb_i) < \rho n^\ell$, by \autoref{claim:cGb-versus-W}.  By the union bound,
    \[
      \Pr(\cB) \le \sum_{\substack{I \subseteq \br{k-1} \\ |I| \ge k/2}} \Pr(\text{$v_{i+1}$ is bad and $e(\cGb_i) < \rho n^{\ell}$ for all $i \in I$}).
    \]
    Since the event that $v_{i+1}$ is bad and $e(\cGb_i) < \rho n^\ell$ is $\Sigma_{i+1}$-measurable, we may use the chain rule and the estimate~\eqref{eq:Pr-vi-bad} to bound each term in the sum from above by $\exp(-\rho |I| n p )$ and conclude that
    \[
      \Pr(\cB) \le 2^k \cdot \exp(-\rho k n p / 2) \le n^{-2k},
    \]
    where the last inequality follows from our assumption that $p \gg \log n / n$.  Taking the union bound over all $k$ and all sequences gives the desired result.
  \end{proof}

  Fix some $Q \in \cQ_H$, let $k \coloneqq k(Q)$, and let $v_1, \dotsc, v_k$ be an arbitrary ordering of the centre vertices of $Q$.  We will show that under the assumption that at least $\min\{k/2, \rho/(4p^\ell)\}$ of those $k$ vertices are good (which, by \autoref{claim:good-vertices}, holds simultaneously for all $Q \in \cQ_H$), we may build a hypergraph $\cG \subseteq \cN_Q^\vell$ with the desired properties.  We will do so iteratively, by first defining a sequence $\cG_0, \dotsc, \cG_k$ and then setting $\cG \coloneqq \cG_k$.

  First, we let $\cG_0$ be the empty hypergraph.  Second, suppose that $i \in \br{k}$ and that we have already defined $\cG_0, \dotsc, \cG_{i-1}$.  If $v_i$ is a good vertex, we obtain $\cG_i$ by adding to $\cG_{i-1}$ some $\frac{1}{2} \left(\frac{\eta np}{\ell}\right)^\ell$ sets $U \subseteq N_{G_{n,p}}(v_i) \setminus \{v_1, \dotsc, v_k\}$ that belong to $\cN_Q^\vell(v_i) \setminus \cGb_{i-1}$ (where we assume that $\cGb_0$ is empty).  Note that this is possible, since
  \[
    |\cN_Q^\vell(v_i)| = \prod_{j=1}^r \binom{|N_Q(v_i) \cap V^j(Q)|}{\ell_j} \ge \prod_{j=1}^r \left(\frac{\eta np}{\ell}\right)^{\ell_j} = \left(\frac{\eta np}{\ell}\right)^{\ell}
  \]
  and the assumption that $v_i$ is a good vertex implies that at most half of those sets belong to $\cGb_{i-1}$, see~\eqref{align:good-vertex-inequality}.  Finally, if $v_i$ is a bad vertex, we simply let $\cG_i \coloneqq \cG_{i-1}$.
  
  Since $\cG_{i-1} \subseteq \cGh_{i-1} \subseteq \cGb_{i-1}$ for every $i$, each good vertex contributes at least $\frac{1}{2}\left(\frac{\eta np}{\ell}\right)^\ell$ distinct edges to $\cG$, which means that $e(\cG_i) \ge \frac{|W_i|}{2}\left(\frac{\eta np}{\ell}\right)^\ell$.  This fact and our assumed lower bound on the number of good vertices proves assertion~\ref{item:tilde-G-hypergraph-edges} of the lemma with a~positive constant $c$ that depends only on $\eta$, $\ell$, and $\rho = \rho(\eta, \ell)$.  Further, by \eqref{eq:G-maximum-ell-degree-bound}, we have $e(\cGh_i) \le |W_i| \cdot 2(np)^\ell \le 4 \left(\frac{\ell}{\eta}\right)^\ell \cdot e(\cG_i)$.  Hence,
  \[
    \Delta_j^{(i)} \le \max\left\{2(np)^{\ell-j},\  \frac{4D \ell^\ell}{\eta^\ell} \cdot \frac{e(\cG_i)}{n^j}\right\}
  \]
  for each $i \in \br{k}$ and all $j \in \br{\ell-1}$.  When we add edges to $\cG_{i-1}$ to form $\cG_i$, we only increase the degrees of sets that do not belong to $\bigcup_{j=1}^{\ell - 1} M_j^{(i)}$.  Moreover, since we add only subsets of the neighbourhood of $v_i$ in $G_{n,p}$, assumption~\eqref{eq:G-maximum-ell-degree-bound} implies that, for every $T \in \binom{\br{n}}{j}$, we increase the degree of $T$ by at most $2(np)^{\ell - j}$. Thus, for every $i \in \br{k}$ and every $j \in \br{\ell - 1}$,
  \[
    \Delta_j(\cG_i) \le \Delta_j^{(i-1)} + 2(np)^{\ell-j} \le 2\Delta_j^{(i)}.
  \]
  This immediately gives~\ref{item:tilde-G-hypergraph-degrees} from the statement of the lemma with a~constant $C$ that depends only on $\ell$, $\eta$, $D = D(\ell, \rho)$, and $\rho = \rho(\eta, \ell)$; indeed, the claimed upper bounds on $\Delta_0(\cG) = e(\cG)$ and $\Delta_\ell(\cG) = 1$ hold vacuously.
\end{proof}

\subsection{Summary}
\label{sec:summary}

Suppose, as in \autoref{section:outline}, that $G \sim G_{n,p}$ for some $p \ge (1+\eps)p_H$.
We are finally ready to define the notion of `pseudo-randomness' that we will assume our graph $G$ to have.
We let $\cT$ be the intersection of the following events:
\begin{enumerate}[{label=(T\arabic*)}]
\item
  $\deg_G(v) = (1 \pm o(1)) np$ for every $v \in V(G)$;
\item
  $e(G \cap \int(\Pi)) = e(\int(\Pi)) \cdot p \pm o(n^2p)$ and $e(G \cap \ext(\Pi)) = e(\ext(\Pi)) \cdot p \pm o(n^2p)$ for every family $\Pi$ of pairwise-disjoint sets of vertices (see \autoref{lemma:cuts-concentration});
\item
  $\big|\partial\partial_e\cH[G]\big| \le 4e_H^2n^{v_H-2}p^{e_H-2}$ for every $e \in K_n$ and $\big|\partial\cH_v[G]\big| \le 2v_He_Hn^{v_H-1}p^{e_H-1}$ for every vertex $v$ (see \autoref{lemma:partial});
\item
  $e(F \cap \int(\Pi_F)) \le \beta n^2p$ for every largest $H$-free subgraph $F \subseteq G$ (see \autoref{thm:Conlon-Gowers});
\item
  $|V^k(Q)| \ge c \cdot \min \{n, k(Q) \cdot np\}$ for every $Q \in \cQ_H$ such that $Q \subseteq G$ and every $k \in \br{r}$ (see \autoref{cor:Q-vmin});
\item
  $G$ satisfies the assertion of \autoref{lemma:tilde-G-hypergraph}, and thus also the assertion of \autoref{lemma:high-degree-bounds};
\item
  if $p > 1/(10r^3)$, then $e(G \cap \ext(\{A,B\})) = (1+o(1))\ext(\{A,B\})p$ for all pairs of disjoint sets $A$ and $B$ with $|A||B| \gg n$.
\end{enumerate}
It is clear that $G \in \cT$ asymptotically almost surely.

\section{Low-degree case}
\label{sec:low-degree-case}

Recall that $\cH$ denotes the family of all copies of $H$ in $K_n$.
Given a graph $Q \subseteq K_n$, define
\[
  \cHQL \coloneqq \{A \in \cH : e(A \cap Q) = 1 \text{ and } e(Q[V(A)]) = 1\}.
\]
In other words, $\cHQL$ comprises only those copies of $H$ that contain exactly one edge of $Q$ and whose vertex set induces no additional edges of $Q$.  Note that this definition guarantees that $A \setminus Q \neq B \setminus Q$ whenever $A$ and $B$ are two distinct elements of $\cHQL$.  In particular, if we define
\[
  \cFQL \coloneqq \{A \setminus Q : A \in \cHQL\},
\]
then, for each graph $\omega \in \cFQL$, there is a unique $A \in \cHQL$ with $\omega = A \setminus Q$; we will denote this $A$ with $\omega^+$.

\begin{lemma}
  \label{lemma:FQL-lower-bound}
  For every $r$-tuple $S = (S_1, \dotsc, S_r)$ of pairwise-disjoint sets of vertices and every graph $Q$ with $V(Q) \subseteq S_1$, we have
  \[
    |\cFQL[\ext(S)]| \ge \left(1 - \frac{v_H^2 \cdot \Delta(Q)}{|S_1|-v_H}\right) \cdot e(Q) \cdot N(H, K^+_S),
  \]
  where $K_S^+ \coloneqq \ext(S) \cup uv$, where $u$ and $v$ are two distinct vertices of $S_1$.
\end{lemma}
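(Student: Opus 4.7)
My plan is to decompose $|\cFQL[\ext(S)]|$ as a sum over edges of $Q$. For each $e = xy \in Q$, I would set
\[
  M_e \coloneqq \{A \in \cH : e \in A \text{ and } A \subseteq \ext(S) \cup \{e\}\},
\]
and first argue that $|M_e| = N(H, K_S^+)$. Indeed, since $\ext(S)$ is $r$-partite and $\chi(H) = r+1$, every copy of $H$ inside $K_S^+ = \ext(S) \cup \{uv\}$ must use the edge $uv$; the permutation of $V(K_n)$ that swaps $\{x,y\}$ with $\{u,v\}$ inside $S_1$ (and fixes the other $S_i$s pointwise) then supplies the required bijection.

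Next, I would call $A \in M_e$ \emph{bad} if $V(A)$ induces some edge of $Q$ other than $e$, and \emph{good} otherwise. Since $Q$ has all edges inside $S_1$, we have $Q \cap \ext(S) = \emptyset$, so $A \cap Q = \{e\}$ for every $A \in M_e$; consequently, the good members of $\bigcup_{e \in Q} M_e$ are precisely the copies $A \in \cHQL$ with $A \subseteq Q \cup \ext(S)$, which are in bijection with $\cFQL[\ext(S)]$ via $A \mapsto A \setminus Q$. It therefore suffices to establish $|M_e^{\mathrm{bad}}| \le v_H^2 \Delta(Q) |M_e|/(|S_1| - v_H)$ for every $e$.

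For the bad count, I would use the union bound
\[
  |M_e^{\mathrm{bad}}| \le \sum_{\substack{ww' \in Q \\ \{w,w'\} \neq \{x,y\}}} \big|\{A \in M_e : w, w' \in V(A)\}\big|,
\]
and exploit the fact that the symmetric group on $S_1 \setminus \{x,y\}$ preserves $M_e$. Averaging $|V(A) \cap (S_1 \setminus \{x,y\})|$ and $\binom{|V(A) \cap (S_1 \setminus \{x,y\})|}{2}$ over $A \in M_e$, together with $|V(A) \cap S_1| \le v_H$, yields the orbit-equal bounds $\tfrac{(v_H-2)|M_e|}{|S_1|-2}$ and $\tfrac{\binom{v_H-2}{2}|M_e|}{\binom{|S_1|-2}{2}}$ for the single-vertex and two-vertex containment counts, respectively. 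I would then split the sum over $ww'$ by whether $\{w,w'\}$ meets $\{x,y\}$ (contributing at most $2\Delta(Q)$ edges, each with the single-vertex bound) or is disjoint from $\{x,y\}$ (at most $e(Q) \le |V(Q)| \Delta(Q)/2 \le |S_1|\Delta(Q)/2$ edges, each with the pair bound).

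The main (minor) obstacle is compressing the resulting two terms $\tfrac{2(v_H-2)\Delta(Q)}{|S_1|-2}$ and $\tfrac{|S_1|(v_H-2)(v_H-3)\Delta(Q)}{2(|S_1|-2)(|S_1|-3)}$ into the clean form $\tfrac{v_H^2\Delta(Q)}{|S_1|-v_H}$; this is a direct algebraic manipulation that uses $v_H \ge 3$ (which holds because $H$ is nonbipartite). The degenerate case $|S_1| \le v_H$ can be discarded up front, since the right-hand side of the claim is then nonpositive. Summing the resulting inequality over $e \in Q$ and inserting $|M_e| = N(H, K_S^+)$ yields the asserted bound.
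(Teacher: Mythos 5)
Your proposal is correct and takes essentially the same route as the paper: both decompose over the edges $e \in Q$, identify the copies of $H$ in $\ext(S) \cup e$ with those in $K_S^+$ via the symmetry of $\ext(S)$ under permutations of $S_1$, and then union-bound the ``bad'' copies whose trace on $S_1$ induces a second edge of $Q$ -- the paper exposes the vertices of $V(A) \cap S_1$ sequentially while you union-bound over the edges of $Q$ with orbit-averaged containment counts, and your final algebra does close, since
\[
  \frac{2(v_H-2)}{|S_1|-2} + \frac{|S_1|(v_H-2)(v_H-3)}{2(|S_1|-2)(|S_1|-3)} \le \frac{(v_H-1)(v_H-2)}{|S_1|-v_H} \le \frac{v_H^2}{|S_1|-v_H}
\]
whenever $|S_1| > v_H \ge 3$. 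One small slip: when $|S_1| < v_H$ the right-hand side of the lemma is \emph{not} nonpositive (the subtracted fraction is negative, so the bound exceeds the trivial upper bound $e(Q) \cdot N(H, K_S^+)$ whenever $\Delta(Q) \ge 1$), so that degenerate case cannot be dismissed as you suggest; however, the paper's own proof likewise implicitly assumes $|S_1| > v_H$, and the lemma is only ever applied with $|S_1| \ge n/(2r)$, so this is immaterial.
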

\begin{proof}
  Let $e$ be a uniformly chosen random edge of $Q$ and let $G \coloneqq \ext(S) \cup e$.
  Note that $G \cong K_S^+$, as we assumed that $V(Q) \subseteq S_1$.  Further, let $A$ be a uniformly chosen random copy of $H$ in the random graph $G$; note that $e \in A$, as $H$ is edge-critical and $\chi(H)=r+1$.  Observe that $A \setminus e \in \cFQL[\ext(S)]$ if and only if $e$ is the only edge that $R \coloneqq V(A) \cap S_1$ induces in $Q$ and thus
  \[
    |\cFQL[\ext(S)]| = \Pr\big(e(Q[R]) = 1\big) \cdot e(Q) \cdot N(H, K_S^+).
  \]
  It now suffices to bound the above probability from below.

  To this end, note that $2 \le |R| \le v_H$ and that, conditioned on the event $|R|=i$, the set $R$ is a uniformly random $i$-element subset of $S_1$ containing $e$.  Fix some $i$, condition on the event $|R| = i$, let $e = \{w_1, w_2\}$, and let $\{w_3, \dotsc, w_i\}$ be a random ordering of the elements of $R \setminus e$.  The key observation is that, for every $j \ge 3$, the probability that $w_j$ is adjacent (in $Q$) to one of $w_1, \dotsc, w_{j-1}$ is $|N_Q(\{w_1, \dotsc, w_{j-1}\})| / (|S_1| - j + 1)$.  Consequently,
  \[
    \Pr\big(e(Q[R]) = 1 \mid |R| = i\big) \ge 1 - \sum_{j=3}^{i} \frac{(j-1) \cdot \Delta(Q)}{|S_1|-j+1} \ge 1 - \frac{v_H^2 \cdot \Delta(Q)}{|S_1|-v_H}.
  \]
  Since the final lower bound above holds for every value of $i$, it holds without the conditioning as well.
\end{proof}

\begin{lemma}
  \label{lemma:FQL-bounds}
  There exists a positive constant $\Clow$ such that the following holds for every $r$-tuple $S = (S_1, \dotsc, S_r)$ of pairwise-disjoint sets of vertices of size at least $n/(2r)$ each, every graph $Q \in \cQ_L$ with $V(Q) \subseteq S_1$, and all $p \ge n^{-1/m_2(H)}$:
  \begin{align*}
    \mu_p\big(\cFQL[\ext(S)]\big) & \ge (\pi_H-o(1)) \cdot e(Q) \cdot \big(\textstyle{\min_i|S_i|}\big)^{v_H-2} \cdot p^{e_H-1}, \\
    \frac{\Delta_p(\cFQL)}{\mu_p\big(\cFQL[\ext(S)]\big)} & \le \frac{\Clow\kappa\cdot n^{v_H-2}p^{e_H-1}}{\log n},
  \end{align*}
  where $\pi_H$ is the constant defined in~\eqref{eq:pi_H}.
\end{lemma}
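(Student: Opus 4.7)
The plan is to establish the two inequalities separately.

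For the lower bound on $\mu_p$, I would apply \autoref{lemma:FQL-lower-bound} directly. Since each edge of $\cFQL$ has size exactly $e_H-1$, we have $\mu_p(\cFQL[\ext(S)]) = |\cFQL[\ext(S)]| \cdot p^{e_H-1}$. The prefactor $1 - v_H^2\Delta(Q)/(|S_1|-v_H)$ in that lemma is $1-o(1)$ because $\Delta(Q) \le \kappa np/\log n$ (from $Q \in \cQ_L$) and $|S_1| \ge n/(2r)$. To estimate $N(H, K_S^+)$ from below, I would observe that $K_S^+$ contains a copy of $K_r(m)^+$ where $m \coloneqq \min_i |S_i|$ and whose extra edge coincides with the distinguished edge $uv$ in $S_1$. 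Since $H$ is edge-critical with $\chi(H) = r+1$, every copy of $H$ inside $K_r(m)^+$ must use the extra edge, and hence $N(H, K_S^+) \ge N(H, K_r(m)^+) = (\pi_H - o(1)) m^{v_H-2}$ by the definition of $\pi_H$ in~\eqref{eq:pi_H}. Combining these observations yields the claimed bound.

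For the upper bound on $\Delta_p(\cFQL)$, I would classify each contributing pair $\{A, B\} \subseteq \cHQL$ (with $A \ne B$ and $\omega_A \cap \omega_B \ne \emptyset$) by the isomorphism type of $J \coloneqq A \cap B$, so that each such pair contributes at most $p^{2e_H - e_J - 2}$ to $\Delta_p$. The dominant case is $J$ being a single non-$Q$-edge ($v_J = 2$, $e_J = 1$). Here, the key ``max-times-sum'' bound
\[
\#\bigl\{(A, B) : A \cap B = \{e\},\ e \notin E(Q)\bigr\} \le \max_{e \notin E(Q)} \bigl|\{A \in \cHQL : e \in E(A)\}\bigr| \cdot e_H \cdot |\cHQL|,
\]
combined with $|\cHQL| \le O(e(Q) n^{v_H-2})$ and $\max_{e \notin E(Q)} |\{A \in \cHQL : e \in E(A)\}| \le O(\kappa p n^{v_H-2}/\log n)$---the latter obtained by enumerating the required additional $Q$-edge of $A$ and using both $e(Q) \le \kappa n^2p/\log n$ and $\Delta(Q) \le \kappa np/\log n$---yields a contribution to $\Delta_p$ of order $O(\kappa e(Q) n^{2v_H-4} p^{2e_H-2}/\log n)$, matching the target after dividing by $\mu_p \ge \Omega(e(Q) n^{v_H-2}p^{e_H-1})$. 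Contributions from isomorphism types with $e_J \ge 2$ I would control using strict $2$-balance of $H$: \autoref{lemma:2-balanced-condition} gives $n^{v_J-2}p^{e_J-1} \ge n^\lambda$ for some $\lambda = \lambda(H) > 0$ and every proper $J \subsetneq H$ with $1 < e_J < e_H$, so the per-type contributions are polynomially smaller than the target.

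The main obstacle will be precisely this borderline case $J = $ a single non-$Q$-edge, where strict $2$-balance provides no polynomial saving. The max-times-sum trick above is essential, and both quantitative consequences of $Q \in \cQ_L$ enter the analysis in distinct ways: the bound on $e(Q)$ controls $|\cHQL|$ through the sum $\sum_f |\{A \ni f\}|$ over $f \in E(Q)$, while the bound on $\Delta(Q)$ prevents the maximum extension count at a single edge from blowing up for edges $e$ incident to vertices with many $Q$-neighbours.
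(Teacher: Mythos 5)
Your lower bound on $\mu_p$ is exactly the paper's argument (apply \autoref{lemma:FQL-lower-bound}, note the prefactor is $1-o(1)$ since $\Delta(Q)=o(n)$ and $|S_1|\ge n/(2r)$, and bound $N(H,K_S^+)$ below via the embedded $K_r(\min_i|S_i|)^+$), and your treatment of the dominant term of $\Delta_p$ --- a single shared non-$Q$-edge, handled by the max-times-sum bound exploiting both $e(Q)\le \kappa n^2p/\log n$ and $\Delta(Q)\le\kappa np/\log n$ --- also matches the paper. The gap is in your dismissal of the types with $e_J\ge 2$ ``by strict $2$-balance alone''. The true weight of a pair is $p^{2(e_H-1)-|\omega_A\cap\omega_B|}$ with $|\omega_A\cap\omega_B|=e_J-|E(J)\cap E(Q)|$, and the count of extensions of a placed $J$ to a second copy $B\in\cHQL$ is \emph{not} always $O(n^{v_H-v_J})$. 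You need one of two refinements in each sub-case, and strict $2$-balance in the form $n^{v_J-2}p^{e_J-1}\ge n^{\lambda}$ supplies neither. If $A$ and $B$ share their unique $Q$-edge and it lies in $J$, the true weight is $p^{2e_H-1-e_J}$, one factor of $p$ smaller than your uniform bound $p^{2e_H-e_J-2}$; with the crude count the per-type contribution to $\Delta_p/\mu_p$ becomes $\Theta\bigl(n^{v_H-2}p^{e_H-1}/(n^{v_J-2}p^{e_J})\bigr)$, and $n^{v_J-2}p^{e_J}\ge n^{\lambda}p$ can be polynomially small. Concretely, for $H=K_4$, $p\asymp n^{-2/5}$, and $J$ a triangle containing the shared $Q$-edge, your accounting yields a contribution of order $np^2=n^{1/5}$, whereas the target is $O(\kappa/\log n)$.

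If instead $A$ and $B$ do not share their $Q$-edge, your weight is exact but the crude extension count is not: since $B$ must contain a $Q$-edge outside $J$, the number of such $B$ is $O\bigl(e(Q)\,n^{v_H-v_J-2}+\Delta(Q)\,n^{v_H-v_J-1}\bigr)=O(\kappa p\,n^{v_H-v_J}/\log n)$ --- the same $Q$-bookkeeping you already invoke for $e_J=1$ --- and without this extra factor $O(\kappa p/\log n)$ you are again short by $p^{-1}$. The paper closes both sub-cases by classifying pairs according to the intersection of the \emph{augmented} copies $\omega_1^+\cap\omega_2^+$ (so that a shared $Q$-edge raises the exponent of $p$ by one, after which $n^{v_{H'}-2}p^{e_{H'}-1}\ge n^{\lambda}$ from \autoref{lemma:2-balanced-condition} suffices) and by applying the $\Delta(Q)$-refined extension count to \emph{every} intersection type in the non-shared case (where only $n^{v_{H'}-2}p^{e_{H'}-1}\ge 1$ is needed). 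Your argument is repairable along exactly these lines, but as written the $e_J\ge 2$ cases do not close.
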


\begin{proof}
  We will abbreviate $\cFQL$ by $\cF$ and $\cFQL[\ext(S)]$ by $\cF(S)$.  For an edge $e \in Q$, define
  \[
    \cF_e \coloneqq \{\omega \in \cF : \omega^+  = \omega \cup e\},
  \]
  so that $\cF \coloneqq \bigcup_{e \in Q} \cF_e$.  \autoref{lemma:FQL-lower-bound} and the assumption that $|S_1| \ge n/(2r)$ and $\Delta(Q) = o(n)$ imply that
  \begin{equation}
    \label{eq:mu-low-degree}
    \mu \coloneqq \mu_p(\cF(S)) = |\cF(S)| \cdot p^{e_H-1} \ge (1-o(1)) \cdot e(Q) \cdot N(H, K_S^+) \cdot p^{e_H-1},
  \end{equation}
  where $K_S^+$ is the graph defined in the statement of \autoref{lemma:FQL-lower-bound}.
  Further, since $K_S^+$ contains a subgraph isomorphic to $K_r(\min_i |S_i|)^+$, we have
  \[
    N(H, K_S^+) \ge N\big(H, K_r(\textstyle{\min_i |S_i|})^+\big) = (\pi_H-o(1)) \cdot \big(\textstyle{\min_i |S_i|}\big)^{v_H-2},
  \]
  where the last equality holds as $\min_i|S_i| \ge n/(2r)$, see the definition of $\pi_H$ given in~\eqref{eq:pi_H}.

  We now turn to bounding the correlation term $\Delta \coloneqq \Delta_p(\cF)$.
  By definition,
  \begin{equation}
    \label{eq:Delta-low-degree}
    \begin{split}
      \Delta & =\sum_{\omega_1 \in \cF} \sum_{\substack{\omega_2 \in \cF \setminus \{\omega_1\} \\ \omega_1 \cap \omega_2 \neq \emptyset}} p^{2(e_H-1)-e(\omega_1 \cap \omega_2)}  \\
      & \le |\cF| \cdot p^{2(e_H-1)} \cdot \max_{\omega_1 \in \cF} \sum_{\substack{\omega_2 \in \cF \setminus \{\omega_1\} \\ \omega_1 \cap \omega_2 \neq \emptyset}} p^{-e(\omega_1 \cap \omega_2)}.
    \end{split}
  \end{equation}
  Further, for every $\omega_1 \in \cF$, letting $e_1 \in Q$ be the unique edge such that $\omega_1 \in \cF_{e_1}$,
  \begin{equation}
    \label{eq:max-omega1}
    \sum_{\substack{\omega_2 \in \cF \setminus \{\omega_1\} \\ \omega_1 \cap \omega_2 \neq \emptyset}} p^{-e(\omega_1 \cap \omega_2)}
    = \sum_{\emptyset \neq H' \subsetneq H} \sum_{\substack{\omega_2 \in \cF \setminus \{\omega_1\} \\ \omega_1^+ \cap \omega_2^+ \cong H'}} p^{-e_{H'}+\1[\omega_2 \in \cF_{e_1}]}.
  \end{equation}

  The following claim will allow us to bound the right-hand side of the above inequality.

  \begin{claim}\label{omega_2_count}
    For each $e_1 \in Q$, every $\omega_1 \in \cF_{e_1}$, and all $H' \subseteq H$:
    \begin{enumerate}[label=(\roman*)]
    \item\label{item:omega-2-common-Q}
      $\left|\left\{\omega_2 \in \cF_{e_1} : \omega_1 \cap \omega_2 \neq \emptyset \wedge \omega_1^+ \cap \omega_2^+ \cong H' \right\}\right| \le  \1[e_{H'} > 1] \cdot  O\left(n^{v_H-v_{H'}}\right)$.
    \item
      $\left|\left\{\omega_2 \in \cF \setminus \cF_{e_1} : \omega_1 \cap \omega_2 \neq \emptyset \wedge \omega_1^+ \cap \omega_2^+ \cong H' \right\}\right| \le  O\left(\Delta(Q) \cdot n^{v_H-v_{H'}-1}\right).$
    \end{enumerate}
  \end{claim}

  \begin{proof}
    Fix some $e_1 \in Q$, $\omega_1 \in \cF_{e_1}$, and $H' \subseteq H$.  To see the first item, note that the set in the left-hand side of~\ref{item:omega-2-common-Q} is nonempty only if $e_{H'} > 1$; indeed, if $\omega_1 \cap \omega_2 \neq \emptyset$ for some $\omega_2 \in \cF_{e_1}$, then $\omega_1^+ \cap \omega_2^+$ contains $e_1$ and at least one more edge.  We may thus assume that $e_{H'} > 1$ and count the number of $\omega_2 \in \cF_{e_1}$ such that $\omega_1 \cap \omega_2 \neq \emptyset$ and $\omega_1^+ \cap \omega_2^+ \cong H'$ as follows:  First, there are at most $v_H^{v_{H'}-2}$ ways to choose the $v_{H'} - 2$ vertices of $\omega_1$ that certify $\omega_1^+ \cap \omega_2^+ \cong H'$.  Second, there are at most $n^{v_H - v_H'}$ options for the remaining vertices of $\omega_2$.  Finally, there are $O(1)$ choices to assign the roles that these vertices play in the copy of $H$.
    
    For the second item, we separately count those $\omega_2$ that belong to $\cF_{e_2}$ for some edge $e_2 \in Q \setminus \{e_1\}$ that shares an endpoint with $e_1$ and those $\omega_2$ that belong to $\cF_{e_2}$ for some edge $e_2 \in Q$ that is disjoint from $e_1$.

    There are at most $2\Delta(Q)$ edges $e_2$ of the former type.  For each such edge, we have at most $v_H^{v_{H'}-1}$ options for the $v_{H'} - 1$ vertices of $\omega_1$ that certify $\omega_1^+ \cap \omega_2^+ \cong H'$.  Moreover, there are at most $n^{v_H - v_{H'} - 1}$ ways to choose the remaining vertices of $\omega_2$.  Finally, there are $O(1)$ choices for the roles that these vertices play in the copy of $H$.  Thus, total the number of choices for $\omega_2$ in this case is $O\left(\Delta(Q) \cdot n^{v_H - v_{H'} - 1}\right)$. 
    
    There are at most $e(Q)$ edges $e_2$ that are disjoint from $e_1$.  For each such edge, there are now at most $v_H^{v_{H'}}$ options to choose the $v_{H'}$ vertices of $\omega_1$ that certify $\omega_1^+ \cap \omega_2^+ \cong H$ and at most $n^{v_H - v_{H'} - 2}$ options to choose the remaining vertices of $\omega_2$.  Finally, as before, there are $O(1)$ choices for the roles that these vertices play in the copy of $H$. Thus, the total number of choices for $\omega_2$ in this case is $O\left(e(Q) \cdot n^{v_H - v_{H'} - 2}\right)$.  The claimed bound follows as clearly $e(Q) \le \Delta(Q) n$.
  \end{proof}

  By~\eqref{eq:mu-low-degree}, \eqref{eq:Delta-low-degree}, \eqref{eq:max-omega1}, and \autoref{omega_2_count}, for some constant $C$ that depends only on $H$,
  \[
    \frac{\Delta}{\mu} \le C \cdot \frac{|\cF|}{|\cF(S)|} \cdot \sum_{\emptyset \neq H' \subsetneq H} \frac{n^{v_H-2} p^{e_H-1}}{n^{v_{H'}-2}p^{e_{H'}-1}} \cdot \left(\frac{\Delta(Q)}{np} + \1[e_{H'}>1]\right).
  \]
  \autoref{lemma:2-balanced-condition} and our assumption that $p \ge n^{-1/m_2(H)}$, imply that $n^{v_{H'}-2} p^{e_{H'}-1} \ge 1$ for all nonempty $H' \subseteq H$ and, further, that there exists a constant $\lambda > 0$ depending only on $H$ such that $n^{v_{H'}-2} p^{e_{H'}-1} \ge n^{\lambda}$ for all $H' \subsetneq H$ with $e_{H'} > 1$.
  Further, we clearly have $|\cF| \le e(Q) \cdot e_H n^{v_H-2}$, whereas we have shown above that $|\cF(S)| \ge e(Q) \cdot (\pi_H-o(1)) n^{v_H-2}$.
  We may thus conclude that, for some constant $C'$ that depends only on $H$,
  \[
    \frac{\Delta}{\mu} \le C' \cdot n^{v_H - 2} p^{e_H - 1} \cdot \left(\frac{\Delta(Q)}{np} + n^{-\lambda}\right)
    \le \frac{2\kappa C' \cdot n^{v_H-2}p^{e_H-1}}{\log n},
  \]
  where the final inequality holds as $\Delta(Q) \le \kappa np / \log n$ for every $Q \in \cQ_L$.
\end{proof}

\section{High-degree case}
\label{sec:high-degree-case}

Suppose that $Q \in \cQ_H$.  We start by defining a collection $\cHQH$ of copies of $H$ in $K_n$.
Let $X = X_Q$ be the set of centre vertices of $Q$ and, for every $v \in X$ and $k \in \br{r}$, write $N^k(v) \coloneqq N_Q(v) \cap V^k(Q)$ to denote the neighbours of $v$ that are coloured $k$.
Fix an edge $f \in H$ such that $\chi(H \setminus f) = r$, denote by $h$ one of the endpoints of $f$, and let $\ell \coloneqq \deg_Hh$ be the degree of $h$ in $H$.
Let $\varphi \colon H \setminus f \to \br{r}$ be a proper colouring of $H \setminus f$ such that $\varphi(h) = 1$.
We let $\cHQH$ be the collection of all copies of $H$ in $K_n$ that are constructed as follows:
\begin{enumerate}[{label=(\roman*)}]
\item
  map $h$ to some vertex $v \in X$,
\item
  map the $\ell$ neighbours of $h$ in $H$ into the sets $N^1(v), \dotsc, N^r(v)$, accordingly with the colouring $\varphi$,
\item
  map the remaining $v_H - \ell-1$ vertices of $H$ arbitrarily to the complement of $X$.
\end{enumerate}
Further, define
\[
  \cFQH \coloneqq \{A \setminus Q : A \in \cHQH\} \subseteq \cF_Q.
\]

\begin{lemma}\label{lemma:high-degree-bounds}
  There exists a positive constant $\chigh$ that depends only on $H$ and $\eta$ such that the following holds for each graph $Q \in \cQ_H$ such that $Q \subseteq G$ for some $G \in \cT$.
  There exists a family $\cF \subseteq \cFQH$ such that, for every $r$-tuple $(S_1, \dotsc, S_r)$ of pairwise disjoint sets that is compatible with $Q$ and satisfies $\min_j |S_j| \ge n/(2r)$,
  \begin{align*}
    \mu_p(\cF[\ext(S)]) & \ge \chigh \cdot \min\big\{k(Q) \cdot n^{v_H-1}p^{e_H}, n^2p\big\} \gg k(Q) \cdot np, \\
    \frac{\mu_p^2(\cF[\ext(S)])}{\Delta_p(\cF)} & \ge \chigh \cdot \min\big\{k(Q) \cdot n^{v_H-1}p^{e_H}, k(Q) \cdot n^{1+\lambda_H}p, n^2p\big\} \gg k(Q) \cdot np,
  \end{align*}
  where $\lambda_H$ is a positive constant that depends only on $H$.
\end{lemma}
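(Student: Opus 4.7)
The strategy is to single out from $\cFQH$ a subfamily $\cF$ whose copies use $h$-neighbourhoods that are well distributed across $X_Q$.  Let $\vell = (\ell_1, \dotsc, \ell_r)$ with $\ell_k \coloneqq |N_H(h) \cap \varphi^{-1}(k)|$, so $\ell \coloneqq \ell_1 + \dotsb + \ell_r = \deg_H(h)$, and let $\cG \subseteq \cN_Q^\vell$ denote the hypergraph produced by \autoref{lemma:tilde-G-hypergraph}.  Define $\cF \subseteq \cFQH$ to consist of those $\omega = A \setminus Q$ for which the image of $N_H(h)$ under the embedding $A$ is an edge of $\cG$, labelled according to $\varphi$.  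Because $X_Q$ is independent and dominates every edge of $Q$, a short check shows that $A \cap Q$ is exactly the star of $\ell$ edges from the image of $h$ to its neighbours; hence every $\omega \in \cF$ has precisely $e_H - \ell$ edges.

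For the lower bound on $\mu_p(\cF[\ext(S)])$, observe first that because $\chi(H \setminus f) = r$ but $\chi(H) = r+1$ we have $\varphi(h) = \varphi(h')$, so $f$ is the unique $\varphi$-monochromatic edge of $H$; since $f$ is mapped into $Q$, placing each of the $v_H - \ell - 1$ remaining vertices of the embedding in $S_{\varphi(\cdot)} \setminus X_Q$ forces $\omega \subseteq \ext(S)$.  With $\gtrsim e(\cG)$ choices of the labelled edge of $\cG$ (together with a centre), and $\gtrsim (n/(3r))^{v_H - \ell - 1}$ placements of the remaining vertices (using $|S_j| \ge n/(2r)$ and $|X_Q| = o(n)$), I obtain
\[
  \mu_p(\cF[\ext(S)]) \gtrsim e(\cG) \cdot n^{v_H - \ell - 1} \cdot p^{e_H - \ell} \gtrsim \min\big\{n^{v_H - 1} p^{e_H - \ell},\ k(Q) \cdot n^{v_H - 1} p^{e_H}\big\}.
\]
A short calculation using $p \ge (1+\eps)p_H$ together with \autoref{lemma:2-balanced-condition} applied to the proper subgraphs $H - u$ (yielding $\deg_H(u) \ge m_2(H)$ for every $u \in V(H)$, and in particular $\ell \ge m_2(H)$) shows that the first term on the right is $\ge n^2 p$, completing the first inequality of the lemma.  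The estimate $\mu_p \gg k(Q) np$ is then an immediate consequence of $n^{v_H-2}p^{e_H-1} \gg 1$, which follows from $p \ge (1+\eps)p_H$, and $|X_Q|=o(n)$.

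For the upper bound on $\Delta_p(\cF)$, I partition ordered pairs $(\omega_1, \omega_2)$ of distinct intersecting members of $\cF$ by the isomorphism type $J$ of $\omega_1 \cap \omega_2$ (seen as a subgraph of $H - h$), by an integer $j \in \{0, \dotsc, \ell\}$ recording the vertex-overlap of the two labelled $h$-neighbourhoods, and by whether the two centres coincide.  For each such type I fix $\omega_1$ (contributing at most $\mu_p \cdot p^{-(e_H - \ell)}$ options), pick the overlap vertices inside $\omega_1$ in $O(1)$ ways, complete the new $h$-neighbourhood via the appropriate degree bound $\Delta_j(\cG)$ from \autoref{lemma:tilde-G-hypergraph}, and enumerate the remaining vertices of $\omega_2$ by plain vertex counting.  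For $0 < e_J < e_H - \ell$, the strict $2$-balanced condition (\autoref{lemma:2-balanced-condition}) provides an $n^{\lambda_H}$ saving, producing the middle branch $k(Q)\,n^{1 + \lambda_H} p$; the boundary regimes (single-edge overlap, near-complete overlap off the centres, and coincident $h$-neighbourhoods) align with the remaining branches $k(Q)\,n^{v_H - 1} p^{e_H}$ and $n^2 p$.

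The hard part will be this overlap calculation: one has to invoke the right $\Delta_j(\cG)$ bound for each overlap shape — each such bound is itself a maximum of two expressions, forcing a further split by whether $k(Q)\,np$ exceeds $n$ or not — while simultaneously extracting the $n^{\lambda_H}$ saving from the strict $2$-balanced hypothesis, and then verifying that all three branches of the claimed minimum fall out of a single, uniform case analysis.  Keeping this bookkeeping clean, rather than producing three separate arguments that each capture only one branch, is the technical core of the lemma.
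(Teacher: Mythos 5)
Your proposal follows essentially the same route as the paper's proof: the same hypergraph $\cG \subseteq \cN_Q^{\vell}$ from \autoref{lemma:tilde-G-hypergraph} with $\vell$ determined by $\varphi$, the same family $\cF$ of copies whose $h$-star sits over an edge of $\cG$, the same first-moment bound $\mu_p(\cF[\ext(S)]) \gtrsim e(\cG)\, n^{v_H-\ell-1}p^{e_H-\ell}$ combined with \autoref{lemma:2-balanced-condition} applied to $H-h$, and the same decomposition of $\Delta_p(\cF)$ by the intersection type $H'\subseteq H-h$ and the overlap parameter $j$, with \autoref{lemma:2-balanced-condition} applied to $H'+h$ supplying the $n^{\lambda_H}$ saving. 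The only difference is that you stop short of executing the final bookkeeping — the split according to which branch of $\Delta_j(\cG)$ and of $e(\cG)$ is active — which the paper carries out explicitly and which does close exactly along the lines you predict.
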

\begin{proof}
  Fix some $Q \in \cQ_H$ and set $k = k(Q)$.  Let $\vell \coloneqq \big(|\varphi^{-1}(j) \cap N_H(h)|\big)_{j=1}^r$ and let $\cG \subseteq \cN_Q^\vell$ be the hypergraph from the statement of \autoref{lemma:tilde-G-hypergraph};
  we may find such a $\cG$ due to our assumption that $Q \subseteq G$ for some $G \in \cT$.
  Further, denote the two constants from the statement of \autoref{lemma:tilde-G-hypergraph} by $c_{\ref{lemma:tilde-G-hypergraph}}$ and $C_{\ref{lemma:tilde-G-hypergraph}}$ and observe that they both depend only on $\eta$ and $H$.  Recall from the definition of $\cN_Q^\vell$ that, for every edge $U \in \cG$, there is a $v \in X_Q$ that is adjacent in $Q$ to all $\ell$ vertices of $U$; denote some such $v$ by $v_U$ and let $E_h(U)$ be the set of $\ell$ edges connecting $v_U$ to the vertices of~$U$.  Finally, set
  \[
    \cF \coloneqq \{ \omega \subseteq K_n \setminus Q : \omega \, \dot\cup\, E_h(U) \cong H \text{ for some } U \in \cG \} \subseteq \cFQH.
  \]
  It is not hard to see that, for some absolute positive constants $c'$ and $c''$,
  \[
    |\cF[\ext(S)]| \ge c' \cdot e(\cG) \cdot \left(\min_j |S_j| - v_H - k - \eta np\right)^{v_H-\ell-1} \ge c'' \cdot e(\cG) \cdot n^{v_H - \ell - 1},
  \]
  and, consequently,
  \[
    \mu_p(\cF[\ext(S)]) = \sum_{\omega \in \cF} p^{e(\omega)} \ge c'' \cdot e(\cG) \cdot n^{v_H-\ell-1} p^{e_H-\ell}.
  \]
  Further, we claim that
  \[
    \begin{split}
      \Delta_p(\cF) & = \sum_{\substack{\omega, \omega' \in \cF \\ \omega \cap \omega' \ne \emptyset}} \mathbf{1}_{\omega \cup \omega' \subseteq G_{n,p}} \\
      & \le C' \cdot e(\cG) \sum_{j=0}^{\ell} \Delta_j(\cG) \sum_{\substack{\emptyset \ne H' \subseteq H - h\\ |V(H') \cap N_H(h)| = j}} n^{2v_H-2\ell-2-v_{H'}+j} p^{2e_H-2\ell-e_{H'}},
  \end{split}
  \]
  where $C'$ is a constant that depends only on $H$.  Indeed, we may enumerate all pairs of intersecting elements of $\cF$ as follows.  First, we choose an edge $U \in \cG$ that contains the images of the $\ell$ neighbours of $h$ in $H$ in its copy $\omega$.  Second, we choose the size $j \in \{0, \dotsc, \ell\}$ of the intersection of this set with the respective set in $\omega'$; there are at most $\binom{\ell}{j} \cdot \Delta_j(\cG)$ edges $U' \in \cG$ with $|U \cap U'| = j$.  Third, we choose a subgraph $H' \subseteq H$ which is isomorphic to the intersection of $\omega$ and $\omega'$.  Fourth, we then choose the $v_H - \ell - 1$ remaining vertices of $\omega$ and the $v_H -\ell - 1 - (v_{H'} - j)$ remaining vertices of $\omega'$.  Note that the union of $\omega$ and $\omega'$ has exactly $2(e_H - \ell) - e_{H'}$ edges.  The constant factor $C'$ accounts for the choices of the roles that the vertices of $\omega \cap \omega'$ play in $H'$.
  
  \begin{claim}
    For some positive constant $c$ that depends only on $\eta$ and $H$,
    \[
      \mu_p\big(\cF[\ext(S)]\big) \ge c \cdot \min\big\{kn^{v_H-1}p^{e_H}, n^2p\big\}.
    \]
  \end{claim}
  \begin{proof}
    It suffices to show that, for some $c > 0$ that depends only on $\eta$ and $H$,
    \[
      e(\cG) \cdot n^{v_H - \ell - 1} p ^{e_H - \ell} \ge c \cdot \min\big\{kn^{v_H-1}p^{e_H}, n^2p\big\}.
    \]
    We consider two cases, depending on which of the two terms achieves the minimum in \autoref{lemma:tilde-G-hypergraph}~\ref{item:tilde-G-hypergraph-edges}.  First, if $e(\cG) \ge c_{\ref{lemma:tilde-G-hypergraph}}k(np)^\ell$, then
    \[
      e(\cG) \cdot n^{v_H - \ell - 1} p^{e_H - \ell} \ge c_{\ref{lemma:tilde-G-hypergraph}}k \cdot n^{v_H-1} p^{e_H}.
    \]
    Second, if $e(\cG) \ge c_{\ref{lemma:tilde-G-hypergraph}}n^\ell$, then
    \[
      e(\cG) \cdot n^{v_H - \ell - 1} p^{e_H - \ell} \ge c_{\ref{lemma:tilde-G-hypergraph}} n^{v_H - 1} p^{e_H - \ell}.
    \]
    Since $v_H-1$ and $e_H-\ell$ are the numbers of vertices and edges of the graph $H-h \subseteq H$, \autoref{lemma:2-balanced-condition} implies that
    \[
      n^{v_H - 1} p^{e_H - \ell} \ge n^2 p.\qedhere
    \]
  \end{proof}

  \begin{claim}
    For some positive constants $c$ that depends only on $\eta$ and $H$,
    \[
      \frac{\mu_p\big(\cF[\ext(S)]\big)^2}{\Delta_p(\cF)} \ge c \cdot \min\big\{kn^{v_H-1}p^{e_H}, kn^{1+\lambda}p, n^2p\},
    \]
    where $\lambda$ is the constant from the statement of \autoref{lemma:2-balanced-condition}.
  \end{claim}
  \begin{proof}
    We first note that, for some positive constant $c'$,
    \[
      \frac{\mu_p\big(\cF[\ext(S)]\big)^2}{\Delta_p(\cF)} \ge \frac{c' \cdot  e(\cG)}{\max \left\{\Delta_j(\cG) \cdot n^{j-v_{H'}} p^{-e_{H'}} : \emptyset \ne H' \subseteq H - h \text{ with } |V(H') \cap N_H(h)| = j\right\}}.
    \]
    Fix some nonempty $H' \subseteq H - h$ and an integer $j \in \{0, \dotsc, \ell\}$.  We consider two cases, depending on which of the two terms achieves the maximum in \autoref{lemma:tilde-G-hypergraph}~\ref{item:tilde-G-hypergraph-degrees}.  First, if $\Delta_j(\cG) \le C_{\ref{lemma:tilde-G-hypergraph}} e(\cG)/n^j$, then, by \autoref{lemma:2-balanced-condition},
    \[
      \frac{e(\cG)}{\Delta_j(\cG) n^{j-v_{H'}} p^{-e_{H'}}} \ge \frac{n^{v_H'} p^{e_H'}}{C_{\ref{lemma:tilde-G-hypergraph}}} \ge \frac{n^2p}{C_{\ref{lemma:tilde-G-hypergraph}}}.
    \]
    We may thus suppose that $e(\cG)/n^j \le C_{\ref{lemma:tilde-G-hypergraph}} e(\cG) / n^j < \Delta_j(\cG) \le 4(np)^{\ell-j}$; note that this implies that $j \ge 1$, as $\Delta_0(\cG) = e(\cG)$.  We consider two further subcases, depending on which of the two terms achieves the minimum in \autoref{lemma:tilde-G-hypergraph}~\ref{item:tilde-G-hypergraph-edges}.  If $e(\cG) \ge c_{\ref{lemma:tilde-G-hypergraph}}k(np)^\ell$, then
    \[
      \frac{e(\cG)}{\Delta_j(\cG) n^{j-v_{H'}} p^{-e_{H'}}} \ge \frac{c_{\ref{lemma:tilde-G-hypergraph}}kn^{v_{H'}} p^{e_{H'}+j}}{4}.
    \]
    We now note that $v_{H'}+1$ and $e_{H'}+j$ are the numbers of vertices and edges of the graph $H' + h \subseteq H$.  If $v_{H'} + 1 = v_H$ and $e_{H'}+j = e_H$, then there is nothing left to prove.  Otherwise, the graph $H' + h$ is a proper subgraph of $H$ with at least two edges (as $H' \neq \emptyset$ and $j \ge 1$) and thus \autoref{lemma:2-balanced-condition} implies that $n^{v_{H'}} p^{e_{H'}+j} \ge n^{1+\lambda}p$ for some $\lambda > 0$ that depends only on $H$.  Finally, if $e(\cG) \ge c_{\ref{lemma:tilde-G-hypergraph}}n^\ell$, then
    \[
      \frac{e(\cG)}{\Delta_j(\cG) n^{j-v_{H'}} p^{-e_{H'}}} \ge \frac{c_{\ref{lemma:tilde-G-hypergraph}}n^{v_{H'}} p^{e_{H'}+j-\ell}}{4} \ge \frac{c_{\ref{lemma:tilde-G-hypergraph}}n^{v_{H'}}p^{e_{H'}}}{4} \ge \frac{c_{\ref{lemma:tilde-G-hypergraph}}n^2p}{4},
    \]
    where the last inequality follows from \autoref{lemma:2-balanced-condition}.
  \end{proof}
  In order to complete the proof of the lemma, we just point out that, by \autoref{lemma:2-balanced-condition} and our the assumption that $k = o(n)$,
  \[
    \min\big\{k(Q) \cdot n^{v_H-1}p^{e_H}, k(Q) \cdot n^{1+\lambda}p, n^2p\big\} \gg k(Q) \cdot np.\qedhere
  \]
\end{proof}

\section{Rigidity and correlation}
\label{sec:rigidity-correlation}

Recall that, for a graph $G$, a set of cuts $\cC$, and $\Pi \in \cC$, we defined
\begin{align*}
  b_{\cC}(G) &\coloneqq \max_{\Pi' \in \cC} e(\ext(\Pi') \cap G), \\
  \deficit_{\cC}(\Pi;G) &\coloneqq b_{\cC}(G) - e(\ext(\Pi) \cap G).
\end{align*}
Additionally, we now also set
\[
  \maxcut_{\cC}(G) \coloneqq \{\Pi \in \cC \colon e(G \cap \ext(\Pi)) = b_{\cC}(G)\}.
\]
Following DeMarco and Kahn~\cite{DeMKah15Tur}, we define an equivalence relation $\equiv_{\cC, G}$ on $V(G)$ by:
\begin{align*}
    x \equiv_{\cC,G} y \quad \Longleftrightarrow \quad \text{$x$ and $y$ are in the same part for every cut in $\maxcut_{\cC}(G)$}.
\end{align*}
The equivalence classes of $\equiv_{\cC, G}$ will be called $(\cC, G)$-\textit{components}, or simply \textit{components} if the identities of $\cC$ and $G$ are clear.
The following definition, again borrowed from DeMarco and Kahn~\cite{DeMKah15Tur}, is key.

\begin{dfn}
  Given a family $\cC$ of $r$-cuts, and a graph $G$, we say that $G$ is \emph{$\cC$-rigid} if the number of equivalent pairs of vertices under $\equiv_{\cC, G}$ is at least $(1-\alpha)n^2/(2r)$.
\end{dfn}

A key property of graphs $G$ that are rigid with respect to a family $\cC$ of balanced cuts is that all cuts $\Pi \in \maxcut_{\cC}(G)$ agree on most of the vertices of $G$.  This statement is made precise by the following proposition, which is essentially~\cite[Proposition~10.1]{DeMKah15Tur}.

\begin{prop}\label{prop:rigid-has-core}
  Suppose that $\cC$ is a family of balanced $r$-cuts.
  If $G$ is $\cC$-rigid, then there are distinct $(\cC, G)$-components $S_1, \dotsc, S_r$, each of size greater than $(1-4r\alpha) \cdot n/r$.
\end{prop}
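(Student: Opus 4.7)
My plan is to extract two pieces of information and combine them: a quantitative lower bound on the sum of squared component sizes coming from rigidity, and the observation that in any maximum cut each part is a union of $(\cC,G)$-components (simply because every pair of equivalent vertices stays together across \emph{all} cuts in $\maxcut_\cC(G)$). Letting $T_1,\dots,T_m$ denote the components, the rigidity hypothesis rephrases as
\[
  \sum_{i=1}^m \binom{|T_i|}{2} \ge (1-\alpha)\frac{n^2}{2r},
\]
which gives $\sum_i |T_i|^2 \ge (1-\alpha) n^2/r$.

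Next I would fix an arbitrary $\Pi=(V_1,\dots,V_r)\in\maxcut_\cC(G)$; by the preceding observation each $V_j$ is a union of some of the $T_i$. For each $j$, let $s_j \coloneqq \max\{|T_i| : T_i \subseteq V_j\}$. The trivial inequality $\sum_{T_i \subseteq V_j}|T_i|^2 \le s_j \cdot |V_j|$ together with $|V_j|\le (1+\delta)n/r$ (since $\Pi$ is $\delta$-balanced as a member of $\cC$) yields
\[
  (1-\alpha)\frac{n^2}{r} \le \sum_{j=1}^r s_j |V_j| \le \frac{(1+\delta)n}{r} \sum_{j=1}^r s_j,
\]
so $\sum_j s_j \ge (1-\alpha)(1+\delta)^{-1} n \ge (1-\alpha-\delta)n$.

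Finally, using again that $s_j \le |V_j| \le (1+\delta)n/r$ for each $j$, I would write
\[
  s_j \ge \sum_{k=1}^r s_k - (r-1)\cdot \frac{(1+\delta)n}{r} \ge \frac{n}{r}\bigl(1 - r\alpha - (2r-1)\delta\bigr),
\]
and then, since $\delta$ is chosen much smaller than $\alpha$ in the parameter hierarchy of Section on constants, conclude $s_j > (1-4r\alpha)n/r$. For each $j$, pick a component $S_j \subseteq V_j$ realising $s_j$; these $r$ components live in pairwise different parts of $\Pi$ and are therefore distinct. This gives the desired collection.

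The only real thing to check carefully is the final inequality $s_j > (1-4r\alpha)n/r$, which requires $\delta$ small enough compared with $\alpha/r$; this is the unique spot where the precise dependence $\delta = \delta(r,p_0,\alpha)$ enters, and it is easily arranged. I do not anticipate any substantive obstacle — the whole argument is essentially a convexity/averaging computation once the right-hand side of rigidity has been translated into an $\ell^2$ statement about component sizes.
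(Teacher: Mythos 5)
Your argument is correct, and it reaches the stated conclusion (in fact with the slightly better bound $s_j \ge \frac{n}{r}\bigl(1-r\alpha-(2r-1)\delta\bigr)$), but it is organised differently from the paper's proof. The paper never fixes a maximum cut: it sets $\balpha = 4r\alpha$, lets $\lambda n$ be the number of vertices lying in components of size at most $(1-\balpha)n/r$, bounds the number of equivalent pairs from above by $(1-\lambda)n\cdot(1+\delta)\frac{n}{r} + \lambda n\cdot(1-\balpha)\frac{n}{r}$, deduces $\lambda \le 1/(2r)$ from rigidity, and then concludes by a pigeonhole count that fewer than $r$ large components would leave too few vertices. You instead convert rigidity into the $\ell^2$ bound $\sum_i |T_i|^2 \ge (1-\alpha)n^2/r$, use the (correct) observation that each part of a cut in $\maxcut_\cC(G)$ is a union of components to group the $\ell^2$ mass into the $r$ parts, and extract the largest component of each part; distinctness of the $S_j$ then comes for free from their living in different parts, whereas the paper gets it from counting. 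Both routes use exactly the same two inputs --- the rigidity lower bound on equivalent pairs and the upper bound $(1+\delta)n/r$ on part/component sizes coming from $\delta$-balancedness --- and both need $\delta$ small relative to $\alpha$ (e.g.\ $\delta \le \alpha$ suffices in either version), which you correctly flag as the only place the constant hierarchy enters. One pedantic point common to both arguments: one needs $\maxcut_\cC(G) \ne \emptyset$ (equivalently $\cC \ne \emptyset$) for the component structure to be meaningful, which is implicit throughout.
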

\begin{proof}
  Let $\balpha \coloneqq 4r\alpha$ and let $\lambda n$ be the number of vertices of $G$ that belong to $(\cC, G)$-components of size at most $(1-\balpha) \cdot n/r$.
  Further, denote by $P$ the number of $\equiv_{\cC, G}$-equivalent pairs of vertices.
  Since $\cC$ contains only $\delta$-balanced cuts, no $(\cC, G)$-component can be larger than $(1+\delta) \cdot n/r$.
  Consequently,
  \[
    (1-\alpha) \frac{n^2}{r} \le 2P \le (1-\lambda)n \cdot (1+\delta)\frac{n}{r} + \lambda n \cdot (1-\balpha)\frac{n}{r} = (1+\delta-\lambda(\delta + \balpha)) \frac{n^2}{r},
  \]
  which implies that (since we may assume that $\delta \le \alpha$)
  \[
    \lambda \le \frac{\alpha+\delta}{\balpha+\delta} \le \frac{2\alpha}{\balpha} = \frac{1}{2r}.
  \]
  On the other hand, if fewer than $r$ components had size larger than $(1-\balpha) \cdot n/r$, then
  \[
    (1-\lambda) n \le (r-1) \cdot (1+\delta)\frac{n}{r} \le \left(1-\frac{1}{r}\right)\left(1+\frac{1}{2r}\right)n < \left(1-\frac{1}{2r}\right)n,
  \]
  as we may assume that $\delta \le 1/(2r)$.
  This completes the proof of the proposition.
\end{proof}

For a $\cC$-rigid graph $G$, we will call the (necessarily unique) collection $\{S_1, \dotsc, S_r\}$ in the assertion of \autoref{prop:rigid-has-core} the \emph{$\cC$-core} of $G$ and denote it by $\core_{\cC}(G)$.
Note that, in contrast to our convention for cuts, we think of the core as unordered.
We will say that (the vertex set of) a coloured graph $Q$ is \emph{contained in the core} if each of the sets $V^1(Q), \dotsc, V^k(Q)$ of vertices of $Q$ coloured $1, \dotsc, r$, respectively, is contained in a distinct element of the core.

Our next lemma, which is the key result of this section, is a variant of \cite[Lemma~10.2]{DeMKah15Tur}.

\begin{lemma}\label{lemma:correlation-argument}
  Let $Q$ be an $\br{r}$-coloured graph, let $\cC$ be a family of balanced $r$-cuts compatible with $Q$, and let $\xi \le 1$.
  Suppose that, for each tuple $S=(S_1, \dotsc, S_r)$ that is contained in some cut from $\cC$ and satisfies $\min_i |S_i| \ge (1-4r\alpha) \cdot n/r$, we have an event $F(S_1, \dotsc, S_r)$ that is decreasing, determined by $\ext(S)$, and satisfies
  \[
    \Pr\big(F(S_1, \dots, S_r) \mid Q \subseteq G_{n,p}\big) \le \xi.
  \]
  Let $\cR$ be the event that $G_{n,p}$ is $\cC$-rigid, with $\{S_1, \dotsc, S_r\} = \core_{\cC}(G)$, labeled so that $V^i(Q) \subseteq S_i$ for every $i \in \br{r}$, and $F(S_1, \dotsc, S_r)$ holds. Then $\Pr(\cR \mid Q \subseteq G_{n,p}) \le r! \cdot \xi$.
\end{lemma}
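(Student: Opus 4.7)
The approach is a Harris-type correlation argument in the spirit of DeMarco--Kahn, proceeding in three stages: enumeration, monotonicity, and Harris. First, I decompose $\cR$ by the identity of the core. Let $\cS$ denote the collection of ordered tuples $S = (S_1, \dotsc, S_r)$ of pairwise disjoint vertex sets with $\min_i |S_i| > (1-4r\alpha) \cdot n/r$, $V^i(Q) \subseteq S_i$ for every $i$, and $S$ contained in some cut of $\cC$. For each $S \in \cS$, let $E(S)$ denote the event that $G_{n,p}$ is $\cC$-rigid with $\core_\cC(G_{n,p}) = \{S_1, \dotsc, S_r\}$ under the labelling sending $V^i(Q)$ into $S_i$. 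Every outcome in $\cR$ lies in $E(S) \cap F(S)$ for some $S \in \cS$, and for any fixed realisation of $G_{n,p}$ the unordered core is unique and admits at most $r!$ orderings compatible with the colouring, so $\sum_{S \in \cS} \Pr(E(S) \mid Q \subseteq G_{n,p}) \le r!$. It therefore suffices to prove, for each $S \in \cS$, that $\Pr(E(S) \cap F(S) \mid Q \subseteq G_{n,p}) \le \xi \cdot \Pr(E(S) \mid Q \subseteq G_{n,p})$.

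Fix $S \in \cS$ and set $T := V(K_n) \setminus \bigcup_i S_i$, partitioning $E(K_n)$ into $\ext(S)$, $\int(S)$, and the set $E_T$ of pairs touching $T$. Since $V(Q) \subseteq \bigcup_i S_i$, the graph $Q$ has no edge in $E_T$, so conditional on $Q \subseteq G_{n,p}$ the three random edge sets $G_{n,p} \cap \ext(S)$, $G_{n,p} \cap \int(S)$, $G_{n,p} \cap E_T$ are mutually independent. The crux of the argument is the claim that, conditional on $(G_{n,p} \cap \int(S), G_{n,p} \cap E_T)$ and on $Q \subseteq G_{n,p}$, the event $E(S)$ is increasing in $G_{n,p} \cap \ext(S)$. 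To verify this, suppose $G$ satisfies $E(S)$ and $e \in \ext(S) \setminus E(G)$. Because $\core_\cC(G) = \{S_1, \dotsc, S_r\}$, every $\Pi \in \maxcut_\cC(G)$ refines $S$, placing each $S_i$ inside a single part $V^\Pi_{\sigma(i)}$; in particular $e \in \ext(\Pi)$, so $e((G \cup \{e\}) \cap \ext(\Pi))$ exceeds $e(G \cap \ext(\Pi))$ by exactly $1$ for every such $\Pi$, while every other cut in $\cC$ gains at most $1$. Hence $\maxcut_\cC(G \cup \{e\}) = \maxcut_\cC(G)$, the equivalence relation $\equiv_{\cC, \cdot}$ (and thus $\core_\cC(\cdot)$) is unaltered, and $E(S)$ still holds.

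Since $F(S)$ is a decreasing function of $G_{n,p} \cap \ext(S)$ by hypothesis, Harris's inequality on the product space indexed by $\ext(S) \setminus E(Q)$ gives $\Pr(E(S) \cap F(S) \mid G_{n,p} \cap \int(S), G_{n,p} \cap E_T, Q \subseteq G_{n,p}) \le \Pr(E(S) \mid \cdots) \cdot \Pr(F(S) \mid \cdots)$, and the last factor equals $\Pr(F(S) \mid Q \subseteq G_{n,p}) \le \xi$ since $F(S)$ depends only on $G_{n,p} \cap \ext(S)$, which under the conditioning is independent of $(G_{n,p} \cap \int(S), G_{n,p} \cap E_T)$. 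Integrating over $(G_{n,p} \cap \int(S), G_{n,p} \cap E_T)$ produces the required per-$S$ bound, and summing over $S \in \cS$ yields the claim $r! \cdot \xi$. The main obstacle is the monotonicity step: a priori, adding an external edge could split a core component or absorb a $T$-vertex into some $S_i$, destroying $E(S)$. The resolution is that \emph{within} the regime $E(S)$, every max cut already refines $S$, so the added edge uniformly raises the value of every max cut by $1$ without introducing new max cuts --- leaving the equivalence relation, and hence the core, entirely intact.
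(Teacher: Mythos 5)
Your proposal is correct and follows essentially the same route as the paper: decompose $\cR$ over ordered core tuples, observe that the event ``rigid with core $\{S_1,\dotsc,S_r\}$'' is increasing in the $\ext(S)$-coordinates because adding an edge of $\ext(S)$ raises every maximum cut by exactly one and hence leaves $\maxcut_{\cC}$, the equivalence relation, and the core unchanged, then apply Harris's inequality and sum, using uniqueness of the core to bound the sum of the $E(S)$-probabilities by $r!$. Your version merely spells out more explicitly the conditioning on the $\int(S)$- and $T$-edges before invoking Harris, which the paper leaves implicit.
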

\begin{proof}
  For an ordered tuple of pairwise-disjoint vertex sets $S = (S_1, \dots, S_r)$ that is compatible with $Q$ and satisfies $\min_i |S_i| \ge (1-4r\alpha)n/r$, denote by $E(\{S_1, \dots, S_r\})$ the event that $G_{n,p}$ is $\cC$-rigid, with core $\{S_1, \dotsc, S_r\}$.
  We claim that this event is increasing in $\ext(S)$. Indeed, if $G \in E(\{S_1, \dotsc, S_r\})$, then adding to $G$ an edge $e \in \ext(S)$ does not change the set of largest cuts of $G$; in particular, the graph $G \cup e$ is rigid and has the same core as $G$.
  By Harris's inequality,
  \begin{multline*}
    \Pr\big(E(\{S_1, \dotsc, S_r\}) \cap F(S_1, \dots S_r) \mid Q \subseteq G_{n,p}\big) \\
    \le \Pr\big(E(\{S_1, \dots, S_r\}) \mid Q \subseteq G_{n,p}\big) \cdot \Pr\big(F(S_1, \dots S_r) \mid Q \subseteq G_{n,p}\big).
  \end{multline*}
  Consequently,
  \begin{align*}
    \Pr(\cR \mid Q \subseteq G_{n,p})
    & = \sum_{(S_1, \dotsc, S_r)} \Pr\big(E(\{S_1, \dots, S_r\}) \cap F(S_1, \dots S_r) \mid Q \subseteq G_{n,p}\big) \\
    & \le\sum_{(S_1, \dots, S_r)} \Pr(E(\{S_1, \dots, S_r\}) \mid Q \subseteq G_{n,p}) \cdot \xi
      \le r! \cdot \xi,
  \end{align*}
  where the sums are over all ordered tuples $(S_1, \dotsc, S_r)$ as above and the last inequality follows since the core is unique and its elements can be ordered in no more than $r!$ different ways.
\end{proof}

\section{Approximating cuts with small deficit by cores}
\label{sec:algorithm}

Recall that, for a $Q \in \cQ$, we defined $\cC_Q$ to be the family of all $\delta$-balanced $r$-cuts that are compatible with $Q$.
It will be convenient to define, for an $\br{r}$-coloured graph $Q$,
\[
  \vmin(Q) \coloneqq \min\{|V^k(Q)| : k \in \br{r} \text{ and } V^k(Q) \neq \emptyset\}.
\]
Let $\cB_{Q, \cF}(d, x)$ denote the set of all graphs $G \supseteq Q$ that admit an $r$-cut $\Pi \in \cC_Q$ with $\deficit_{\cC_Q, G}(\Pi) \le d$ and $\nu\big(\cF[G \cap \ext(\Pi)]\big) \le x$, cf.~\ref{item:plan-P2} in the plan of the proof presented in \autoref{section:outline}.
The following statement will allow us to bound the probability of $\cB_{Q, \cF_Q'}(d_Q, d_Q)$ from above in terms of the probability of the event that $\nu\big(\cF_Q'[G_{n,p} \cap \ext(\core_{\cC_Q}(G_{n,p}))]\big) \le d_Q$, which we will be able to estimate, in view of \autoref{lemma:correlation-argument}, using Janson's inequality and the calculations performed in \autoref{sec:low-degree-case} and~\autoref{sec:high-degree-case}.

\begin{prop}\label{prop:algorithm-result}
  Suppose that an $\br{r}$-coloured graph $Q \in \cQ$, integers $d$, $D$, $m$, and $x$, and a family $\cF$ of subgraphs of $K_n \setminus Q$ satisfy:
  \[
    d, e(Q) \ll n^2p
    \qquad
    \text{and}
    \qquad
    d \le \vmin(Q) \cdot \min\{np, \delta/(2r^3) \cdot n\}.
  \]
  Denote
  \[
    \xi \coloneqq \Pr\big(G_{n,p} \text{ is $\cC_Q$-rigid and } \nu(\cF[G_{n,p} \cap \ext(\core_{\cC_Q}(G_{n,p}))]) \le x + m + r^2 (d+1) \mid Q \subseteq G_{n,p}\big).
  \]
  Then, for any positive constant $K$, the probability that $\min\big\{ \binom{n}{2}, |\partial\cF[G_{n,p}]|\big\} \le Dm/p$ and $G_{n,p} \in \cT \cap \cB_{Q, \cF}(d, x)$ is at most
  \[
    p^{e(Q)} \cdot D^d \cdot \left( Z^d \cdot \xi + e^{-K d \log(n^2p/d)} \right),
  \]
  where $Z$ depends only on $\alpha$, $p_0$, and $r$.
\end{prop}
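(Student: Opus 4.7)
The plan is to follow a switching argument in the spirit of DeMarco and Kahn~\cite{DeMKah15Tur}. Given a graph $G \in \cT \cap \cB_{Q, \cF}(d, x)$, we first select, by a canonical rule, a cut $\Pi = (V_1, \dotsc, V_r) \in \cC_Q$ with $\deficit_{\cC_Q}(\Pi; G) \le d$ and $\nu\bigl(\cF[G \cap \ext(\Pi)]\bigr) \le x$. Since every cut in $\cC_Q$ is compatible with $Q$, the subgraph $Q$ is left untouched by the switching, and the factor $p^{e(Q)}$ in the conclusion is simply the probability that $Q \subseteq G_{n,p}$, a common factor on both sides once one conditions on $Q \subseteq G_{n,p}$.

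We will then design a randomised switching procedure that takes $G$ together with a sequence of $O(d)$ random choices and outputs a graph $G^*$ obtained from $G$ by $O(d)$ edge additions and deletions lying in the vicinity of $\Pi$ (and disjoint from $Q$). The switching proceeds in two phases: a \emph{cut-raising} phase, where edges are added to $\ext(\Pi) \setminus G$ (and possibly a few edges are removed from $\int(\Pi) \cap G$) so that $\Pi$ becomes a largest $r$-cut of $G^*$; and a \emph{rigidification} phase, where further $O(d)$ modifications eliminate competing maximum cuts and force $G^*$ to be $\cC_Q$-rigid with $\core_{\cC_Q}(G^*)$ refining $(V_1, \dotsc, V_r)$, in the sense of \autoref{prop:rigid-has-core}. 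For each elementary switch, the probability ratio between $G$ and $G^*$ is at most $p_0/(1-p_0)$ per addition and $(1-p_0)/p_\star$ per deletion (for some $p_\star$ bounded away from $0$), and the number of admissible choices at each step is controlled by a polynomial in $n$ whose coefficient depends only on $\alpha$, $p_0$, and $r$. Accounting for these two effects across the $O(d)$ steps yields the factor $Z^d$.

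The hypothesis $\min\bigl\{\binom{n}{2}, |\partial\cF[G_{n,p}]|\bigr\} \le Dm/p$ enters in the backward direction of the double count: given $G^*$, the number of possible preimages $G$ is at most $D^d$, because each of the $O(d)$ added edges is constrained to lie in the support of $\partial\cF[G^*]$ up to the slack absorbed into the parameter $m$. The same bound controls how $\nu(\cF[\cdot])$ changes under switching: adding an edge $e$ increases the matching number of $\cF$ by at most $|\partial_e\cF[G]|$, so summing over the $\le d$ additions, and further accounting for the $O(d)$ possible deletions of edges in $\ext(\Pi)$, we gain at most $m + r^2(d+1)$, which is exactly the slack built into the definition of $\xi$. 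The error term $e^{-Kd\log(n^2p/d)}$ absorbs the exponentially rare event that the algorithm runs out of candidate edges at some step: since $d \ll n^2p$, the expected number of candidates per step is of order $n^2p/d$, and a per-step tail bound combined with a union bound over the $O(d)$-long trajectory yields this estimate for any prescribed constant $K$.

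The principal obstacle will be the clean execution of the rigidification phase, since the existence of a single largest cut does not by itself imply rigidity. We will need to argue, via a careful local-swap analysis, that $O(d)$ further edge modifications force at least $r$ pairwise-disjoint components of size $\ge (1-4r\alpha)\,n/r$ in the sense of \autoref{prop:rigid-has-core}, while simultaneously keeping the forward multiplicity bounded by $Z$ and the backward multiplicity bounded by $D$. Achieving the sharp error term $e^{-Kd\log(n^2p/d)}$, rather than a cruder $e^{-Kd}$ that a naive union bound would produce, will further require that each elementary step of the algorithm be analysed with its own tail estimate with carefully calibrated constants, rather than aggregating the failure probabilities a posteriori. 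This is precisely the point where our proof must depart most substantially from the clique argument of DeMarco and Kahn, in order to provide a bound strong enough to support the subsequent union bound over all $Q \in \cQ$ in~\autoref{sec:proof}.
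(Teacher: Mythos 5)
Your proposal correctly identifies this as a DeMarco--Kahn-style switching argument, and several pieces of your accounting (the factor $p^{e(Q)}$ being the conditioning on $Q$, the role of $D^d$ in controlling preimages, the per-step probability ratios) are pointed in the right direction. However, there is a genuine gap at the centre of the plan: you propose to reach a rigid graph via only $O(d)$ edge modifications, and that is not possible in general. Adding $d+1$ edges to $\ext(\Pi)\setminus G$ makes $\Pi$ a strict maximum over the finitely many cuts it was competing with, but rigidity requires that \emph{all} maximum cuts of the terminal graph $G^*$ agree on $(1-\alpha)n^2/(2r)$ pairs of vertices, and there can be exponentially many near-maximum cuts that are unaffected by any fixed set of $O(d)$ edge changes. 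The paper's algorithm does not terminate after $O(d)$ steps; it is allowed to run for up to $L = \Gamma K d\log(n^2p/d) \gg d$ steps, and the bulk of the steps are of a ``passive'' type that removes a uniformly random \emph{non-critical} internal edge whenever the current graph is not rigid. The crucial accounting insight is that these passive removals are plentiful in the preimage (about $\tfrac{(1-\alpha/2)(1-p)}{2r}n^2$ admissible backward choices) and also carry forward probability about $\tfrac{2r}{(1-\balpha)n^2p}$, and these two factors nearly cancel with a strict loss that accumulates to $e^{-\Omega(t)}$. The $e^{-Kd\log(n^2p/d)}$ error term is not a ``run out of candidates'' tail as you describe: it is exactly the bound on the event that the algorithm fails to terminate within $L$ steps, which follows because then the fraction $\tau$ of ``active'' (non-passive) steps is $\le 4r^2/\Gamma$, making the entropy term $\tau t\log(C_\star/\tau)$ dominated by the $-\lambda t$ loss from the passive steps.

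Two further concrete errors. First, you claim the slack $m + r^2(d+1)$ arises because ``adding an edge $e$ increases the matching number of $\cF$ by at most $|\partial_e\cF[G]|$.'' Adding any single edge changes a matching number by at most $1$; the paper's slack arises differently: the algorithm uses an explicit threshold $m$ on the number of elements of $\cF$ meeting $\int(\Pi)$ that are removable in the active removal step, and uses $e(\bF_t)\le r^2(d+1)$ for the auxiliary graph of added edges, and only those two quantities need to be deleted from a matching in $\cF[G^*\cap\ext(\core(G^*))]$ to get a matching inside $\cF[G_0\cap\ext(\Pi)]$. Second, you write that the modifications are ``disjoint from $Q$,'' but in fact the paper's step that adds edges does so specifically to connect $V^k(Q)$ to a core part $S_j$, and the bound $d\le\vmin(Q)\cdot\min\{np,\delta n/(2r^3)\}$ on $d$ exists precisely to control how many such edges are needed and how many choices exist for them. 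Your proposal would need, at minimum, to abandon the $O(d)$-modifications premise and build in a long passive phase with a termination-probability analysis in order to reach the stated error term.
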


\begin{proof}
  Let
  \[
    \cG \coloneqq \left\{G \subseteq K_n : G \in \cT \cap \cB_{Q, \cF}(d,x) \text{ and } \min\big\{\textstyle{\binom{n}{2}},  |\partial\cF[G]|\big\} \le Dm/p \right\};
  \]
  we may clearly assume that $\cG$ is nonempty, as otherwise there is nothing to prove.
  Further, set
  \[
    \balpha \coloneqq \frac{\alpha}{4}
    \qquad \text{and} \qquad
    \gamma \coloneqq \frac{\balpha}{6r},
  \]
  let $\Gamma$ be a large constant (that depends on $\alpha$, $\delta$, $p_0$, and $r$), and set
  \begin{align*}
    L \coloneqq \Gamma K d \log (n^2p/d) \ll n^2p.
  \end{align*}
  In what follows, maximum cuts, critical edges, cores, and deficits are defined relative to $\cC_Q$; in particular, we write $\core$ and $\maxcut$ in place of $\core_{\cC_Q}$ and $\maxcut_{\cC_Q}$.
  Further, following DeMarco and Kahn~\cite{DeMKah15Tur}, given a graph $G$, let
  \[
    \crit(G) \coloneqq \bigcap_{\Pi \in \maxcut(G)} \ext(\Pi) \cap G
  \]
  be the set of \emph{critical} edges of $G$ and note that $\crit(G) \supseteq \ext(\core(G)) \cap G$.

  Pick $\bG_0 \sim G_{n,p}$ conditioned on $G_{n,p} \in \cG$, so that, for every $G_0 \in \cG$,
  \begin{equation}
    \label{eq:bG0-distribution}
    \Pr(\bG_0 = G_0) = \frac{\Pr(G_{n,p} = G_0)}{\Pr(G_{n,p} \in \mathcal{G})}
  \end{equation}
  and let $\bF_0$ be the empty graph.  Further, let $\Pi \in \cC_Q$ be an arbitrary $r$-cut witnessing the fact that $\bG_0 \in \cB_{Q, \cF}(d,x)$.  
  We will now define a random sequence $\big((\bG_i, \bF_i)\big)_{i=0}^T$, where $T \le L$ is a random stopping time.

  \begin{alg}
    For $i = 0, \dotsc, L-1$, do the following. Let 
    \[
      X_i \coloneqq \{\omega \in \cF : \omega \subseteq \bG_i \cap \crit(\bG_i \cup \bF_i) \text{ and } \omega \cap \int(\Pi) \ne \emptyset\}.
    \]
    and let $U_i \coloneqq \bigcup_{\omega \in X_i} \omega \cap \int(\Pi)$.
    
    \begin{enumerate}[{label=(\alph*)}]
    \item
      \label{item:alg-remove-H-copies}
      If $|U_i| \ge m$, then set $\bG_{i+1} \coloneqq \bG_i \setminus e$ for a~uniformly chosen edge $e \in U_i$.
      
    \item
      \label{item:alg-remove-int}
      Otherwise, if $e\big(\crit(\bG_i \cup \bF_i) \cap \int(\Pi)\big) \ge \gamma n^2 p$, then set $\bG_{i+1} \coloneqq \bG_i \setminus e$ for a~uniformly chosen $e \in \big(\bG_i \cap \crit(\bG_i \cup \bF_i) \cap \int(\Pi)\big) \setminus Q$.
      
    \item
      \label{item:alg-remove-int-minus-crit}
      Otherwise, if $\bG_i \cup \bF_i$ is not rigid, then set $\bG_{i+1} \coloneqq \bG_i \setminus e$ for uniformly chosen $e \in \left(\bG_i \cap \int(\Pi)\right) \setminus (\crit(\bG_i \cup \bF_i) \cup Q)$.
      
    \item
      \label{item:alg-add-edge}
      Otherwise, if $\bG_i \cup \bF_i$ is rigid but $Q$ is not contained in the core, do the following.
      Suppose that the core is $\{S_1, \dotsc, S_r\}$ and let $k \in \br{r}$ be an index such that the (nonempty) set $V^k(Q)$ and some $S_j$ are in the same part of at least one but not all cuts in $\maxcut(\bG_i \cup \bF_i)$.
      We then set $\bF_{i+1} \coloneqq \bF_i \cup e$ for a uniformly chosen edge $e \in \ext(\Pi) \setminus (\bG_i \cup \bF_i)$ that connects $V^k(Q)$ and the union of all such $S_j$.
      
    \item
      \label{item:alg-end}
      Otherwise, $\bG_i \cup \bF_i$ is rigid and $Q$ is contained in the core.  Set $T=i$ and stop the algorithm.
    \end{enumerate}
    If the algorithm defined $\bG_L$ and $\bF_L$ (instead of stopping in~\ref{item:alg-end} with $i < L$), set $T = L$.
  \end{alg}

  A sequence $\big((G_i, F_i)\big)_{i=0}^t$ will be called \textit{legal} if it can be produced by the above algorithm, that is, if with nonzero probability $T = t$ and $(\bG_i, \bF_i) = (G_i, F_i)$ for all $i$.
  The following lemma describes several key properties of every legal sequence.
  
  \begin{lemma}
    \label{lemma:sequence-properties}
    Every legal sequence $\big((G_i, F_i)\big)_{i=0}^t$ has the following properties:
    \begin{enumerate}[label=(\roman*)]
    \item
      \label{item:sequence-properties-Q}
      The graph $G_t$ contains $Q$.
    \item
      \label{item: sequence-properties-number-of-edges}
      For every $i \in \{0, \dotsc, t\}$, we have $i = e(G_0) - e(G_i) + e(F_i)$ and $G_i \cap F_i = \emptyset$.
    \item
      \label{item: sequence-properties-steps-a-b}
      At most $d$ steps are of types~\ref{item:alg-remove-H-copies} or \ref{item:alg-remove-int}.
    \item
      \label{item:sequence-properites-add-edge}
      At most $r^2(d+1)$ steps are of type~\ref{item:alg-add-edge} and thus $e(F_i) \le r^2(d+1)$ for every $i$.
    \end{enumerate}
  \end{lemma}
  
  \begin{proof}
    The first item holds as $Q \subseteq G_0$ by the definition of $\cG$ and, while choosing an edge $e$ to be removed from $G_i$ while defining $G_{i+1}$ in steps~\ref{item:alg-remove-H-copies}--\ref{item:alg-remove-int-minus-crit}, we always make sure that $e \notin Q$ (recall that each graph in $\cF$ is disjoint from $Q$ by assumption).
    
    The second item holds as $F_0$ is empty and each step of the algorithm either removes an edge from $G_i$ or adds to $F_i$ an edge that does not belong to $G_i$.  This means, in particular, that $e(G_i) - e(G_{i+1}) + e(F_{i+1}) - e(F_i) = 1$ for each $i \in \{0, \dotsc, t-1\}$;
    the identity $i = e(G_0) - e(G_i) + e(F_i)$ easily follows, as $e(F_0) = 0$.

    For the third item, notice that every step of type~\ref{item:alg-remove-H-copies} or~\ref{item:alg-remove-int} removes a critical edge of $G_i \cup F_i$ that lies in $\int(\Pi)$ and therefore the deficit of $\Pi$ with respect to $G_i \cup F_i$ drops by one.
    Moreover, none of the other steps increases this deficit, as we either remove an edge from $\int(\Pi)$ or add an edge to $\ext(\Pi)$.
    Since the deficit of $\Pi$ with respect to $G_0 \cup F_0$ is at most $d$ and it is always nonnegative, there can be at most $d$ steps of types~\ref{item:alg-remove-H-copies} and~\ref{item:alg-remove-int}.

    The fourth item is an easy consequence of the following stronger property:
    Every block of consecutive steps of type~\ref{item:alg-add-edge} has length at most $r^2$ and the step immediately following it cannot be of type~\ref{item:alg-remove-int-minus-crit}.  To see this, assume first that the step $i \to i+1$ is of type~\ref{item:alg-add-edge}, that is, $G_i \cup F_i$ is rigid, say with core $\{S_1, \dotsc, S_r\}$, and $Q$ is not contained in the core.  Let $j, k \in \br{r}$ be the indices defined in~\ref{item:alg-add-edge}.  Observe that:
    \begin{enumerate}[label=(\arabic*)]
    \item
      \label{item:sequence-properties-1}
      Since the sets $V^k(Q)$ and $S_j$ are in different parts of some maximum cut in $G_i \cup F_i$, adding any edge connecting $V^k(Q)$ to $S_j$ can only shrink the set of maximum cuts; consequently $\maxcut(G_{i+1} \cup F_{i+1}) \subseteq \maxcut(G_i \cup F_i)$.
      Moreover, no cut that puts $V^k(Q)$ and $S_j$ in the same part can be a maximum cut in $G_{i+1} \cup F_{i+1}$.
    \item
      Further, suppose that $i' > i$ is such that all the steps $i \to \dotsb \to i'$ are of type~\ref{item:alg-add-edge}.
      As $\maxcut(G_{i'} \cup F_{i'}) \subseteq \maxcut(G_i \cup F_i)$, by~\ref{item:sequence-properties-1}, the graph $G_{i'} \cup F_{i'}$ is also rigid and the elements of its core can be labeled as $S_1', \dotsc, S_r'$ so that $S_j \subseteq S_j'$ for every $j \in \br{r}$.
      Further, $S_j' \supseteq S_j$ and $V^k(Q)$ cannot be in the same part of a maximum cut of $G_{i'} \cup F_{i'}$ and thus the pair $j$ and $k$ of indices cannot be chosen in step $i' \to i'+1$.
    \end{enumerate}

    The second of the above two observations implies that there can be at most $r^2$ steps of type~\ref{item:alg-add-edge} in a row.  Moreover, since the graph $G_{i+1} \cup F_{i+1}$ produced by a step $i \to i+1$ of type~\ref{item:alg-add-edge} is rigid (as $G_i \cup F_i$ is rigid and $\maxcut(G_{i+1} \cup F_{i+1}) \subseteq \maxcut(G_i \cup F_i)$), the step $i+1 \to i+2$ cannot be of type~\ref{item:alg-remove-int-minus-crit}.
  \end{proof}

  Denote by $T_a$, $T_b$, $T_c$, and $T_d$ the random variables counting the number of steps of types \ref{item:alg-remove-H-copies}, \ref{item:alg-remove-int}, \ref{item:alg-remove-int-minus-crit}, and \ref{item:alg-add-edge}, respectively, in the sequence $\big((\bG_i, \bF_i)\big)_{i=0}^T$ defined above.  Further, let $T_d^1, \dotsc, T_d^r$ be the random variables counting the number of times that a step of type~\ref{item:alg-add-edge} was executed with $k = 1, \dotsc, r$, respectively, so that $T_d = T_d^1 + \dotsb + T_d^r$.
  Since each of these random variables takes values in $\{0, \dotsc, L\}$ and $\sum_{t=0}^L (2(t+1))^{-2} \le 1$, there are $t_a$, $t_b$, $t_c$, and $t_d^1, \dotsc, t_d^r$ such that
  \begin{multline*}
    \Pr\big((T_a, T_b, T_c, T_d^1, \dotsc, T_d^r) = (t_a, t_b, t_c, t_d^1, \dotsc, t_d^r)\big) \\
    \ge \left(2^{r+3}(t_a+1)(t_b+1)(t_c+1)(t_d^1+1)\dotsb(t_d^r+1)\right)^{-2} \ge (2t+2r+6)^{-2r-6},
  \end{multline*}
  where $t \coloneqq t_a + t_b + t_c + t_d^1 + \dotsb + t_d^r$.
  Denote the above event by $W$ and let $\cS$ the corresponding set of legal sequences.  For every $\Sigma \in \cS$, denote by $A_\Sigma$ the event that $\Sigma$ was produced by the algorithm.  Finally, write $t_d \coloneqq t_d^1 + \dotsb + t_d^r$, note that $t = t_a + t_b+t_c+t_d$ is the common length of all the sequences in $\cS$ (minus one), and define
  \begin{align*}
    \cE \coloneqq \left\{G_t \cup F_t : \big((G_i, F_i)\big)_{i=0}^t \in \cS\right\}.
  \end{align*}

  The following two lemmas imply the assertion of the proposition.
  
  \begin{lemma}
    \label{lemma:alg-rigid-graph}
    If $t < L$, then every $G \in \cE$ is rigid and satisfies
    \[
      \nu(\cF[G \cap \ext(\core(G))]) \le x + m +r^2(d+1).
    \]
  \end{lemma}

  \begin{lemma}
    \label{lemma:alg-prob-estimate}
    We have
    \[
      \Pr(G_{n,p} \in \cG) \le p^{e(Q)} \cdot D^d \cdot
      \begin{cases} 
        Z^d \cdot \Pr(G_{n,p} \in \cE \mid Q \subseteq G_{n,p}), &  t < L, \\
        e^{-K d \log (n^2p/d)}, &  t=L.
      \end{cases}
    \]
  \end{lemma}

  \begin{proof}[Proof of \autoref{lemma:alg-rigid-graph}]
    Let $G \in \cE$ and write $G = G_t \cup F_t$, where $(G_t, F_t)$ is the terminal element of some sequence in $\cS$.  Let $\cM$ be a largest matching in $\cF[G \cap \ext(\core(G))]$. Remove from $\cM$ all $\omega \in \cF$ that intersect $F_t$ and denote the resulting matching by $\cM'$. Since $e(F_t) \le r^2 (d+1)$, by \autoref{lemma:sequence-properties}~\ref{item:sequence-properites-add-edge}, we have $|\cM'| \ge |\cM| - r^2 (d+1)$.  Further, remove from $\cM'$ all $\omega \in \cF$ that intersect $\int(\Pi)$ and denote the resulting matching by $\cM''$.  Since $(G_t, F_t)$ does not satisfy the condition of step~\ref{item:alg-remove-H-copies} in the algorithm, every matching in $\cF[G_t \cap \ext(\core(G))] \subseteq \cF[G_t \cap \crit(G)]$ contains fewer than $m$ subgraphs that intersect $\int(\Pi)$ and hence $|\cM''| \ge |\cM'| - m \ge |\cM| - r^2(d+1) - m$.  Note that $\cM''$ is a matching in $\cF[G_t \cap \ext(\Pi)] \subseteq \cF[G_0 \cap \ext(\Pi)]$ and thus $|\cM''| \le x$, as $G_0 \in \cB_{Q, \cF}(d, x)$ and $\Pi$ witnesses this fact.  It follows that
    \[
      \nu(\cF[G \cap \ext(\core(G))]) = |\cM| \le |\cM''| + m + r^2(d+1) \le x + m + r^2 (d+1),
    \]
    as desired.
  \end{proof}

  \begin{proof}[Proof of \autoref{lemma:alg-prob-estimate}]
    Recall the definitions from the paragraph preceding the statement of \autoref{lemma:alg-rigid-graph}.  We have
    \begin{equation}
      \label{eq:PrW-sum-over-Sigma}
      (2t+2r+6)^{-2r-6} \le \Pr(W)  = \sum_{\Sigma = ((G_i, F_i))_i \in \cS} \Pr(\bG_0 = G_0) \cdot \Pr(A_\Sigma \mid \bG_0 = G_0).
    \end{equation}
    Since, for every sequence $\big((G_i, F_i)\big)_{i=0}^t \in \cS$,
    \[
      e(G_t \cup F_t) = e(G_t) + e(F_t) = e(G_0) + 2e(F_t) - t = e(G_0) + 2t_d - t,
    \]
    see~\autoref{lemma:sequence-properties}~\ref{item: sequence-properties-number-of-edges}, we have
    \[
      \Pr(\bG_0 = G_0) \stackrel{\eqref{eq:bG0-distribution}}{=} \frac{\Pr(G_{n,p} = G_0)}{\Pr(G_{n,p} \in \cG)} = \frac{\Pr(G_{n,p} = G_t \cup F_t)}{\Pr(G_{n,p} \in \cG)} \cdot \left(\frac{p}{1-p}\right)^{t-2t_d}.
    \]
    Consequently, multiplying both sides of~\eqref{eq:PrW-sum-over-Sigma} by $\Pr(G_{n,p} \in \cG)$, we obtain
    \begin{equation}
      \label{eq:Pr-cG-upper}
      \begin{split}
        \frac{\Pr(G_{n,p} \in \cG)}{(2t+2r+6)^{2r+6}} & \le \left(\frac{p}{1-p}\right)^{t-2t_d} \sum_{\Sigma = ((G_i, F_i))_i \in \cS} \Pr(G_{n,p} = G_t \cup F_t) \cdot \Pr(A_\Sigma \mid \bG_0 = G_0) \\
        & \le \left(\frac{p}{1-p}\right)^{t-2t_d} \cdot \Pr(G_{n,p} \in \cE) \cdot \underbrace{\max_{G \in \cE} \sum_{\substack{\Sigma = ((G_i, F_i))_i \in \cS \\ G_t \cup F_t = G}} \Pr(A_\Sigma \mid \bG_0 = G_0)}_{\star}.
      \end{split}
    \end{equation}
    
    The next two claims will allow us to bound the maximum in the right-hand side of the above inequality.
    Set $t_{\bar{c}} \coloneqq t_a + t_b + t_d = t-t_c$ and $N \coloneqq n^2/2$.
    
    \begin{claim}
      \label{claim:upper-bound-for-sequences}
      For each $G \in \cE$, the number of sequences $\big((G_i, F_i)\big)_{i=0}^t \in \cS$ such that $G_t \cup F_t = G$ is at most
      \[
        \left(\frac{3et}{t_{\bar{c}}}\right)^{t_{\bar{c}}} \left(\frac{Dm}{p}\right)^{t_a} N^{t_b+t_{\bar{c}}} \left(\frac{(1-\alpha/2)N(1-p)}{r}\right)^{t_c-t_{\bar{c}}} \prod_{k=1}^r \big(2r^3e|V^k(Q)|np\big)^{t_d^k}.
      \]
    \end{claim}

    \begin{claim}
      \label{claim:Pr-Sigma}
      For every $\Sigma = \big((G_i, F_i)\big)_{i=0}^t \in \cS$,
      \begin{align*}
        \Pr(A_\Sigma \mid \bG_0 = G_0) \le \left(\frac{1}{m}\right)^{t_a} \left(\frac{1}{\gamma N p}\right)^{t_b} \left(\frac{r}{(1-\balpha)Np}\right)^{t_c} \prod_{k=1}^r \left(\frac{4r}{(1-p_0)|V^k(Q)|n}\right)^{t_d^k}.
      \end{align*}
    \end{claim}

    Before proving \autoref{claim:upper-bound-for-sequences} and~\autoref{claim:Pr-Sigma}, we first show how they imply the assertion of the lemma.  By the two claims, the maximum $\star$ in the right-hand side of~\eqref{eq:Pr-cG-upper} can be bounded from above as follows (recall that $t_{\bar{c}} = t_a+t_b+t_d$):
    \[
      \begin{split}
        \star & \le \left(\frac{3et}{t_{\bar{c}}}\right)^{t_{\bar{c}}} \left(\frac{D}{p}\right)^{t_a} \left(\frac{1}{\gamma p}\right)^{t_b} \left(\frac{(1-\alpha/2)(1-p)}{(1-\balpha)p}\right)^{t_c} \left(\frac{r}{(1-\alpha/2)(1-p)}\right)^{t_{\bar{c}}} \left(\frac{8r^4e p}{1-p_0}\right)^{t_d} \\
        & \le \left(\frac{C_\star t}{t_{\bar{c}}}\right)^{t_{\bar{c}}} D^{t_a} \left(\frac{1-\alpha/2}{1-\balpha}\right)^t  \cdot \left(\frac{1-p}{p}\right)^{t_a+t_b+t_c-t_d},
      \end{split}
    \]
    where $C_\star$ depends only on $\alpha$, $\balpha$, $\gamma$, $p_0$, and $r$ (recall our assumption that $p \le p_0 < 1$).  Substituting this estimate back into~\eqref{eq:Pr-cG-upper} and using the identity $t_a + t_b + t_c - t_d = t - 2t_d$, we obtain
    \[
      \frac{\Pr(G_{n,p} \in \cG)}{(2t+2r+6)^{2r+6}} \le \left(\frac{C_\star t}{t_{\bar{c}}}\right)^{t_{\bar{c}}} D^{t_a} \left(\frac{1-\alpha/2}{1-\balpha}\right)^t \cdot \Pr(G_{n,p} \in \cE).
    \]
    Since $t_a \le d$, by \autoref{lemma:sequence-properties}~\ref{item: sequence-properties-steps-a-b}, denoting
    \[
      \tau \coloneqq \frac{t_{\bar{c}}}{t}
      \qquad
      \text{and}
      \qquad
      \lambda \coloneqq \log\left(\frac{1-\balpha}{1-\alpha/2}\right) > 0,
    \]
    we obtain
    \begin{equation}
      \label{eq:Pr-Gnp-G-final}
      \frac{\Pr(G_{n,p} \in \cG)}{\Pr(G_{n,p} \in \cE)} \le D^d \cdot (2t+2r+6)^{2r+6} \cdot \exp\left(\tau t \log \frac{C_\star}{\tau} - \lambda t\right).
    \end{equation}
    Observe that, since $x \log (1/x) \to 0$ as $x \to 0$, there must be some $\tau_0 = \tau_0(C_\star, \lambda) = \tau_0(\alpha, \balpha, \gamma, p_0, r)$ such that $\tau \log(C_\star/\tau) \le \lambda/2$ for all $\tau \le \tau_0$.
    We also note that items~\ref{item: sequence-properties-steps-a-b} and~\ref{item:sequence-properites-add-edge} in \autoref{lemma:sequence-properties} imply that
    \begin{equation}
      \label{eq:tau-upper}
      \tau t = t_{\bar{c}} = t_a +t_b+t_d \le (r^2+1)(d+1) \le 4r^2d.
    \end{equation}
    In order to estimate the right-hand side of~\eqref{eq:Pr-Gnp-G-final}, we consider two cases.

    \smallskip
    \noindent
    \textit{Case 1 ($t = L$).}
    In this case, $\tau \le 4r^2/\Gamma \le \tau_0$, since $\Gamma \ge \Gamma(\tau_0, r) = \Gamma(\alpha, \balpha, \gamma, p_0, r)$.
    Consequently,
    \[
      \exp\left(\tau t \log \frac{C_\star}{\tau} - \lambda t\right) \le \exp\left(-\frac{\lambda t}{2}\right)
      = \exp\left(-\frac{\lambda L}{2}\right),
    \]
    Finally, as $L = \Gamma K d \log(n^2p/d) \ge \Gamma$ and $\Gamma \ge \Gamma(\lambda,r) = \Gamma(\alpha, \balpha, r)$, we have, by~\eqref{eq:Pr-Gnp-G-final},
    \[
      \Pr(G_{n,p} \in \cG)\le D^d \cdot (2L+2r+6)^{2r+6} \cdot e^{-\lambda L /2} \le D^d \cdot e^{-\lambda L / 3} \le D^d \cdot e^{-Kd \log(n^2p/d)}.
    \]
    
    \smallskip
    \noindent
    \textit{Case 2 ($t < L$).}
    Since $\tau \log(C_\star/\tau) \le \lambda/2$ when $\tau \le \tau_0$ and $\log(C_\star/\tau) \le \log(C_\star/\tau_0)$ otherwise,
    \[
      \exp\left(\tau t \log \frac{C_\star}{\tau} - \lambda t\right) \le \exp\left(\tau t \log\frac{C_\star}{\tau_0}-\frac{\lambda t}{2}\right)
      \le \left(\frac{C_\star}{\tau_0}\right)^{4r^2d} \exp\left(-\frac{\lambda t}{2}\right),
    \]
    where the last inequality uses~\eqref{eq:tau-upper}.
    Finally, since $(2t+2t+6)^{2r+6} \exp(-\lambda t/2)$ is bounded by a constant that depends only on $r$ and $\lambda$, we may conclude from~\eqref{eq:Pr-Gnp-G-final} that
    \[
      \Pr(G_{n,p} \in \cG) \le D^d \cdot Z^d \cdot \Pr(G_{n,p} \in \cE)
    \]
    for some $Z = Z(\alpha, \balpha, \gamma, p_0, r)$.  Finally, since every graph in $\cE$ contains $Q$, by \autoref{lemma:sequence-properties}~\ref{item:sequence-properties-Q}, we have $\Pr(G_{n,p} \in \cE) = p^{e(Q)} \cdot \Pr(G_{n,p} \in \cE \mid Q \subseteq G_{n,p})$
    
    \begin{proof}[Proof of \autoref{claim:upper-bound-for-sequences}]
      Fix some $G \in \cE$.  We bound the number of different ways to construct a sequence $\big((G_i, F_i)\big)_{i=0}^t \in \cS$ such that $G = G_t \cup F_t$.

      First, we should choose the types of the $t$ steps $0 \to \dotsc \to t$ in the above algorithm.  There are $t_{\bar{c}}$ steps of type other than~\ref{item:alg-remove-int-minus-crit} and thus there are at most $\binom{t}{t_{\bar{c}}} 3^{t_{\bar{c}}}$ choices for all the types.

      Second, we bound the number of different choices for the sequence $(F_i)_{i=0}^t$.  In order to choose the partition $G = G_t \cup F_t$, we need to decide which $t_d$ edges of $G$ belong to $F_t$.  By construction, $F_t$ can be partitioned into sets of sizes $t_d^1, \dotsc, t_d^r$ such that the $k$th part contains only edges incident with $V^k(Q)$.
      Since $G_t \subseteq G_0 \in \cT$, the number of such edges is at most
      \[
        |V^k(Q)| \cdot \Delta(G_t) + e(F_t)
        \le |V^k(Q)| \cdot 2np + r^2(d+1) \le 2r^2 |V^k(Q)|np,
      \]
      where the first inequality follows from \autoref{lemma:sequence-properties}~\ref{item:sequence-properites-add-edge} and the second inequality is true because $d \le \vmin(Q) \cdot np$.
      Since the sequence $(F_i)_{i=0}^t$ increases by one edge at steps $i \to i+1$ that were designated to be of type~\ref{item:alg-add-edge} and remains constant otherwise, the number of possible sequences $(F_i)_{i=0}^t$ is at most
      \[
        \binom{t_d}{t_d^1, \dotsc, t_d^r} \cdot \prod_{k=1}^r  \big(2r^2|V^k(Q)|np\big)^{t_d^k} \le \prod_{k=1}^r \big(2r^3|V^k(Q)|np\big)^{t_d^k}.
      \]

      Finally, we bound the number of ways to choose the sequence $(G_i)_{i=0}^t$.  Recall that this sequence remains constant at steps $i \to i+1$ that were designated to be of type~\ref{item:alg-add-edge} and decreases by one edge at all the remaining steps.  We may thus reconstruct it from the knowledge of $G_t$ as follows:
      \begin{itemize}
      \item
        For any $i$, the number of ways to build $G_i$ from $G_{i+1}$ is not more than $N$.
      \item
        If the step $i \to i+1$ is of type~\ref{item:alg-remove-H-copies}, then in order to build $G_i$ from $G_{i+1}$, we need to add an edge which completes an element of $\cF$ with the rest of $G_{i+1}$.
        There are clearly no more than $\min\big\{\binom{n}{2}, |\partial \cF[G_{i+1}]|\big\}$ such edges;
        since $G_{i+1} \subseteq G_0 \in \cG$, and thus $\partial \cF[G_{i+1}] \subseteq \partial \cF[G_0]$, this number is at most $Dm/p$.
      \item
        If both the steps $i \to i+1 \to i+2$ are of type~\ref{item:alg-remove-int-minus-crit}, then $G_{i+1} \cup F_{i+1}$ is not rigid and $b(G_i \cup F_i) = b(G_{i+1} \cup F_{i+1})$, as the edge removed from $G_i$ while forming $G_{i+1}$ was not critical in $G_i \cup F_i$.
        This means that, in order to build $G_i$ from $G_{i+1}$, we need to add an edge so that $b(G_{i+1} \cup F_{i+1})$ does not increase, i.e., an edge whose both endpoints belong to the same $(\cC_Q, G_{i+1} \cup F_{i+1})$-component (see \autoref{sec:rigidity-correlation}).
        In other words, denoting by $\Pi'$ the set of such components, we must add an edge of $\int(\Pi')$.
        Since $G_{i+1} \cup F_{i+1}$ is not rigid, we have $e(\int(\Pi')) \le (1-\alpha)n^2/(2r)$ and thus the number of possible edges is at most
        \[
          e\big(\int(\Pi') \setminus G_{i+1}\big) \le e(\int(\Pi')) - e\big(G_0 \cap \int(\Pi')\big) + e(G_0) - e(G_{i+1}).
        \]
        Since $G_0 \in \cG \subseteq \cT$, we have $e\big(G_0 \cap \int(\Pi')\big) = e(\int(\Pi')) \cdot p \pm o(n^2p)$.
        Furthermore, $e(G_0) - e(G_{i+1}) \le i+1 \le L \ll n^2p$.
        Consequently,
        \[
          e\big(\int(\Pi') \setminus G_{i+1}\big) \le e(\int(\Pi')) \cdot (1-p) + o(n^2p) \le (1-\alpha/2)n^2/(2r) \cdot (1-p),
        \]
        where the last inequality follows as $1-p \ge 1-p_0 > 0$.
      \end{itemize}
      Summarising, since there are at least $t_c-t_{\bar{c}}$ indices $i$ such that the steps $i \to i+1 \to i+2$ are both of type~\ref{item:alg-remove-int-minus-crit}, we may bound the number of sequences $(G_i)_{i=0}^t$ by
      \[
        \left(\frac{Dm}{p}\right)^{t_a} \cdot \left(\frac{(1-\alpha/2)n^2(1-p)}{2r}\right)^{t_c-t_{\bar{c}}} \cdot N^{t_b+t_{\bar{c}}}.
      \]
      
      The claimed bound now follows by multiplying the three above bounds, for the numbers of choices of steps and the sequence $(F_i)_{i=0}^t$ and $(G_i)_{i=0}^t$, and using the standard estimate $\binom{t}{t_{\bar{c}}} \le (et/t_{\bar{c}})^{t_{\bar{c}}}$.
    \end{proof}

    Since, for every $i$, the random pair $(\bG_{i+1}, \bF_{i+1})$ is obtained from $(\bG_i, \bF_i)$ by removing from $\bG_i$ or adding to $\bF_i$ an edge chosen uniformly from some collection, in order to prove~\autoref{claim:Pr-Sigma}, it suffies to establish the following lower bounds on the sizes of these collections, depending on the type of the step.

    \begin{subclaim}
      For every step of type~\ref{item:alg-remove-H-copies}, there are at least $m$ choices.
    \end{subclaim}
    
    \begin{subclaim}
      For every step of type~\ref{item:alg-remove-int}, there are at least $\gamma n^2 p / 2$ choices.
    \end{subclaim}
    \begin{proof}
      If $(\bG_i, \bF_i)$ falls into~\ref{item:alg-remove-int}, then the number of choices for the edge to be removed from $\bG_i$ is at least
      \begin{multline*}
        e\left(\big(\bG_i \cap \crit(\bG_i \cup \bF_i) \cap \int(\Pi)\big) \setminus Q\right) \\
        \ge e\big(\crit(\bG_i \cup \bF_i) \cap \int(\Pi)\big) - e(\bF_i) - e(Q) \ge \gamma n^2p - e(\bF_i) - e(Q).
      \end{multline*}
      Finally, our assumptions and \autoref{lemma:sequence-properties}~\ref{item:sequence-properites-add-edge} imply that $e(\bF_i) \le r^2(d+1) \ll n^2p$ and $e(Q) \ll n^2p$.
    \end{proof}
    
    \begin{subclaim}
      For every step of type~\ref{item:alg-remove-int-minus-crit}, there are at least $\frac{(1-\balpha)n^2p}{2r}$ choices.
    \end{subclaim}
    \begin{proof}
      If $(\bG_i, \bF_i)$ falls into~\ref{item:alg-remove-int-minus-crit}, then the number of choices for the edge to be removed from $\bG_i$ is at least
      \[
        e\big(\bG_i \cap \int(\Pi)\big) - e\big(\crit(\bG_i \cup \bF_i ) \cap \int(\Pi)\big) - e(Q).
      \]
      Since $\bG_0 \in \cT$, we have
      \[
        e\big(\bG_i \cap \int(\Pi)\big) \ge e\big(\bG_0 \cap \int(\Pi)\big) - i \ge e(\int(\Pi)) \cdot p - o(n^2p) - i.
      \]
      Crucially, since $(\bG_i, \bF_i)$ did not fall into~\ref{item:alg-remove-int}, we have $e\big(\crit(\bG_i \cup \bF_i) \cap \int(\Pi)\big) < \gamma n^2p$.
      Finally, since $i + e(Q) \le L + e(Q) \ll n^2p$ and $\Pi \in \cC_Q$ is balanced (and $\delta$ is small as a function of $\alpha$ and thus also of $\gamma$),
      we may conclude that the number of choices is at least
      \[
        e(\int(\Pi)) p - 2 \gamma n^2 p \ge \frac{n^2p}{2r} - 3\gamma n^2p = (1-\balpha) \cdot \frac{n^2p}{2r},
      \]
      as (sub)claimed.
    \end{proof}

    \begin{subclaim}
      For every step of type~\ref{item:alg-add-edge}, there are at least $(1-p_0)|V^k(Q)|n/(4r)$ choices.
    \end{subclaim}
    \begin{proof}
      Suppose that $(\bG_i, \bF_i)$ falls into~\ref{item:alg-add-edge}, that is, that $\bG_i \cup \bF_i$ is rigid, with core $\{S_1, \dotsc, S_r\}$ and $Q$ is not contained in the core.
      This means that there are indices $j, k \in \br{r}$ such that the nonempty set $V^k(Q)$ and $S_j$ are in the same part of some but not all maximum cuts of $\bG_i \cup \bF_i$.
      Note crucially that, for each such $k$, there must be at least two different indices $j$;
      indeed, if $V^k(Q)$ and $S_j$ are not in the same part of some maximum cut, then $V^k(Q)$ shares its part with some $S_{j'}$ with $j' \neq j$.

      We now argue that $\min\{|S_j \cap V_k|, |S_{j'} \cap V_k|\} < \sqrt{2 \gamma} n$,
      where $V_k \supseteq V^k(Q)$ is the $k$th set in the (ordered) $r$-cut $\Pi$.
      If this were not true, then $\ext\big(\{S_j \cap V_k, S_{j'} \cap V_k\}\big)$ would be a set of at least $2\gamma n^2$ edges of $\int(\Pi)$.
      Moreover, since $\bG_0 \in \cT$, we would have
      \[
        e\big(\ext(\{S_j \cap V_k, S_{j'} \cap V_k\}) \cap \bG_0\big) \ge 2\gamma n^2p - o(n^2p)
      \]
      and thus
      \[
        e\big(\ext(\{S_j \cap V_k, S_{j'} \cap V_k\}) \cap \bG_i\big) \ge 2\gamma n^2p - i - o(n^2p) \ge \gamma n^2p.
      \]
      However, $\ext(\{S_j \cap V_k, S_{j'} \cap V_k\}) \cap \bG_i \subseteq \crit(\bG_i \cup \bF_i) \cap \int(\Pi)$, contradicting the assumption that the step $i \to i+1$ did not trigger case~\ref{item:alg-remove-int} in our algorithm.

      We may thus assume, without loss of generality, that $|S_j \cap V_k| < \sqrt{2\gamma} n$ and thus
      \begin{equation}
        \label{eq:Sj-Vk-lower}
        |S_j \setminus V_k| = |S_j| - |S_j \cap V_k| \ge \left(1-4r\alpha - \sqrt{2\gamma}r\right)\frac{n}{r} \ge \frac{n}{2r},
      \end{equation}
      where the last inequality holds as $\alpha$ (and thus $\gamma$) is small.      
      Since we may add to~$\bF_i$ any edge of $\ext(\{V^k(Q), S_j \setminus V_k\}) \setminus (\bG_i \cup \bF_i)$ and
      \[
        e(\ext(\{V^k(Q), S_j \setminus V_k\})) = |V^k(Q)| \cdot |S_j \setminus V_k| \ge |V^k(Q)| \cdot n/(2r),
      \]
      it is enough to argue that
      \begin{equation}
        \label{eq:bGbF-density-VkQ-Sj}
        e\big(\ext(\{V^k(Q), S_j \setminus V_k\})  \cap (\bG_i \cup \bF_i)\big) \le (1+p_0)/2 \cdot e\big(\ext(\{V^k(Q), S_j \setminus V_k\})\big).
      \end{equation}
      
      To this end, observe that
      \[
        e\big(\ext(\{V^k(Q), S_j \setminus V_k\})  \cap (\bG_i \cup \bF_i)\big) \le e\big(\ext(\{V^k(Q), S_j \setminus V_k\}) \cap \bG_0\big)+ e(\bF_i)
      \]
      and that, by~\autoref{lemma:sequence-properties}~\ref{item:sequence-properites-add-edge},
      \begin{equation}
        \label{eq:ebFi-upper}
        e(\bF_i) \le r^2(d+1) \le \vmin(Q) \cdot 2r^2d \le |V^k(Q)| \cdot \min\big\{2r^2np, \delta n / r\big\}.
      \end{equation}
      Consequently, the left-hand side of~\eqref{eq:bGbF-density-VkQ-Sj} is clearly not larger than $|V^k(Q)| \cdot (\Delta(\bG_0) + 2r^2np)$.
      Since $\Delta(\bG_0) \le 2np$, as $\bG_0 \in \cG \subseteq \cT$, the desired inequality holds if $p \le 1/(10r^3)$, say.
      We may thus assume for the remainder of the proof that $p > 1/(10r^3)$.
      If $|V^k(Q)| \gg 1$, then the assumption that $\bG_0 \in \cT$ implies that
      \[
        e\big(\ext(\{V^k(Q), S_j \setminus V_k\}) \cap \bG_0\big) \le  (1+o(1)) \cdot e\big(\ext(\{V^k(Q), S_j \setminus V_k\}) \big) \cdot p,
      \]
      which again implies~\eqref{eq:bGbF-density-VkQ-Sj}, as $e(\bF_i) \le r^2(d+1) \le \vmin(Q) \cdot \delta n / r \le |V^k(Q)| \cdot \delta n /r$, $p \le 1-p_0$, and $\delta \le \delta(r,p_0)$, so we may further assume that $|V^k(Q)| = O(1)$.
      But this means that $Q \in \cQ_L$ (every graph in $\cQ_H$ has $\Omega(np)$ vertices) and thus $k=1$ and $V^2(Q) = \dotsc = V^r(Q) = \emptyset$.
      
      Here, our argument is somewhat more involved:
      We will show that~\eqref{eq:bGbF-density-VkQ-Sj} must hold, or otherwise a~cut that places $V^1(Q)$ and $S_j$ in the same part cannot be a largest cut in $\bG_i \cup \bF_i$.      
      Let $\Pi' = (V_1', \dotsc, V_r') \in \maxcut(\bG_i \cup \bF_i)$ be an arbitrary maximum cut that puts $V^1(Q)$ and $S_j$ in the same part $V_1'$ and let $\Pi''$ be a cut obtained from $\Pi'$ by choosing $\ell \in \br{r}$ uniformly at random and swapping $V^1(Q)$ and a uniformly chosen random subset $R \subseteq V_\ell'$ of the same size (so that $\Pi'' = \Pi'$ with probability $1/r$).
      Since $\Pi''$ is compatible with $Q$ (for some ordering of its colour classes) and its colour classes have the same sizes as the colour classes of $\Pi'$, it belongs to $\cC_Q$ with probability one.
      Since $\Pi' \in \maxcut(\bG_i \cup \bF_i)$, the difference
      \[
        e\big(\ext(\Pi'') \cap (\bG_i \cup \bF_i)\big) - e\big(\ext(\Pi') \cap (\bG_i \cup \bF_i)\big)
      \]
      is nonpositive.
      This difference can be rewritten as (letting $U \coloneqq V^1(Q)$)
      \begin{multline*}
        \overbrace{e\big(\ext(\{U, V_1' \setminus U\}) \cap (\bG_i \cup \bF_i)\big)}^{E_1} - \overbrace{e\big(\ext(\{U,V_\ell' \setminus U\}) \cap (\bG_i \cup \bF_i)\big)}^{E_2} \\
        +\underbrace{e\big(\ext(\{R, V_\ell' \setminus R\}) \cap (\bG_i \cup \bF_i)\big)}_{E_3} - \underbrace{e\big(\ext(\{R,V_1' \setminus R\}) \cap (\bG_i \cup \bF_i)\big)}_{E_4}.
      \end{multline*}
      
      We now estimate the expected values of $E_1, \dotsc, E_4$.
      First, since we assumed that the sets $U$ and $S_j$ are disjoint and both contained in $V_1'$, we have
      $E_1 \ge e\big(\ext(\{U, S_j\})  \cap (\bG_i \cup \bF_i)\big)$.  Second, since each vertex appears in $V_\ell'$ with probability exactly $1/r$, we have
      \[
        \Ex[E_2] = \frac{e\big(\ext(\{U, V \setminus U\}) \cap (\bG_i \cup \bF_i)\big)}{r} \le \frac{|U| \cdot \Delta(\bG_0) + e(\bF_i)}{r} \le \frac{|U| \cdot (np + \delta n + o(n))}{r},
      \]
      where the last inequality follows as $\Delta(\bG_0) \le np + o(n)$, since $\bG_0 \in \cG \subseteq \cT$, and $e(\bF_i) \le |U| \cdot \delta n / r$, by~\eqref{eq:ebFi-upper}.
      Third, since for every $j \in \br{r}$,
      \[
        \sum_{v \in R} \deg_{\bG_i \cup \bF_i}(v, V_j') - |R|^2 \le  e\big(\ext(\{R,V_j' \setminus R\}) \cap (\bG_i \cup \bF_i)\big) \le \sum_{v \in R} \deg_{\bG_i \cup \bF_i}(v, V_j')
      \]
      and, recalling that $R$ is a uniformly chosen random $|U|$-element subset of $V_\ell'$,
      \[
        \Ex\left[\sum_{v \in R} \deg_{\bG_i \cup \bF_i}(v, V_j') \mid \ell \right] = \frac{|U|}{|V_\ell'|} \cdot \sum_{v \in V_\ell'} \deg_{\bG_i \cup \bF_i}(v, V_j'),
      \]
      we have
      \[
        \begin{split}
          \frac{\Ex[E_3-E_4]}{|U|}
          & \ge \frac{r-1}{r} \cdot \min_{j \neq 1} \left\{\frac{2e\big(\int(\{V_j'\}) \cap (\bG_i \cup \bF_i)\big)}{|V_j'|}  - \frac{e\big(\ext(\{V_1', V_j'\}) \cap (\bG_i \cup \bF_i)\big)}{|V_j'|}-|U|\right\} \\
          & \ge \frac{r-1}{r} \cdot \min_{j \neq 1} \frac{2e\big(\int(\{V_j'\}) \cap \bG_0\big) - e\big(\ext(\{V_1', V_j'\}) \cap \bG_0\big) - e(\bF_i) - 2i}{|V_j'|} - |U|.
        \end{split}
      \]
      Further, as $\bG_0 \in \cG \subseteq \cT$ and $e(\bF_i) \le i \le L \ll n^2p$,
      \[
        \frac{\Ex[E_3-E_4]}{|U|} \ge \frac{r-1}{r} \cdot \min_{j \neq 1} \left\{ |V_j'| \cdot p - |V_1'| \cdot p  - o(n) \right\} \ge -\frac{2\delta n}{r},
      \]
      where the second inequality follows since $\Pi'$ is a balanced cut.

      Recalling that $E_1-E_2+E_3-E_4$ is always nonpositive, we may conclude that
      \[
        e\big(\ext(\{U, S_j\})  \cap (\bG_i \cup \bF_i)\big) \le E_1 \le \Ex[E_2-E_3+E_4] \le \frac{|U| \cdot (np+3\delta n + o(n))}{r}
      \]
      and thus, by~\eqref{eq:Sj-Vk-lower},
      \[
        e\big(\ext(\{U, S_j\})  \cap (\bG_i \cup \bF_i)\big) \le \frac{p+3\delta+o(1)}{1-2r\alpha-\sqrt{2\gamma}} \cdot |U| \cdot |S_j \setminus V_k|.
      \]
      Since $p \le p_0 < 1$ and $\alpha$ (and thus $\gamma$) and $\delta$ are sufficiently small with respect to $1-p_0$ and $r$, we obtain
      \[
        e\big(\ext(\{U, S_j\})  \cap (\bG_i \cup \bF_i)\big) \le \frac{1+p_0}{2} \cdot |U| \cdot |S_j \setminus V_k| = \frac{1+p_0}{2} \cdot e\big(\ext(\{U, S_j \setminus V_k\})\big),
      \]
      which, recalling that $U = V^k(Q)$, implies~\eqref{eq:bGbF-density-VkQ-Sj}.
    \end{proof}
    The proof of~\autoref{claim:Pr-Sigma} and thus of \autoref{lemma:alg-prob-estimate} is finally complete.
  \end{proof}
  Since we have already shown how~\autoref{lemma:alg-rigid-graph} and \autoref{lemma:alg-prob-estimate} imply the assertion of the proposition, the proof is complete.
\end{proof}

\section{The proof}
\label{sec:proof}

Recall that our goal is to prove that a.a.s., for every $Q \in \cQ$ with $Q \subseteq G$ and every $r$-cut $\Pi \in \cC_Q$ with $\deficit_{\cC_Q}(\Pi; G) \le d_Q$, the family $\cF_Q[G \cap \ext(\Pi)]$ contains a matching of size exceeding $d_Q$, see~\ref{item:plan-P2} in the proof plan presented in \autoref{section:outline}.
Note that this is precisely the complement of the event $\cB_{Q, \cF_Q}(d_Q,d_Q)$ defined at the beginning of \autoref{sec:algorithm}.
For technical reasons, we will actually bound from above the probability of the event $\cB_{Q, \cF_Q'}(d_Q, d_Q) \supseteq \cB_{Q, \cF_Q}(d_Q, d_Q)$, where $\cF_Q' \subseteq \cF_Q$ is a (carefully chosen) subfamily of $\cF_Q$.
Gearing up towards an application of \autoref{prop:algorithm-result}, we will also choose, for each $Q \in \cQ$, integers $D_Q$ and $m_Q$ and define $\cD_Q$ to be the event $\min\big\{ \binom{n}{2}, |\partial\cF_Q'[G]|\big\} \le D_Qm_Q/p$, so that
\begin{multline*}
  \Pr\big(\cB_{Q, \cF_Q'}(d_Q, d_Q) \text{ for some $Q \in \cQ[G]$}\big)
  \le
  \Pr(\cT^c) + \Pr\big(\cD_Q^c \text{ for some $Q \in \cQ[G]$}\big) \\
  + \sum_{Q \in \cQ} \Pr\big(\cT \cap \cD_Q \cap \cB_{Q, \cF_Q'}(d_Q,d_Q)\big).
\end{multline*}

What we would like to do is take $\cF_Q' \coloneqq \cFQL$ for each $Q \in \cQ_L$ and $\cF_Q' \coloneqq \cFQH$ for all $Q \in \cQ_H$.
In some cases (when $Q \in \cQ_L^2$ and $p \gg p_H$), however, choosing such large families $\cF_Q'$ would make the event $\cD_Q^c$ likely, unless the product $D_Qm_Q$ is large, in which case the upper bound on the probability of $\cT \cap \cD_Q \cap \cB_{Q, \cF_Q'}(d_Q,d_Q)$ provided by \autoref{prop:algorithm-result} (which is increasing in both $D_Q$ and $m_Q$) would not be sufficiently strong for our purposes.
In order to strike a balance between the events $\cD_Q^c$ and $\cT \cap \cD_Q \cap \cB_{Q, \cF_Q'}(d_Q, d_Q)$ in the problematic cases, we will let $\cF_Q'$ be a (pseudo)random subfamily of $\cFQL$ with an appropriate density.

To this end, given a family $\cHh \subseteq \cH$ of copies of $H$ in $K_n$, define, for every $Q \in \cQ_L$,
\[
  \cFQL(\cHh) \coloneqq \{A \setminus Q : A \in \cHQL \cap \cHh\},
\]
cf.\ the definition of $\cFQL$ given at the beginning of \autoref{sec:low-degree-case}.
We will let $\cF_Q'$ be the family $\cFQL(\cHh)$ with $\cHh$ being (a typical sample from the distribution of) the binomial random hypergraph $\cH_q$, for an appropriately chosen $q$.
The reason why we define $\cF_Q'$ via a random subset $\cHh \subseteq \cH$, rather than simply letting $\cF_Q'$ be a random subset of $\cFQL$ or $\cFQH$, is to avoid an unnecessary union bound over all $Q \in \cQ$ while estimating the probability of $\cD_Q$ failing for some $Q \in \cQ[G]$.

The remainder of this section is organised as follows.
First, in \autoref{sec:sparsification}, we construct the family $\cF_Q'$ in the problematic case $Q \in \cQ_L^2$ and $p \gg p_H$.
Second, in \autoref{sec:choosing-parameters}, we define the parameters for the applications of \autoref{prop:algorithm-result} for all $Q \in \cQ$.
Finally, in~\autoref{sec:calculations}, we put everything together and conclude the proof of~\autoref{thm:main}.

\subsection{Sparsification}
\label{sec:sparsification}

In order to obtain an upper bound on the probability of $\cT \cap \cD_Q \cap \cB_{Q, \cF_Q'}(d_Q,d_Q)$ using \autoref{prop:algorithm-result}, we need to establish an upper bound on the probability that $G$ is $\cC_Q$-rigid and $\cF_Q'[\ext(\core_{\cC_Q}(G))]$ contains no large matching (see the definition of $\xi$ in the statement of \autoref{prop:algorithm-result}).
To this end, we will use \autoref{thm:Janson-matchings} in combination with \autoref{lemma:correlation-argument}, which supply an upper bound on this probability (the said $\xi$) that is expressed in terms of the quantities $\Delta_p(\cF_Q')$ and the minimum of $\mu_p(\cF_Q'[\ext(S)])$ over all $r$-tuples $S = (S_1, \dotsc, S_r)$ that are compatible with $Q$ and satisfy $\min_i|S_i| \ge (1-4r\alpha)n/r$.
Since the upper tail of $\Delta_p\big(\cFQL(\cHh)\big)$ is likely too heavy to allow a union bound over all $Q$, instead of choosing a single $\cHh$ for all $Q$, we sample a sequence of independent copies of $\cHh$.
It is straightforward to show that the probability that $\cFQL(\cHh)$ does not have the desired properties for \emph{all} random hypergraphs in the sequence is small enough to warrant a union bound over all $Q \in \cQ$;
this means that we can find at least one suitable $\cHh$ for each $Q \in \cQ$ and define $\cF'_Q \coloneqq \cFQL(\cHh)$.

While defining our families $\cF_Q'$, we will also need to make sure that $\cD_Q$ holds with probability close to one simultaneously for all $Q \in \cQ[G]$.
The following simple observation supplies an upper bound on the size of $\partial\cF'_Q[G]$ in terms of two simple parameters of $Q$ and a~random variable that depends only on the hypergraph $\cHh \subseteq \cH$ that we used to define $\cF'_Q$.
Recall that, for a vertex $v$, we write $\cH_v$ to denote the collection of all copies of $H$ in $K_n$ that contain $v$.

\begin{obs}
  \label{obs:partial-H}
  The following holds for every family $\cHh \subseteq \cH$ and every $G \subseteq K_n$:
  \begin{enumerate}
  \item
    For all $Q \in \cQ_L$, we have
    \[
      |\partial \cFQL(\cHh)[G]| \le e(Q) \cdot \max_{e \in K_n} |\partial\partial_e \cHh[G]|.
    \]
  \item
    For all $Q \in \cQ_H[G]$, we have
    \[
      |\partial \cFQH[G]| \le k(Q) \cdot \max_{v \in V(K_n)} |\partial\cH_v[G]|,
    \]
    where $k(Q)$ is the number of centre vertices of $Q$.    
  \end{enumerate}  
\end{obs}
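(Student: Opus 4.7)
The plan is a straightforward injection argument that directly unwinds the definitions of $\partial\cFQL(\cHh)[G]$ and $\partial\cFQH[G]$. In both cases every $\psi$ in the boundary arises, by definition, from some copy $A$ of $H$ (in $\cHQL \cap \cHh$ or $\cHQH$, respectively) by deleting a few edges. I would canonically assign to each such $\psi$ a ``centre'' in $Q$ (an edge for item~(1), a centre vertex for item~(2)) and verify that $\psi$ then lies in the corresponding $\partial\partial_e\cHh[G]$ or $\partial\cH_v[G]$, after which a trivial union bound over centres yields the claimed inequality. The only subtle point to check is that, within each fibre of the assignment, the map is injective; this is immediate since the original $\psi$ can be recovered from its image by intersecting with the complement of $Q$.

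For item~(1), I would fix $Q \in \cQ_L$ and, for each $\psi \in \partial\cFQL(\cHh)[G]$, pick canonically a realization $\psi = A \setminus \{e,f\}$ with $A \in \cHQL \cap \cHh$, $e$ the (unique) edge of $A \cap Q$ guaranteed by the definition of $\cHQL$, and $f \in A \setminus \{e\}$. Since $A \in \cHh$ contains both $e$ and $f$ and $\psi \subseteq G$, the definition of $\partial\partial_e\cHh[G]$ is satisfied, giving $\psi \in \partial\partial_e\cHh[G]$. Partitioning $\partial\cFQL(\cHh)[G]$ according to the associated $e \in Q$ and summing the trivial bound $|\{\psi : e(\psi) = e\}| \le |\partial\partial_e\cHh[G]|$ over $e \in Q$ yields the claimed estimate.

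For item~(2), I would fix $Q \in \cQ_H$ with $Q \subseteq G$ and, for each $\psi \in \partial\cFQH[G]$, pick canonically a realization $\psi = (A \setminus Q) \setminus \{f\}$ with $A \in \cHQH$ and $f \in A \setminus Q$. By the construction of $\cHQH$, the only vertex of $A$ that lies in $X_Q$ is $v \coloneqq v(A)$, the image of the special vertex $h$; hence $A \cap Q$ consists precisely of the $\deg_H h$ edges $E_h$ joining $v$ to its $A$-neighbours. Consequently $A \setminus \{f\} = \psi \cup E_h \subseteq \psi \cup Q \subseteq G$, so $A \setminus \{f\} \in \partial\cH_v[G]$. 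Partitioning $\partial\cFQH[G]$ according to the associated $v \in X_Q$ and summing yields the bound with $k(Q) = |X_Q|$. Unlike item~(1), this argument genuinely uses the hypothesis $Q \subseteq G$, precisely in the step concluding $A \setminus \{f\} \subseteq G$ from $\psi \subseteq G$; this is why item~(2) is stated only for $Q \in \cQ_H[G]$.
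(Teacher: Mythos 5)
Your proof is correct: the paper states this as an observation without proof, and your argument is exactly the intended one — unwinding the definitions to see that every $\psi\in\partial\cFQL(\cHh)[G]$ lies in $\partial\partial_e\cHh[G]$ for the unique edge $e=A\cap Q\in Q$, and that every $\psi\in\partial\cFQH[G]$ yields $A\setminus\{f\}=\psi\cup E_h(v)\subseteq G$ (using $Q\subseteq G$), with injectivity on each fibre recovered by deleting $Q$. You also correctly identify the one genuinely non-trivial point, namely that item~(2) needs $Q\subseteq G$ whereas item~(1) does not, which matches how the statement is phrased.
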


The following lemma will allow us to define an appropriate family $\cF'_Q$ for all $Q \in \cQ_L^2$.

\begin{lemma}
  \label{lemma:choices-of-cH-low-deg}
  There exist constants $\hat{C}$ and $\Cpar$ that depend only on $H$ such that the following holds for every $q \in [0,1]$ satisfying
  \[
    q \ge \max_{Q \in \cQ_L^2} \Hat{C} \cdot \frac{n^{3-v_H}}{e(Q)}
    \qquad
    \text{and}
    \qquad
    n^{v_H-2} p^{e_H-2} q \ge \log n.
  \]
  There are $\cH_1, \dotsc, \cH_N \subseteq \cH$ with the following properties:
  \begin{enumerate}[label=(\Alph*)]
  \item
    \label{item:global-cH-property-low-deg}
    A.a.s., $\max_{e \in K_n} |\partial\partial_e\cH_i[G_{n,p}]| \le \Cpar n^{v_H-2}p^{e_H-2}q$ for every $i \in \br{N}$.
  \item
    \label{item:Q-good-low-deg}
    For every $Q \in \cQ_L^2$, there exists an $i \in \br{N}$ such that
    \begin{enumerate}[label=(B\arabic*)]
    \item
      \label{item:first-Q-good-low-deg}
      For every $r$-tuple $S = (S_1, \dotsc, S_r)$, with $V(Q) \subseteq S_1$, of disjoint subsets of vertices of size at least $n/(2r)$,
      \[
        \mu_p\big(\cFQL(\cH_i)[\ext(S)]\big) \ge (q/2) \cdot \mu_p(\cFQL[\ext(S)]).
      \]
    \item
      \label{item:second-Q-good-low-deg}
      $\Delta_p\big(\cFQL(\cH_i)\big) \le 2q^2 \cdot \Delta_p(\cFQL)$.
    \end{enumerate}
  \end{enumerate}
\end{lemma}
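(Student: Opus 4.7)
The plan is to sample $\cH_1, \dotsc, \cH_N$ as independent $q$-sparsifications of $\cH$ (each copy of $H$ is kept independently with probability $q$), with $N$ a sufficiently large constant multiple of $n^2$, and then invoke the probabilistic method: I will show that with probability at least $1 - o(1)$ over the sampled family both~\ref{item:global-cH-property-low-deg} and~\ref{item:Q-good-low-deg} hold, from which the existence of such $\cH_i$'s follows.

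For~\ref{item:Q-good-low-deg} I would proceed as follows.  Fix $Q \in \cQ_L^2$ and observe that $|\cFQL(\cH_q)[\ext(S)]| \sim \Bin\big(|\cFQL[\ext(S)]|, q\big)$, so combining~\autoref{lemma:FQL-bounds} with the hypothesis $q \cdot e(Q) \ge \hat{C} n^{3-v_H}$ yields a mean of at least $\Omega(\hat{C} n)$.  Chernoff's inequality then bounds the failure probability of~\ref{item:first-Q-good-low-deg} for a fixed tuple $S$ by $\exp(-\Omega(\hat{C} n))$, and a union bound over the at most $r^n$ compatible tuples succeeds once $\hat{C}$ is sufficiently large in terms of $r$.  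For~\ref{item:second-Q-good-low-deg}, I would compute $\Ex\big[\Delta_p(\cFQL(\cH_q))\big] = q^2 \Delta_p(\cFQL)$ and apply Markov directly.  Together these give probability at least $1/4$ that a single $\cH_q$ satisfies both properties for $Q$; hence the probability that none of the $N$ sparsifications works for some $Q$ is at most $(3/4)^N \cdot |\cQ_L^2| \le (3/4)^N \cdot 2^{\binom{n}{2}}$, which is $o(1)$ for $N$ a sufficiently large multiple of $n^2$.

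For~\ref{item:global-cH-property-low-deg}, my plan is to work in the joint probability space of the $\cH_i$'s and $G_{n,p}$ and condition on the typical event from~\autoref{lemma:partial}~\ref{item:partial-edge}, which bounds $|\partial\partial_e\cH[G_{n,p}]|$ by $O(n^{v_H-2}p^{e_H-2})$ for every edge $e$.  Writing
\[
  Y_e \coloneqq \big|\{(A, e') : A \in \cH_q,\, e, e' \in A,\, e \ne e',\, A \setminus \{e,e'\} \subseteq G_{n,p}\}\big|,
\]
I would exploit the trivial bound $|\partial\partial_e\cH_q[G_{n,p}]| \le Y_e$ and the fact that, conditional on $G_{n,p}$, the variable $Y_e$ is a weighted sum of independent Bernoullis indexed by the copies $A \in \cH$ through $e$, whose conditional expectation is at most $O\big(q \cdot |\partial\partial_e\cH[G_{n,p}]|\big) = O(q n^{v_H-2} p^{e_H-2})$.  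The assumption $q n^{v_H-2} p^{e_H-2} \ge \log n$ makes this expectation logarithmic (or larger), so Chernoff's inequality yields a bound $n^{-c}$ on the failure probability in the joint probability space for any constant $c$, provided $\Cpar$ is chosen large enough.  A union bound over the $O(Nn^2)$ pairs $(i,e)$ followed by Markov's inequality in the $\cH_i$-direction then gives that, with probability $1 - o(1)$ over the sampled $\cH_i$'s, the bound in~\ref{item:global-cH-property-low-deg} holds a.a.s.\ over $G_{n,p}$ simultaneously for every $i \in \br{N}$ and every edge $e$.

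The main obstacle is this second half: $|\partial\partial_e\cH_q[G_{n,p}]|$ is a bivariate random variable with two independent layers of randomness, and the concentration must be strong enough to absorb the union bound over $N \cdot \binom{n}{2}$ pairs $(i,e)$ while remaining compatible with the $\cH_i$'s chosen to witness~\ref{item:Q-good-low-deg}.  Passing to $Y_e$, conditioning on~\autoref{lemma:partial}~\ref{item:partial-edge} to cap the number of copies of $H$ that lean on at least $e_H-2$ edges of $G_{n,p}$, and applying Chernoff to the resulting weighted Bernoulli sum is the technical crux; the lower bound $q n^{v_H-2}p^{e_H-2} \ge \log n$ in the hypothesis is precisely tuned to make this Chernoff step strong enough.
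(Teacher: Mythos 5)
Your proposal is correct and follows essentially the same route as the paper: sample $N = \Theta(n^2)$ independent $q$-sparsifications of $\cH$; prove \ref{item:Q-good-low-deg} via the binomial distribution of $|\cFQL(\cH_q)[\ext(S)]|$, Chernoff with mean $\Omega(\hat{C}n)$, a union bound over the exponentially many tuples $S$ and then over $Q \in \cQ_L^2$ against the constant per-sample success probability; and prove \ref{item:global-cH-property-low-deg} by passing to the pair-count $Y_e$ (the paper's $Z_e^{\cHh}$), conditioning on the typical event of \autoref{lemma:partial}~\ref{item:partial-edge} so that the conditional law is a bounded sum of independent Bernoullis with mean at least $\log n$, and applying Chernoff with $\Cpar$ large. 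The only differences are bookkeeping (the paper fixes $N=n^2$ and organises the two layers of randomness via a reverse-Markov step on $\Pr(\cA^{\cH_q}\mid\cH_q)$ rather than your union-bound-then-Markov, which is equivalent).
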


\begin{proof}[{Proof of~\autoref{lemma:choices-of-cH-low-deg}}]
  Let $G \sim G_{n,p}$.
  Given an edge $e \in K_n$ and a hypergraph $\cHh \subseteq \cH$, denote by $\cA_e^{\cHh}$ the event that
  \[
    |\partial\partial_e\cHh[G]| > \Cpar n^{v_H - 2} p^{e_H - 2} q
  \]
  and let $\cA^{\cHh} \coloneqq \bigcup_{e \in K_n} \cA_e^{\cHh}$.
  Further, for each $Q \in \cQ_L^2$, let $\cB_{Q,1}^{\cHh}$ denote the event that
  \[
    \mu_p\big(\cFQL(\cHh)[\ext(S)]\big) < (q/2) \cdot \mu_p(\cFQL[\ext(S)]).
  \]
  for some $r$-tuple $S = (S_1, \dotsc, S_r)$, with $V(Q) \subseteq S_1$, of disjoint subsets of vertices of size at least $n/(2r)$ and let $\cB_{Q,2}^{\cHh}$ be the event that $\Delta_p\big(\cFQL(\cHh)\big) > 2q^2 \cdot \Delta_p(\cFQL)$.
  The following two claims easily imply the assertion of the lemma.

  \begin{claim}
    \label{claim:global-cH-property-low-deg}
    $\Pr\big(\cA^{\cH_q} \mid \cH_q\big) < n^{-1}$ with probability at least $1 -n^{-3}$.
  \end{claim}
  \begin{claim}
    \label{claim:Q-property-low-deg}
    $\Pr(\cB_{Q,1}^{\cH_q} \cup \cB_{Q,2}^{\cH_q}) \le 2/3$ for every $Q \in \cQ_L^2$.
  \end{claim}

  Indeed, let $N = n^2$ and let $\cH_1, \dotsc, \cH_N$ be independent samples from the distribution of $\cH_q$.
  Note first that \autoref{claim:global-cH-property-low-deg} and a simple union bound imply that, with probability at least $1-n^{-1}$ (in the choices of $\cH_1, \dotsc, \cH_N$), we have $\Pr(\cA^{\cH_i}) < n^{-1}$ for all $i \in \br{N}$, that is, assertion~\ref{item:global-cH-property-low-deg} holds.
  Further, \autoref{claim:Q-property-low-deg} asserts that, for every fixed pair of $Q \in \cQ$ and $i \in \br{N}$, the probability that either \ref{item:first-Q-good-low-deg} or~\ref{item:second-Q-good-low-deg} fails for this pair is at most $2/3$.
  Since, for a given $Q$, these events are independent for all $i \in \br{N}$, the probability that either~\ref{item:first-Q-good-low-deg} or~\ref{item:second-Q-good-low-deg} fails simultaneously for all $i \in \br{N}$ is at most $(2/3)^N$.
  However, there are no more than $2^{\binom{n}{2}}$ graphs in $\cQ_L^2$ and thus the probability that~\ref{item:Q-good-low-deg} fails is not more than $2^{\binom{n}{2}} \cdot (2/3)^{n^2} \le (8/9)^{n^2/2}$.

  \begin{proof}[{Proof of \autoref{claim:global-cH-property-low-deg}}]
    Denote by $\cA$ the event that $\Pr\big(\cA^{\cH_q} \mid \cH_q\big) \ge n^{-1}$ and observe that
    $\Pr(\cA^{\cH_q}) \ge \Pr(\cA) \cdot n^{-1}$.
    On the other hand, we may also bound the probability of $\cA^{\cH_q}$ from above by first conditioning on $G$.
    To this end, let $\cA_e$ be the event that
    \[
      |\partial\partial_e\cH[G]| > \Cpar n^{v_H - 2} p^{e_H - 2} / (2v_H^4)
    \]
    and note that $\Pr(\cA_e) \le n^{-6}$ when $\Cpar$ is sufficiently large, by \autoref{lemma:partial}~\ref{item:partial-edge}.
    Consequently,
    \[
      \Pr(\cA^{\cH_q}) \le \sum_{e \in K_n} \Pr(\cA_e^{\cH^q})
      \le \sum_{e \in K_n} \left(\Pr(\cA_e) + \Pr\big(\cA_e^{\cH_q} \mid \cA_e^c\big)\right) \le \frac{1}{2n^4} + \sum_{e \in K_n} \Pr\big(\cA_e^{\cH_q} \mid \cA_e^c\big).
    \]
    Gearing up to bound $\Pr\big(\cA_e^{\cH_q} \mid \cA_e^c\big)$, define, for all $e \in K_n$ and $\cHh \subseteq \cH$,
    \[
      Z_e^{\cHh} \coloneqq \big|\big\{A \in \cHh: e \in A \text{ and } A \setminus(e \cup f) \subseteq G \text{ for some } f \in A\big\}\big|
    \]
    and observe that, for all $e \in K_n$,
    \[
      v_H^{-2} \cdot |\partial\partial_e \cHh[G]| \le Z_e^{\cHh} \le v_H^2 \cdot |\partial\partial_e \cHh[G]|.
    \]
    The advantage of working with $Z_e^{\cHh}$ is that, conditioned on $G$, the variable $Z_e^{\cH_q}$ is distributed like $\Bin(Z_e^{\cH}, q)$.
    Since $Z_e^{\cH} \le \Cpar n^{v_H-2}p^{e_H-2}/(2v_H^2)$ on $\cA_e^c$ whereas $Z_e^{\cH_q} > \Cpar n^{v_H-2}p^{e_H-2}q/v_H^2$ on $\cA_e^{\cH^q}$, it follows that (letting $m \coloneqq \Cpar n^{v_H-2}p^{e_H-2}/(2v_H^2) \ge \Cpar/(2v_H^2) \cdot q^{-1}\log n $)
    \[
      \Pr\big(\cA_e^{\cH_q} \mid \cA_e^c\big) \le \Pr\big(\Bin(m,q) > 2mq\big) \le \exp(-mq/8) \le n^{-6},
    \]
    where the last inequality holds provided that $\Cpar$ is sufficiently large.
    We conclude that
    \[
      \Pr(\cA) \le n \cdot \Pr(\cA^{\cH_q}) \le n \cdot \left(\frac{1}{2n^4} + \binom{n}{2} \cdot \frac{1}{n^6}\right) \le \frac{1}{n^3},
    \]
    as claimed.
  \end{proof}

  \begin{proof}[{Proof of \autoref{claim:Q-property-low-deg}}]  
    Fix an arbitrary $Q \in \cQ_L^2$.
    By construction, for each $\omega \in \cFQL$, there is a unique $A \in \cH$ such that $\omega = A \setminus Q$.
    In particular, we have
    \[
      \Ex\big[\Delta_p\big(\cFQL(\cH_q)\big)\big] = q^2 \cdot \Delta_p\big(\cFQL\big)
    \]
    and thus $\Pr(\cB_{Q,2}^{\cH_q}) \le 1/2$ by Markov's inequality.
    It is therefore enough to show that $\Pr(\cB_{Q,1}^{\cH_q}) \le 1/6$.
    To this end, note first that, since $\cFQL$ is a uniform hypergraph, $\cB_{Q,1}^{\cH_q}$ is the event that $|\cFQL(\cHh)[\ext(S)]| < (q/2) \cdot |\cFQL[\ext(S)]|$ for some $r$-tuple $S = (S_1, \dotsc, S_r)$, with $V(Q) \subseteq S_1$, of disjoint subsets of vertices of size at least $n/(2r)$.
    Further, observe that, for every $r$-tuple $S$ of interest,
    \[
      |\cFQL(\cH_q)[\ext(S)]| \sim \Bin\big(|\cFQL[\ext(S)]|, q\big)
    \]
    and $|\cFQL[\ext(S)]| \ge (\pi_H/2) \cdot e(Q) \cdot (n/(2r))^{v_H-2} \eqqcolon m$, by \autoref{lemma:FQL-bounds}.
    Since there are at most $(r+1)^n$ choices for the $r$-tuple $S$,
    \[
      \begin{split}
        \Pr(\cB_{Q,1}^{\cH_q}) & \le (r+1)^n \cdot \Pr\big(\Bin(m,q) < mq/2\big)
        \le (r+1)^n \cdot \exp(-mq/8) \\
        & \le (r+1)^n \cdot \exp(-\hat{C}\pi_H/(2r)^{v_H} \cdot n) \ll 1,
      \end{split}
    \]
    provided that $\hat{C}$ is sufficiently large.
  \end{proof}
  The proof of the lemma is now complete.
\end{proof}

\subsection{Choosing the parameters}
\label{sec:choosing-parameters}

We are finally ready to choose the parameters for our application of \autoref{prop:algorithm-result}.
We first apply \autoref{lemma:choices-of-cH-low-deg} with $q \coloneqq \hat{C}\log n / (\kappa n^{v_H-2}p^{e_H-1})$ to obtain hypergraphs $\cH_1, \dotsc, \cH_N \subseteq \cH$ as in the assertion of the lemma.
Note that we need to first verify that $n^{v_H-2}p^{e_H-2}q = \hat{C}/\kappa \cdot p^{-1} \log n \ge \log n$ and that, for every $Q \in \cQ_L^2$,
\[
  \frac{\hat{C}\log n}{\kappa n^{v_H-2} p^{e_H-1}} \ge \frac{\hat{C}n^{3-v_H}}{e(Q)},
\]
which holds since $e(Q) \ge \kappa np/\log n$ and $e_H \ge 2$.

We now choose the families $\cF_Q'$.
If either $Q \in \cQ_L$ and $p \le \Cth p_H$ or $Q \in \cQ_L^1$ and $p > \Cth p_H$, then we let $\cF_Q' \coloneqq \cFQL$.
If $Q \in \cQ_L^2$ and $p > \Cth p_H$, then we let $\cF_Q' \coloneqq \cFQL(\cH_{i(Q)})$, where $i(Q) \in \br{N}$ is the index from \autoref{lemma:choices-of-cH-low-deg}~\ref{item:Q-good-low-deg}.
Finally, if $Q \in \cQ_H$, then we let $\cF_Q'$ be the hypergraph $\cF \subseteq \cFQH$ from the assertion of \autoref{lemma:high-degree-bounds}.

The choices of the parameters $m_Q$ and $D_Q$ are described in \autoref{table:asd}, where we also recall the definition of $d_Q$ originally given in \autoref{dfn:choice-of-d_Q}.

\begin{table}
  \centering
  \begin{tabular}{ |c|c|c|c|c| }
    \hline
    & $q$ & $d_Q$ & $m_Q$ & $D_Q$ \\
    \hline
    \makecell{$p \le \Cth p_H$ \\ $Q \in \cQ_L$} &  & $\sqrt{\eta}e(Q)\log n \wedge \beta n^2p$ & $\kappa n^{v_H-2}p^{e_H-1} \cdot e(Q)$ & $\frac{4e_H^2}{\kappa}$ \\
    \hline
    \makecell{$p > \Cth p_H$ \\ $Q \in \cQ_L^1$} &  & $8re(Q)$ & $8re(Q)$ & $\frac{n^2 p}{d_Q}$ \\
    \hline
    \makecell{$p > \Cth p_H$ \\ $Q \in \cQ_L^2$} & $\frac{\hat{C}\log n}{\kappa n^{v_H-2}p^{e_H-1}}$ & $\sqrt{\eta} e(Q)\log n \wedge \beta n^2p$ & $\kappa n^{v_H-2} p^{e_H-1} q \cdot e(Q)$ & $\frac{\Cpar}{\kappa}$ \\
    \hline
    \makecell{$Q \in \cQ_H$} & & $32k(Q)np$ & $32k(Q)np$ & $\frac{2v_H \cdot M(k(Q))}{m_Q}$ \\
    \hline
  \end{tabular}
  \caption{
    \label{table:asd}
    The choices of the parameters. (For brevity, $a \wedge b$ denotes $\min\{a, b\}$ and $M(k) \coloneqq kn^{v_H-1}p^{e_H} \wedge n^2p$.)
  }
\end{table}

\subsection{Calculations}
\label{sec:calculations}

We first estimate the probability that $\cD_Q$ fails for some $Q \in \cQ[G]$.
(Recall that $\cD_Q$ is the event that $\min\big\{ \binom{n}{2}, |\partial\cF_Q'[G]|\big\} \le D_Qm_Q/p$.)

\begin{lemma}
  \label{lemma:cDQ}
  A.a.s., $\cD_Q$ holds for all $Q \in \cQ$ with $Q \subseteq G$.
\end{lemma}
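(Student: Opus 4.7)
The plan is to verify $\cD_Q$ case by case according to the four rows of \autoref{table:asd}, and in each case to deduce the required bound on $\min\{\binom{n}{2}, |\partial\cF_Q'[G]|\}$ from \autoref{obs:partial-H} together with either the typicality assumptions built into $\cT$ or the global property~\ref{item:global-cH-property-low-deg} supplied by \autoref{lemma:choices-of-cH-low-deg}. Since both $G \in \cT$ and the event in~\ref{item:global-cH-property-low-deg} hold a.a.s., a single union bound then yields $\cD_Q$ simultaneously for all $Q \in \cQ$ with $Q \subseteq G$.

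For the three low-degree rows, the first step is to apply part~(1) of \autoref{obs:partial-H}, which gives $|\partial\cF_Q'[G]| \le e(Q) \cdot \max_{e \in K_n} |\partial\partial_e\cHh[G]|$, with $\cHh = \cH$ in the first two rows and $\cHh = \cH_{i(Q)}$ in the third. In the row with $p \le \Cth p_H$, the bound $|\partial\partial_e\cH[G]| \le 4e_H^2 n^{v_H-2}p^{e_H-2}$ from $\cT$ yields exactly $|\partial\cF_Q'[G]| \le D_Q m_Q/p$ for the $D_Q$ selected in the table. In the row with $p > \Cth p_H$ and $Q \in \cQ_L^2$, I instead invoke the sparsified bound $|\partial\partial_e\cH_{i(Q)}[G]| \le \Cpar n^{v_H-2}p^{e_H-2}q$ from \autoref{lemma:choices-of-cH-low-deg}\ref{item:global-cH-property-low-deg}, which combined with the choice $q = \hat{C}\log n/(\kappa n^{v_H-2}p^{e_H-1})$ again matches $D_Q m_Q/p$. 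The remaining row, $p > \Cth p_H$ with $Q \in \cQ_L^1$, is immediate since there $D_Q m_Q/p = n^2 \ge \binom{n}{2}$.

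For the high-degree row $Q \in \cQ_H$, I would apply part~(2) of \autoref{obs:partial-H}, namely $|\partial\cF_Q'[G]| \le k(Q) \cdot \max_v |\partial\cH_v[G]|$, together with the typicality bound $|\partial\cH_v[G]| \le 2v_He_H n^{v_H-1}p^{e_H-1}$ from $\cT$. The target value $D_Q m_Q/p = 2v_H M(k(Q))/p = 2v_H \min\{k(Q) n^{v_H-1}p^{e_H-1},\, n^2\}$ is then compared with this upper bound by splitting on which term achieves the minimum defining $M(k(Q))$: when $k(Q) n^{v_H-1}p^{e_H-1} \ge n^2$, the right-hand side dominates $\binom{n}{2}$ and $\cD_Q$ is automatic, while in the opposite regime the direct inequality supplies what is needed.

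The main obstacle, modest as it is, is the high-degree case, where one has to track the two regimes of $M(k(Q))$ and confirm that the constants $v_H$ and $e_H$ arising from the typicality bound on $|\partial\cH_v[G]|$ are absorbed correctly by the choice of $D_Q$; the rest is routine bookkeeping, with no need for any additional probabilistic argument beyond $G \in \cT$ and the a.a.s.\ conclusion of \autoref{lemma:choices-of-cH-low-deg}.
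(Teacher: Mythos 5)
Your proposal is correct and follows essentially the same route as the paper: \autoref{obs:partial-H} combined with the relevant $\cT$-property (or with \autoref{lemma:choices-of-cH-low-deg}~\ref{item:global-cH-property-low-deg} in the sparsified row), plus the observation that the $\cQ_L^1$ row is deterministic since $D_Qm_Q/p = n^2$. The case split on which term attains $M(k(Q))$ in the high-degree row is exactly how the paper closes that case as well.
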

\begin{proof}
  We prove this lemma separately for each of the four rows in~\autoref{table:asd}.
  First, if $p \le \Cth p_H$ and $Q \in \cQ_L$, then $\cF_Q' = \cFQL$ and thus on the event~$\cT$, by \autoref{obs:partial-H},
  \[
    |\partial\cF_Q'[G]| \le e(Q) \cdot \max_{e \in K_n} |\partial\partial_e\cH[G]| \le 4e_H^2n^{v_H-2}p^{e_H-2}\cdot e(Q) = D_Qm_Q/p.
  \]
  Second, if $p > \Cth p_H$ and $Q \in \cQ_L^1$, then $\binom{n}{2} < n^2 = D_Qm_Q/p$ and thus $\cD_Q$ holds almost surely.
  Third, if $p > \Cth p_H$ and $Q \in \cQ_L^2$, then $\cF_Q' = \cFQL(\cH_{i(Q)})$ and thus, by \autoref{lemma:choices-of-cH-low-deg}~\ref{item:global-cH-property-low-deg} and \autoref{obs:partial-H},
  \[
    |\partial\cF_Q'[G]| \le e(Q) \cdot \max_{e \in K_n} |\partial\partial_e \cH_{i(Q)}[G]| \le \Cpar n^{v_H-2}p^{e_H-2}q \cdot e(Q) = D_Qm_Q/p.
  \]
  Fourth, if $Q \in \cQ_H$, then $\cF_Q' \subseteq \cFQH$ and thus on the event~$\cT$, by~\autoref{obs:partial-H},
  \[
    |\partial\cF_Q'[G]| \le k(Q) \cdot \max_{v \in V(K_n)} |\partial\cH_v[G]| \le 2v_H \cdot k(Q) n^{v_H-1}p^{e_H-1};
  \]
  moreover, $\binom{n}{2} < n^2$.  Hence, $\min\big\{ \binom{n}{2}, |\partial\cF_Q'[G]|\big\} \le D_Qm_Q/p$.
\end{proof}

Moreover, denote by $\cG_Q$ the event $\cT \cap \cD_Q \cap \cB_{Q, \cF_Q'}(d_Q,d_Q) \cap \{Q \subseteq G\}$.

\begin{lemma}
  \label{lemma:sufficient-bound-cGQ}
  Suppose that there exists a positive constant $c$ such that
  \[
    \Pr(G_{n,p} \in \cG_Q) \le p^{e(Q)} e^{-(1+c) e(Q) \log (n^2p/e(Q))}
  \]
  for every $Q \in \cQ_L$ and that
  \[
    \Pr(G_{n,p} \in \cG_Q) \le p^{e(Q)} e^{-2k(Q)np}
  \]
  for all $Q \in \cQ_H$. Then
  \[
    \sum_{Q \in \cQ} \Pr\left(G_{n,p} \in \cG_Q\right) = o(1).
  \]
\end{lemma}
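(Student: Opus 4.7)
The plan is to handle the sums over $\cQ_L$ and $\cQ_H$ separately, in each case grouping coloured graphs by the obvious size parameter---the edge count $t \coloneqq e(Q)$ for $\cQ_L$ and the centre count $k \coloneqq k(Q)$ for $\cQ_H$---and showing that the resulting sum is a geometric-type series with total $o(1)$.

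\emph{The $\cQ_L$ sum.} Since every $Q \in \cQ_L$ has all of its vertices coloured $1$, the coloured graph is determined by its edge set; hence the number of $Q \in \cQ_L$ with $e(Q) = t$ is at most $\binom{\binom{n}{2}}{t} \le (\mathrm{e}\, n^2/(2t))^t$. Combining this with the hypothesis gives
\[
  \sum_{Q \in \cQ_L,\, e(Q) = t} p^{e(Q)} \exp\bigl(-(1+c)\, t \log(n^2p/t)\bigr) \;\le\; \bigl(\mathrm{e}/2\bigr)^t \bigl(t/(n^2p)\bigr)^{ct}.
\]
The constraint $\Delta(Q) \le \kappa np/\log n$ forces $t \le \kappa n^2p/(2\log n)$, so $t/(n^2p) \le \kappa/(2\log n) = o(1)$, and each term is at most $[(\mathrm{e}/2)(\kappa/(2\log n))^c]^t$; for large $n$ this is a geometric series with base~$o(1)$, dominated by the $t=1$ contribution $(\mathrm{e}/2)(n^2p)^{-c} = o(1)$.

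\emph{The $\cQ_H$ sum.} A coloured $Q \in \cQ_H$ with $k(Q) = k$ is specified, with overcounting, by the choice of $X_Q$ of size $k$ together with, for each $v \in X_Q$, a choice of $\eta np$ neighbours in each of the $r$ colour classes; crucially, the colours of vertices outside $V(Q) = X_Q \cup N_Q(X_Q)$ do not belong to any $V^i(Q)$ and hence play no role in $\cF_Q'$ or $\cG_Q$. This is the key observation that avoids an otherwise unrecoverable $r^n$ factor coming from colouring all of $V(K_n)$, and shows that the number of coloured $Q \in \cQ_H$ with $k(Q) = k$ is at most $\binom{n}{k}\binom{n}{\eta np}^{rk}$. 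Using $e(Q) = r\eta np \cdot k$, the sum collapses to
\[
  \sum_{Q \in \cQ_H,\, k(Q) = k} p^{e(Q)} e^{-2knp} \;\le\; \Bigl[(\mathrm{e}n/k)(\mathrm{e}/\eta)^{r\eta np}\, e^{-2np}\Bigr]^k \;=\; \Bigl[(\mathrm{e}n/k) \exp\bigl((r\eta(1-\log\eta) - 2)np\bigr)\Bigr]^k.
\]
Taking $\eta$ small enough that $r\eta(1-\log\eta) \le 1$ (compatible with the dependence $\eta = \eta(H, \dots)$ listed in \autoref{sec:constants}) makes the exponent at most $-np$; since $np \ge \theta_H n^{1 - 1/m_2(H)}(\log n)^{1/(e_H - 1)} \gg \log n$, the bracket is at most $e^{-np/2}$ uniformly in $k \ge 1$, so the sum over $k \ge 1$ is at most $2e^{-np/2} = o(1)$.

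\emph{Main obstacle.} The entire argument reduces to elementary counting; the only non-routine step is the avoidance of the stray $r^n$ factor in the $\cQ_H$ count, achieved by the observation that the combinatorial data relevant to $\cG_Q$ only involves the colouring of $V(Q)$. Once this is done, both sums have a comfortable exponential margin, owing to the choice of small $\eta$ and the fact that $np \gg \log n$ under the assumption $p \ge (1+\eps) p_H$.
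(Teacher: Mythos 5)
Your proof is correct and follows the same overall strategy as the paper's --- group the graphs in $\cQ_L$ by $e(Q)$ and those in $\cQ_H$ by $k(Q)$, and show that each resulting series is dominated by its leading term --- but the execution differs in both halves. For the $\cQ_L$ sum, the paper restricts to $e(Q) \le \beta \binom{n}{2} p$ and analyses the function $x \mapsto x - cx\log\big(\binom{n}{2}p/x\big)$, which requires $\beta$ to be small as a function of $c$; you instead use the a priori bound $e(Q) \le \kappa n^2 p/(2\log n)$ coming from the maximum-degree condition in the definition of $\cQ_L$, which makes the ratio $e(Q)/(n^2p) = o(1)$ automatic and renders the series argument immediate (your phrase ``dominated by the $t=1$ contribution'' is slightly loose --- the correct statement is that consecutive terms decay by a uniform factor $o(1)$ --- but the conclusion $o(1)$ stands). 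For the $\cQ_H$ sum, the paper sums the weight $p^{e(Q)}$ over \emph{all} graphs dominated by a $k$-set via the binomial theorem, obtaining $(1+p)^{nk} \le e^{npk}$ per choice of centres with no condition on $\eta$; you count only the graphs with the exact $\cQ_H$ structure, paying $(e/\eta)^{r\eta np}$ per centre and hence needing $r\eta(1-\log\eta) \le 1$, i.e.\ $\eta$ small in terms of $r$ alone, which is compatible with the constant hierarchy of \autoref{sec:constants} since $\eta$ is already required to be small elsewhere. Your count has the additional merit of explicitly encoding the assignment of the $r\eta np$ neighbours of each centre to the $r$ colour classes --- the union bound genuinely runs over coloured graphs, as $\cC_Q$ and $\cF_Q'$ depend on the colouring --- a factor that the paper's $(1+p)^{nk}$ does not visibly account for and whose inclusion would in any case also force $\eta$ to be small; so your extra hypothesis on $\eta$ is not a real loss, and your treatment of the colouring is if anything the more careful one.
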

\begin{proof}
  Denote $N \coloneqq \binom{n}{2}$.  Since $e(Q) \le \beta N p$ for all $Q \in \cQ_L$ with $Q \subseteq G_{n,p}$ when $G_{n,p} \in \cG_Q \subseteq \cT$, we have
  \[
    \begin{split}
      \sum_{Q \in \cQ_L} \Pr(G_{n,p} \in \cG_Q)
      & \le \sum_{m=1}^{\beta Np} \binom{N}{m} p^{m} e^{-(1+c) m \log (Np/m)} \\
      & \le \sum_{m=1}^{\beta Np} \left(\frac{eNp}{m}\right)^m e^{-(1+c) m \log(Np/m)}
      = \sum_{m=1}^{\beta Np} e^{m-cm \log (Np/m)}.
    \end{split}
  \]
  Let $f(x) \coloneqq x - cx \log(Np/x)$ and note that $f'(x) = 1 + c - c\log(Np/x)$.
  In particular, if $x \in (0, \beta Np)$, then $f'(x) \le 1 + c - c\log(1/\beta) \le -1$, provided that $\beta$ is sufficiently small as a function of $c$ only.
  This implies that
  \[
    \sum_{Q \in \cQ_L} \Pr(G_{n,p} \in \cG_Q) \le \sum_{m =1}^{\beta Np} e^{f(m)} \le \sum_{m=1}^{\beta Np} e^{f(1)+1-m} \le \frac{e^{f(1)}}{1-1/e} \le O\big((Np)^{-c}\big).
  \]

  Since each $Q \in \cQ_H$ has a set of centre vertices that dominate all its edges,
  \[
    \begin{split}
      \sum_{Q \in \cQ_H} \Pr(G_{n,p} \in \cG_Q)
      & \le \sum_{k=1}^{n} \binom{n}{k} \left(\sum_{m \ge 0} \binom{n}{m} p^m \right)^k \cdot e^{-2knp} \\
      & = \sum_{k=1}^{n} \binom{n}{k} (1+p)^{nk} \cdot e^{-2knp} \le \sum_{k=1}^n \binom{n}{k} e^{-npk} \\
      & = (1+e^{-np})^n - 1 \le \exp(ne^{-np}) - 1 \le 2ne^{-np},
    \end{split}
  \]
  since $ne^{-np} \le 1$.
\end{proof}

Fix an arbitrary $Q \in \cQ$.  In order to bound $\Pr(G_{n,p} \in \cG_Q)$, we will apply \autoref{prop:algorithm-result} with $d = x = d_Q$, $m = m_Q$, $D = D_Q$, and $\cF = \cF_Q$.  To do this, however, we will first have to check that the various assumptions of the proposition are satisfied.  Since $e(Q) \le \beta n^2 p \ll n^2p$ and $\cF_Q$ is a family of subgraphs of $K_n \setminus Q$, it will suffice to prove the following lemma.

\begin{lemma}
  For every $Q \in \cQ$ satisfying $Q \subseteq G \in \cT$,
  \[
    d_Q \ll n^2p
    \qquad
    \text{and}
    \qquad
    d_Q \le \vmin(Q) \cdot \kappa np.
  \]
\end{lemma}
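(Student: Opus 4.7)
The plan is to verify each inequality by a direct case analysis matching the three branches in \autoref{dfn:choice-of-d_Q}, using the defining degree/size bounds on graphs in $\cQ_L$ and $\cQ_H$ together with the pseudorandomness property (T5) of $\cT$ (equivalently the conclusion of \autoref{cor:Q-vmin}). Throughout, I would tacitly use that $np \to \infty$ (since $p \gg n^{-1/m_2(H)}$) and that $\beta = o(1)$.

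For $d_Q \ll n^2 p$ the three cases are each a one-liner. If $Q \in \cQ_L^1$ with $p > \Cth p_H$, then $d_Q = 8r e(Q)$ and the definition of $\cQ_L^1$ gives $e(Q) < \kappa np/\log n$, so $d_Q = O(np/\log n) = o(n^2 p)$. In the second branch, $d_Q \le \beta n^2 p \ll n^2 p$ by the choice of $\beta$. In the high-degree branch, $d_Q = 32 k(Q) np$ with $k(Q) = |X_Q| = o(n)$ built into the definition of $\cQ_H$.

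For $d_Q \le \vmin(Q) \cdot \kappa np$ I would split on $\cQ_L$ versus $\cQ_H$. If $Q \in \cQ_L$ then $\vmin(Q) = |V(Q)|$ since all vertices are coloured $1$, and the handshake identity combined with $\Delta(Q) \le \kappa np/\log n$ gives $e(Q) \le \vmin(Q) \cdot \kappa np / (2 \log n)$. Plugging this into each admissible value of $d_Q$ (either $8r e(Q)$ or $\sqrt{\eta} e(Q) \log n$) yields the desired inequality for $n$ large, with a factor of $1/\log n$ to spare in the first case and a factor of $\sqrt{\eta}/2$ to spare in the second. For $Q \in \cQ_H$, I invoke \autoref{cor:Q-vmin} to get $\vmin(Q) \ge c \min\{n, k(Q) np\}$ for an absolute constant $c > 0$; splitting on which term achieves the minimum, the inequality $32 k(Q) np \le \vmin(Q) \cdot \kappa np$ reduces either to $np \ge 32/(c\kappa)$ (automatic) or to $k(Q) \le c\kappa n/32$ (automatic since $k(Q) = o(n)$).

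No step is really an obstacle; the routine is just to chase down which clause of the definition of $d_Q$ applies and match it against the right feature of $\cQ_L$ or $\cQ_H$. The only subtlety worth flagging is that for $\cQ_H$ we genuinely need the conclusion of \autoref{cor:Q-vmin}, which is why (T5) was built into the definition of $\cT$.
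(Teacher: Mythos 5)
Your proof is correct and takes essentially the same approach as the paper: in both, the low-degree case reduces to the handshake bound $\vmin(Q) \ge e(Q)/\Delta(Q)$ combined with $\Delta(Q) \le \kappa np/\log n$, and the high-degree case invokes \autoref{cor:Q-vmin} (property (T5)) to get $\vmin(Q) \ge c\min\{n, k(Q)np\}$ and then exploits $k(Q) = o(n)$. The only cosmetic difference is that the paper bounds the ratio $d_Q/\vmin(Q)$ in each of the two cases $\cQ_L$, $\cQ_H$ in one line, whereas you split $\cQ_L$ into its two subcases and also split on which term achieves the minimum in (T5); the content is identical.
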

\begin{proof}
  The asymptotic inequality $d_Q \ll n^2p$ is straightforward to verify.  Indeed, recall that $e(Q) \ll n^2p$ for all $Q \in \cQ$ and $k(Q) \ll n$ for all $Q \in \cQ_H$.
  To see that the second inequality holds as well, consider first the case $Q \in \cQ_L$.
  Here, we have $\vmin(Q) = |V^1(Q)| \ge e(Q)/\Delta(Q) \ge e(Q)\log n / (\kappa np)$ and thus
  \[
    \frac{d_Q}{\vmin(Q)} \le \kappa np \cdot \max\left\{\sqrt{\eta}, \frac{8r}{\log n}\right\} \le \kappa np.
  \]
  Assume now that $Q \in \cQ_H$.  Our assumption that $Q \subseteq G \in \cT$ implies that $\vmin(Q) \ge c \cdot \min\{n, k(Q) np\}$ for some positive constant $c$ that depends only on $\eta$ and thus
  \[
    \frac{d_Q}{\vmin(Q)} \le c^{-1} \cdot \max\{32, 32k(Q)p\} \ll np,
  \]
  since $k(Q) \ll n$.
\end{proof}

Define
\[
  \nu_Q \coloneqq m_Q + (r^2+1)(d_Q+1).
\]
\autoref{prop:algorithm-result} with $K_Q = 3$ implies that, for some constant $Z = Z(\alpha, p_0, r)$,
\begin{equation}
  \label{eq:cGQ-bound}
  \Pr(G_{n,p} \in \cG_Q) \le p^{e(Q)} \cdot D_Q^{d_Q} \cdot \left(Z^{d_Q} \cdot \xi_Q + e^{-3d_Q\log(n^2p/d_Q)}\right),
\end{equation}
where
\[
  \xi_Q \coloneqq \Pr\big(G_{n,p} \text{ is $\cC_Q$-rigid and } \nu(\cF_Q[G_{n,p} \cap \ext(\core_{\cC_Q}(G_{n,p}))]) \le \nu_Q \mid Q \subseteq G_{n,p}\big).
\]
Gearing up towards bounding $\xi_Q$ from above, let $\cS_Q$ denote the collection of all tuples $S = (S_1, \dotsc, S_r)$ of pairwise disjoint sets of vertices that are compatible with $Q$ and satisfy $\min_i |S_i| \ge (1-4r\alpha)\cdot n/r$.
Since, for every $S \in \cS_Q$, the event $\nu(\cF_Q[G_{n,p} \cap \ext(S)]) \le \nu_Q$ is decreasing, determined by $\ext(S)$, and independent of the event $Q \subseteq G_{n,p}$, we may use \autoref{lemma:correlation-argument} to conclude that
\[
  \xi_Q \le r! \cdot \max_{S \in \cS_Q} \Pr\left(\nu\big(\cF_Q[G_{n,p} \cap \ext(S)]\big) \le \nu_Q\right).
\]
Since we will bound $\xi_Q$ from above with the use \autoref{cor:Janson-matchings} and \autoref{cor:extended-Janson-matchings}, it will be convenient to define, for each $S \in \cS_Q$,
\[
  \mu_Q \coloneqq \min_{S \in \cS_Q} \mu_p\big(\cF_Q[\ext(S)]\big),
  \quad
  \Delta_Q \coloneqq \Delta_p(\cF_Q),
  \quad
  \text{and}
  \quad
  \Lambda_Q \coloneqq \min\big\{\mu_Q, \mu_Q^2/\Delta_Q\big\}.
\]
The following two lemmas are the heart of this section.

\begin{lemma}
  \label{lemma:mu-Delta-nu-low-degree}
  The following holds for every $Q \in \cQ_L$:
  \begin{enumerate}[label=(\roman*)]
  \item
    \label{item:mu-Delta-nu-low-degree-sparse}
    If $p \le \Cth p_H$, then $\mu_Q \ge (1+\eps) e(Q) \log(n^2p)$ and $\Delta_Q, \nu_Q \le \eps^2/100 \cdot \mu_Q$.
  \item
    \label{item:mu-Delta-nu-low-degree-dense}
    If $p > \Cth p_H$, then $\Lambda_Q \ge 100r \cdot e(Q)\log (n^2p)$ and $\nu_Q \le \Lambda_Q / 1000$.
  \end{enumerate}
\end{lemma}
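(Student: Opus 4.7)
The plan is to unpack the estimates of \autoref{lemma:FQL-bounds} using strict $2$-balancedness of $H$ and the definition of $\theta_H$, and to match them against the parameter choices from \autoref{table:asd}. Strict $2$-balancedness gives $m_2(H)(v_H-2) = e_H - 1$, whence $n^{v_H-2} p_H^{e_H-1} = \theta_H^{e_H-1} \log n$; combined with~\eqref{eq:theta_H} (which reads $r^{2-v_H}\pi_H \theta_H^{e_H-1} = 2 - 1/m_2(H)$, since $r = \chi(H) - 1$), this lets us rewrite $\pi_H r^{2-v_H} n^{v_H-2}p^{e_H-1}$ as $(2-1/m_2(H)) \cdot (p/p_H)^{e_H-1} \log n$. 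Under the standing assumption $(1+\eps) p_H \le p \le p_0 < 1$, one has $\log(n^2 p) = (2 - 1/m_2(H) + o(1)) \log n$ when $p = \Theta(p_H)$ and, more crudely, $\log(n^2 p) \le (2 + o(1)) \log n$ always. For every $S \in \cS_Q$ we have $\min_i |S_i| \ge (1 - 4r\alpha) n/r$, so the first estimate of \autoref{lemma:FQL-bounds} gives
\[
  \mu_p\bigl(\cFQL[\ext(S)]\bigr) \ge (1-o(1))(1-4r\alpha)^{v_H-2} \pi_H r^{2-v_H} \cdot e(Q) n^{v_H-2} p^{e_H-1}.
\]

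For part~\ref{item:mu-Delta-nu-low-degree-sparse} we have $\cF_Q' = \cFQL$ and $(1+\eps)p_H \le p \le \Cth p_H$; combining the above display with $p \ge (1+\eps) p_H$ and the rewriting of $\theta_H$ yields
\[
  \mu_Q \ge (1-o(1))(1-4r\alpha)^{v_H-2}(1+\eps)^{e_H-1}\bigl(2-1/m_2(H)\bigr) \cdot e(Q) \log n.
\]
Since $H$ is nonbipartite edge-critical, $e_H \ge 3$, so $(1+\eps)^{e_H-1} \ge (1+\eps)^2$; choosing $\alpha$ small enough that $(1-4r\alpha)^{v_H-2}(1+\eps) \ge 1$ gives $\mu_Q \ge (1+\eps) e(Q) \log (n^2 p)$. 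The second estimate of \autoref{lemma:FQL-bounds} combined with $p \le \Cth p_H$ gives $\Delta_Q/\mu_Q \le \Clow \kappa \Cth^{e_H-1}\theta_H^{e_H-1}$, which is $\le \eps^2/100$ once $\kappa$ is small enough in terms of $\eps, \Cth, \Clow, H$. Finally, $m_Q = \kappa e(Q) n^{v_H-2} p^{e_H-1} \le \kappa \Cth^{e_H-1} \theta_H^{e_H-1} e(Q) \log n$ and $d_Q \le \sqrt{\eta}\, e(Q)\log n$, so $\nu_Q = m_Q + (r^2+1)(d_Q+1) \le \eps^2 \mu_Q / 100$ as soon as $\kappa$ and $\eta$ are small.

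For part~\ref{item:mu-Delta-nu-low-degree-dense} we split according to whether $Q \in \cQ_L^1$ or $Q \in \cQ_L^2$. If $Q \in \cQ_L^1$, then $\cF_Q' = \cFQL$ and $p > \Cth p_H$ gives
\[
  \mu_Q \ge (1-o(1))(1-4r\alpha)^{v_H-2}\Cth^{e_H-1}\bigl(2-1/m_2(H)\bigr) e(Q) \log n,
\]
which exceeds $200 r e(Q) \log n \ge 100 r e(Q) \log(n^2 p)$ once $\Cth$ is large in terms of $r$ and $H$; moreover, $\mu_Q^2/\Delta_Q \ge \mu_Q \log n/(\Clow \kappa n^{v_H-2} p^{e_H-1})$ and the factors of $n^{v_H-2}p^{e_H-1}$ cancel, giving an estimate of order $e(Q)\log n/\kappa$ that is also $\ge 100 r e(Q) \log(n^2 p)$ for small $\kappa$; and $\nu_Q = 8re(Q) + (r^2+1)(8re(Q)+1) = O(r^3 e(Q)) = o(\Lambda_Q)$ for large $n$. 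If $Q \in \cQ_L^2$, then $\cF_Q' = \cFQL(\cH_{i(Q)})$ with $q = \hat C \log n/(\kappa n^{v_H-2} p^{e_H-1})$; by \autoref{lemma:choices-of-cH-low-deg}~\ref{item:Q-good-low-deg}, passing from $\cFQL$ to $\cFQL(\cH_{i(Q)})$ scales $\mu_p(\,\cdot\,[\ext(S)])$ by at least $q/2$ and $\Delta_p(\,\cdot\,)$ by at most $2q^2$. Hence
\[
  \mu_Q \ge \tfrac{q}{2} \cdot (1-o(1))(1-4r\alpha)^{v_H-2} \pi_H r^{2-v_H} \cdot e(Q) n^{v_H-2}p^{e_H-1} = \Omega\bigl(\hat C/\kappa\bigr) \cdot e(Q) \log n,
\]
while the factor of $q$ cancels in the ratio $\mu_Q^2/\Delta_Q$, which is at least $1/8$ times the corresponding quantity for $\cFQL$ and is therefore again of order $e(Q)\log n/\kappa$. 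Both quantities exceed $100 r e(Q)\log(n^2 p)$ for $\hat C$ large and $\kappa$ small. Finally, $m_Q = \hat C e(Q)\log n$ and $d_Q \le \sqrt{\eta}\, e(Q)\log n$ give $\nu_Q \le 2 \hat C e(Q) \log n$, which is $\le \Lambda_Q/1000$ as soon as $\kappa$ is small enough in terms of $\hat C, \Clow, H$.

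The main obstacle is not conceptual but organizational: each required inequality tightens one of the parameters $\alpha, \Cth, \hat C, \kappa, \eta$ in terms of those already fixed, and the hierarchy established in \autoref{sec:constants} is precisely what lets the full system of constraints be satisfied simultaneously.
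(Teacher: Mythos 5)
Your proof is correct and follows essentially the same route as the paper's: unpacking \autoref{lemma:FQL-bounds}, converting $n^{v_H-2}p^{e_H-1}$ into multiples of $\log n$ via strict $2$-balancedness and the definition~\eqref{eq:theta_H} of $\theta_H$, and then checking each inequality against the parameter hierarchy ($\alpha \ll \eps$, $\kappa$ small, $\hat C/\kappa$ large, $\eta \le \eta(\kappa)$). The only quibble is in part~\ref{item:mu-Delta-nu-low-degree-sparse}, where your stated condition $(1-4r\alpha)^{v_H-2}(1+\eps)\ge 1$ leaves you with $(1+\eps)(1-o(1))$ rather than $(1+\eps)$ once the $(1-o(1))$ and the $O(\log\log n)$ slack in $\log(n^2p)$ are accounted for; you should keep a strict constant margin (the paper retains a full factor $(1+2\eps)$ for exactly this reason), which is immediate since $e_H\ge 3$ gives you $(1+\eps)^2$ to spend.
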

\begin{lemma}
  \label{lemma:mu-Delta-nu-high-degree}
  For every $Q \in \cQ_H$, we have $\Lambda_Q \gg k(Q)np \cdot \log D_Q$ and $\nu_Q \le \Lambda_Q/1000$.
\end{lemma}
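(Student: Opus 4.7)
The plan is to read off the required bounds directly from Lemma~\ref{lemma:high-degree-bounds} and the explicit formulas for $m_Q$ and $D_Q$ in the $Q \in \cQ_H$ row of~\autoref{table:asd}, and then observe that each of the three terms in the relevant minimum exceeds $k(Q) np$ by a factor tending to infinity, whereas $\log D_Q$ is only $O(\log n)$.

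First, I will note that $\alpha$ is small enough that $(1-4r\alpha)/r \ge 1/(2r)$, so every $S \in \cS_Q$ is compatible with $Q$ and has $\min_i |S_i| \ge n/(2r)$, putting us into the hypothesis of Lemma~\ref{lemma:high-degree-bounds}. Since $\cF_Q'$ was defined as the family $\cF$ supplied by that lemma, its conclusion gives
\[
\Lambda_Q \ge \chigh \cdot \min\{T_1,\, T_2,\, T_3\}, \qquad T_1 \coloneqq k(Q) n^{v_H-1}p^{e_H},\ \ T_2 \coloneqq k(Q) n^{1+\lambda_H} p,\ \ T_3 \coloneqq n^2 p.
\]
Next I will plug $m_Q = 32 k(Q) np$ and $M(k(Q)) = \min\{T_1, T_3\}$ into the formula $D_Q = 2v_H M(k(Q))/m_Q$, obtaining
\[
D_Q = \frac{v_H}{16} \cdot \min\left\{n^{v_H-2} p^{e_H-1},\ \frac{n}{k(Q)}\right\},
\]
which is polynomial in $n$, so $\log D_Q = O(\log n)$.

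The heart of the matter is to show that each of $T_1/(k(Q)np)$, $T_2/(k(Q)np)$, $T_3/(k(Q)np)$ grows strictly faster than $\log D_Q$. These ratios equal $n^{v_H-2}p^{e_H-1}$, $n^{\lambda_H}$, and $n/k(Q)$, respectively. Strict $2$-balancedness of $H$ gives $(e_H - 1)/m_2(H) = v_H - 2$, so $p \ge (1+\eps) p_H$ yields $n^{v_H-2} p^{e_H-1} \ge \big((1+\eps)\theta_H\big)^{e_H-1} \log n \to \infty$; the term $n^{\lambda_H}$ is polynomial in $n$; and $n/k(Q) \to \infty$ since every $Q \in \cQ_H$ has $k(Q) = |X_Q| = o(n)$ by definition. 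Each of these dominates $\log D_Q = O(\log n)$, so $\Lambda_Q/(k(Q)np \log D_Q) \to \infty$, establishing the first claim.

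For the second claim, I will use that $\nu_Q = m_Q + (r^2+1)(d_Q+1) = O_r(k(Q)np)$ (since $d_Q = m_Q = 32 k(Q) np$ in the high-degree row of~\autoref{table:asd}), together with the fact that $\log D_Q \to \infty$ -- which holds in both cases defining $D_Q$, as each minimand above tends to infinity. Combined with $\Lambda_Q \gg k(Q)np \log D_Q$, this gives $\Lambda_Q \ge 1000 \nu_Q$ for all sufficiently large $n$. The only non-trivial point in the whole argument is aligning the case split inside $D_Q$ against the minimum in the lower bound for $\Lambda_Q$; since both the numerator ($T_1$ or $T_3$) and the denominator ($\log D_Q$) are dictated by the same case, the verification reduces to the routine comparison above. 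No genuine obstacle is anticipated.
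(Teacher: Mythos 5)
Your skeleton is right --- you invoke \autoref{lemma:high-degree-bounds} for the lower bound $\Lambda_Q \ge \chigh\min\{T_1,T_2,T_3\}$, compute $D_Q=\frac{v_H}{16}\min\{n^{v_H-2}p^{e_H-1},\,n/k(Q)\}$ correctly from the table, and the treatment of $\nu_Q$ is fine --- but the quantitative comparison you actually carry out for the first claim has a genuine gap. You argue that each ratio $T_i/(k(Q)np)$ tends to infinity and therefore dominates $\log D_Q = O(\log n)$. A quantity tending to infinity does not dominate $O(\log n)$, and two of your three ratios can in fact be $O(\log n)$ or smaller: in the boundary regime $p = \Theta(p_H)$ one has $T_1/(k(Q)np)=n^{v_H-2}p^{e_H-1}=\Theta(\log n)$, and since the definition of $\cQ_H$ only requires $k(Q)=o(n)$, the ratio $T_3/(k(Q)np)=n/k(Q)$ can grow as slowly as you like (e.g.\ like $\log\log n$). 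So the comparison against the crude bound $\log D_Q=O(\log n)$ fails exactly in the cases that matter.

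The repair --- which is what the paper does --- is to use a case-adapted bound on $\log D_Q$ rather than $O(\log n)$. If the minimum in the lower bound for $\Lambda_Q$ is achieved at $M\in\{T_1,T_3\}$, then $D_Q \le v_H M/(k(Q)np)$, so the required inequality becomes $x \gg \log x$ for $x = M/(k(Q)np)\to\infty$, which is automatic; if the minimum is achieved at $T_2=k(Q)n^{1+\lambda_H}p$, then the crude bound $\log D_Q\le \log(n^2)$ does suffice, because $n^{\lambda_H}\gg\log n$. Your closing sentence about ``aligning the case split'' gestures at exactly this, but the ``routine comparison above'' that you refer back to is the invalid one against $O(\log n)$, and you never invoke $x\gg\log x$; moreover the alignment is not literal, since $\Lambda_Q$'s minimum ranges over three terms while $D_Q$'s ranges over two, which is why the $T_2$ case needs its own (easy) argument. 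As written, the proof of $\Lambda_Q\gg k(Q)np\cdot\log D_Q$ does not go through; the second claim $\nu_Q\le\Lambda_Q/1000$ is unaffected, since it only needs $\Lambda_Q\gg k(Q)np$, which \autoref{lemma:high-degree-bounds} gives directly.
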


We first show how to use the two lemmas to complete the proof of \autoref{thm:main}.
Note that it is enough to show that the assumptions of \autoref{lemma:sufficient-bound-cGQ} are satisfied.
In view of~\eqref{eq:cGQ-bound}, it suffices to estimate $(D_QZ)^{d_Q} \cdot \xi_Q$ and the quantity $\Xi_Q$ defined by
\[
  \Xi_Q \coloneqq D_Q^{d_Q} \cdot e^{-3d_Q\log(n^2p/d_Q)}.
\]

Since $D_Q \le n^2p/d_Q$ for all $Q \in \cQ$, we have
\[
  \Xi_Q \le \exp\big(-2d_Q\log(n^2p/d_Q)\big).
\]
Further, since $e(Q) \le d_Q \ll n^2p$ for all $Q \in \cQ_L$ and the function $x \mapsto x \log (a/x)$ is increasing on $(0,a/e)$, we further have
\[
  \Xi_Q \le \exp\big(-2e(Q) \log(n^2p/e(Q))\big).
\]
Similarly, since $k(Q)np \le d_Q \ll n^2p$ for all $Q \in \cQ_H$, we also have
\[
  \Xi_Q \le \exp\big(-k(Q)np \log(n/k(Q))\big) \le \exp(-3k(Q)np).
\]
In order to estimate $(D_QZ)^{d_Q} \cdot \xi_Q$, we split into three cases.

\smallskip
\noindent
\textit{Case 1. $p \le \Cth p_H$ and $Q \in \cQ_L$.}
By \autoref{lemma:mu-Delta-nu-low-degree}~\ref{item:mu-Delta-nu-low-degree-sparse}, we may apply \autoref{cor:Janson-matchings} with $\gamma \coloneqq \eps/10$ to obtain
\[
  \begin{split}
    \xi_Q & \le r! \cdot \exp\big(-(1-\eps/10) \cdot \mu_Q + 2\Delta_Q\big) \le r! \cdot \exp\big(-(1-\eps/10-\eps^2/50) \cdot \mu_Q\big) \\
    & \le r! \cdot \exp\big(-(1-\eps/7)(1+\eps) \cdot e(Q) \log(n^2p)\big) \le \exp\big(-(1+\eps/2) \cdot e(Q)\log(n^2p)\big).
  \end{split}
\]
Further,
\[
  (D_QZ)^{d_Q} \le \left(\frac{4e_HZ}{\kappa}\right)^{\sqrt{\eta} e(Q)\log n} \le \exp\left(\eps/4 \cdot e(Q)\log(n^2p)\right).
\]
We may thus conclude that
\[
  \frac{\Pr(\cG_Q)}{p^{e(Q)}}  \le (D_QZ)^{d_Q} \cdot \xi_Q + \Xi_Q  \le e^{-(1+\eps/4) e(Q) \log (n^2p)} + e^{-2e(Q)\log(n^2p/e(Q))},
\]
as needed.

\smallskip
\noindent
\textit{Case 2. $p > \Cth p_H$ and $Q \in \cQ_L$.}
By~\autoref{lemma:mu-Delta-nu-low-degree}~\ref{item:mu-Delta-nu-low-degree-dense} and \autoref{cor:extended-Janson-matchings}, we have
\[
  \xi_Q \le r! \cdot \exp(-\Lambda_Q/10) \le r! \cdot \exp\big(-10r \cdot e(Q) \log (n^2p)\big).
\]  
Further,
\[
  (D_QZ)^{d_Q} \le \max\left\{\left(\frac{Zn^2p}{8re(Q)}\right)^{8re(Q)}, \, \left(\frac{\Cpar Z}{\kappa}\right)^{\sqrt{\eta}e(Q)\log n}\right\} \le \exp\big(9re(Q)\log (n^2p)\big),
\]
We may thus conclude that
\[
  \frac{\Pr(\cG_Q)}{p^{e(Q)}}  \le (D_QZ)^{d_Q} \cdot \xi_Q + \Xi_Q  \le r! \cdot e^{-re(Q) \log (n^2p)} + e^{-2e(Q)\log(n^2p/e(Q))},
\]
as needed.

\smallskip
\noindent
\textit{Case 3. $Q \in \cQ_H$.}
In this case, $d_Q = 32k(Q)np$ and thus, by~\autoref{lemma:mu-Delta-nu-high-degree}, we may apply \autoref{cor:extended-Janson-matchings} to obtain
\[
  (D_QZ)^{d_Q} \cdot \xi_Q \le r! \cdot \exp\big(32\log(D_QZ) \cdot k(Q)np - \Lambda_Q/10\big) \le \exp(-3k(Q) np).
\]  
We conclude that
\[
  \frac{\Pr(\cG_Q)}{p^{e(Q)}}  \le (D_QZ)^{d_Q} \cdot \xi_Q + \Xi_Q  \le 2e^{-3k(Q)np},
\]
as needed.

\begin{proof}[Proof of~\autoref{lemma:mu-Delta-nu-low-degree}]
  Assume first that $p \le \Cth p_H$.  Since in this case we set $\cF_Q' = \cFQL$, it follows from \autoref{lemma:FQL-bounds} that
  \[
    \begin{split}
      \mu_Q
      & \ge (\pi_H-o(1)) \cdot e(Q) \cdot \big((1-4\alpha r) \cdot n/r\big)^{v_H-2} p^{e_H-1} \\
      & \ge (1-4\alpha r-o(1))^{v_H-2} \cdot e(Q) \cdot r^{2-v_H} \pi_H n^{v_H-2} \cdot ((1+\eps)p_H)^{e_H-1}.
    \end{split}
  \]
  Since $e_H \ge 3$ and $\alpha \ll \eps$, it follows from the definition of $p_H$ and~\eqref{eq:theta_H} that
  \[
    \begin{split}
      \mu_Q & \ge (1+2\eps) \cdot e(Q) \cdot r^{2-v_H} \pi_H \theta_H^{e_H-1} \cdot \log n \\
      & = (1+2\eps) \cdot e(Q) \cdot \left(2-\frac{1}{m_2(H)}\right) \cdot \log n
      \ge (1+\eps) \cdot e(Q) \cdot \log(n^2p),
    \end{split}
  \]
  where the last inequality follows as $p \le \Cth p_H = O(n^{-1/m_2(H)} \log n)$.
  Furthermore, \autoref{lemma:FQL-bounds} also yields
  \[
    \frac{\Delta_Q}{\mu_Q} \le \frac{\Clow \kappa n^{v_H-2}p^{e_H-1}}{\log n}
    \le \frac{\Clow \kappa n^{v_H-2}(\Cth p_H)^{e_H-1}}{\log n}
    = \Clow \kappa (\Cth\theta_H)^{e_H-1} \le \frac{\eps^2}{100}.
  \]
  Finally, note that
  \[
    \begin{split}
      \nu_Q & = m_Q + (r^2+1)(d_Q+1) \le \kappa e(Q) n^{v_H-2}p^{e_H-1} + 2r^2\sqrt{\eta}e(Q)\log n \\
      & \le \kappa (\Cth\theta_H)^{e_H-1} e(Q) \log n + 2r^2\sqrt{\eta}e(Q)\log n \le (\eps^2/100) \cdot \mu_Q.
    \end{split}
  \]

  Assume now that $p > \Cth p_H$ and $Q \in \cQ_L^1$.  Since in this case we also have $\cF_Q' = \cFQL$, we may again use \autoref{lemma:FQL-bounds} to conclude that
  \[
    \begin{split}
      \mu_Q & \ge (\pi_H-o(1)) \cdot e(Q) \cdot \big((1-4\alpha r) \cdot n/r\big)^{v_H-2} p^{e_H-1} \\
      & \ge (\pi_H/2) \cdot e(Q) \cdot r^{-v_H} n^{v_H-2} (\Cth p_H)^{e_H-1} \\
      & = (\pi_H/2) \cdot e(Q) \cdot r^{-v_H} \cdot (\Cth\theta_H)^{e_H-1} \cdot \log n \ge 100r \cdot e(Q) \log (n^2p).
    \end{split}
  \]
  and that
  \begin{equation}
    \label{eq:mu-squared-Delta-dense}
    \frac{\mu_Q^2}{\Delta_Q} = \frac{\mu_Q}{\Delta_Q / \mu_Q} \ge \frac{(\pi_H/2) \cdot e(Q) \cdot (n/(2r))^{v_H-2}p^{e_H-1}}{\kappa \Clow n^{v_H-2}p^{e_H-1} / \log n} \ge 100r \cdot e(Q) \log (n^2p).
  \end{equation}
  Finally, note that
  \[
    \nu_Q \le (r^2+2)(d_Q+1) \le 10r^3e(Q) \le r^3\Lambda_Q/\log n \le \Lambda_Q/1000.
  \]

  Last but not least, assume that $p > \Cth p_H$ and $Q \in \cQ_L^2$.
  By the definition of $\cF_Q'$ (see \autoref{lemma:choices-of-cH-low-deg}~\ref{item:first-Q-good-low-deg} and~\ref{item:second-Q-good-low-deg}) and \autoref{lemma:FQL-bounds},
  \[
    \begin{split}
      \mu_Q & \ge (q/2) \cdot \min_{S \in \cS}\mu_p(\cFQL[\ext(S)]) \ge q \cdot (\pi_H/4) \cdot e(Q) \cdot (2r)^{2-v_H} n^{v_H-2} p^{e_H-1} \\
      & \ge \frac{\hat{C} \pi_H}{(2r)^{v_H}\kappa} \cdot e(Q) \log n \ge 100r \cdot e(Q) \log (n^2p),
    \end{split}
  \]
  whereas, for every $S \in \cS_Q$, analogously to~\eqref{eq:mu-squared-Delta-dense},
  \[
    \frac{\mu_{Q,S}^2}{\Delta_Q} \ge \frac{(q/2)^2 \cdot \mu_p(\cFQL[\ext(S)])^2}{2q^2 \cdot \Delta_p(\cFQL)} \ge \frac{\pi_H \cdot e(Q) \log n}{16 \cdot (2r)^{v_H-2} \kappa \Clow} \ge 100r \cdot e(Q)\log(n^2p).
  \]
  Finally, note that
  \[
    \begin{split}
      \nu_Q & = m_Q + (r^2+1)(d_Q+1) \le \kappa n^{v_H-2} p^{e_H-1}q \cdot e(Q) + 5r^3\eta/\kappa \cdot e(Q) \log n \\
      & = (\hat{C} + 5r^3\eta/\kappa) \cdot e(Q)\log n \le 2\hat{C} \cdot e(Q) \log n,
    \end{split}
  \]
  where the last inequality holds as $\hat{C}$ is a constant that depends only on $H$ whereas $\eta \le \eta(H, \kappa)$.  Finally, since also $\Clow$ depends only on $H$ and $\kappa \le \kappa(H)$, is follows that $\nu_Q \le \mu_Q/1000$ as well as $\nu_Q \le \mu_Q^2/\Delta_Q$.  This completes the proof of the lemma.
\end{proof}
\begin{proof}[Proof of \autoref{lemma:mu-Delta-nu-high-degree}]
  Since $\cF_Q' \subseteq \cFQH$ is the hypergraph that satisfies the assertion of \autoref{lemma:high-degree-bounds}, we have
  \[
    \Lambda_Q \ge \chigh \cdot \min\big\{k(Q) \cdot n^{v_H-1}p^{e_H}, k(Q) \cdot n^{1+\lambda}p, n^2p\big\} \gg k(Q) \cdot np.
  \]
  Suppose first that the minimum above is achieved at $M \in \{k(Q) \cdot n^{v_H-1}p^{e_H}, n^2p\}$.
  Since $M \gg k(Q) np$ and $D_Q \le \frac{v_H M}{k(Q) np}$, we have
  \[
    \Lambda_Q \ge \frac{\chigh}{v_H} \cdot k(Q) np \cdot \frac{v_HM}{k(Q) np} \gg k(Q) np \cdot \log\left(\frac{v_HM}{k(Q)np}\right) \ge k(Q)np \cdot \log D_Q.
  \]
  Suppose now that the minimum above is achieved at $k(Q) \cdot n^{1+\lambda}p$.  In this case, since clearly $D_Q \le n^2$, we have
  \[
    \Lambda_Q \ge \chigh \cdot k(Q) n^{1+\lambda}p \gg k(Q)np \cdot \log D_Q.
  \]
  Finally, note that
  \[
    \nu_Q \le m_Q + (r^2+1)(d_Q+1) \le 40r^2k(Q) \cdot np \le \Lambda_Q/1000.
  \]
  This completes the proof of the lemma.
\end{proof}
    
\bibliographystyle{amsplain}
\bibliography{bibliography}

\appendix

\section{$H$-free subgraphs of dense graphs}
\label{sec:proof-AlonShapiraSudakov}

In this short section, we present a proof of \autoref{thm:AlonShapiraSudakov}.  Our argument is a minor adjustment of the proof of~\cite[Theorem~6.1]{AloShaSud09}.  It relies on the following generalisation of the well-known result of Andr\'{a}sfai, Erd\H{o}s, and S\'{o}s~\cite{AndErdSos74} due to Erd\H{o}s and Simonovits~\cite{ErdSim73}.
\begin{thm}[\cite{AndErdSos74, ErdSim73}]
  \label{thm:edge-critical-min-degree}
  Let $H$ be an edge-critical graph with $\chi(H) = r + 1 \ge 3$.  Every $n$-vertex, $H$-free graph with minimum degree exceeding $\frac{3r - 4}{3r - 1} \cdot n$ is $r$-partite.
\end{thm}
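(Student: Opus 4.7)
The plan is to prove the contrapositive: given an $n$-vertex graph $G$ with $\delta(G) > \frac{3r-4}{3r-1}n$ that is not $r$-partite, I would construct a copy of $H$ in $G$. I would start by fixing an $r$-partition $\Pi = (V_1, \dotsc, V_r)$ of $V(G)$ that maximises the number of crossing edges; since $G$ is not $r$-partite, some part (say $V_1$) contains an edge $uv$. Max-cut properties yield $\deg_G(w, V_j) \ge \deg_G(w, V_i)$ for every $w \in V_i$ and every $j \in \br{r}$, in particular $\deg_G(w, V_i) \le \deg_G(w)/r$, and a short averaging argument comparing $\Pi$ to partitions obtained by moving a single vertex between parts forces each $|V_j|$ to be at least a positive fraction of $n$.

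Next I would invoke the edge-criticality of $H$ to pick an edge $f \in E(H)$ with $\chi(H \setminus f) = r$, together with a proper $r$-colouring $\chi \colon V(H) \to \br{r}$ of $H \setminus f$ sending both endpoints of $f$ to colour $1$. The goal becomes to exhibit an embedding $\varphi \colon V(H) \hookrightarrow V(G)$ mapping $f$ to $uv$ and each colour class $\chi^{-1}(i)$ into $V_i$; since every non-$f$ edge of $H$ crosses colour classes of $\chi$, such an embedding automatically lies in $E(G)$ provided the relevant common neighbourhoods inside the $V_i$'s remain non-empty throughout the construction.

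The embedding is built greedily, processing the remaining vertices of $H$ in an order where each new vertex $x$ has at most $r - 1$ previously-embedded $H$-neighbours $\varphi(y_1), \dotsc, \varphi(y_k)$. The key combinatorial input is an iterated neighbourhood refinement in the spirit of the Andrásfai--Erdős--Sós argument for $H = K_{r+1}$: one restricts first to $W_1 \coloneqq V_{\chi(x)} \cap N_G(\varphi(y_1))$, then to $W_s \coloneqq W_{s-1} \cap N_G(\varphi(y_s))$ for $s = 2, \dotsc, k$, and tracks how the min-degree hypothesis propagates inside the shrinking set via $|N_G(\varphi(y_s)) \cap W_{s-1}| \ge \deg_G(\varphi(y_s)) + |W_{s-1}| - n$. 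The threshold $\frac{3r-4}{3r-1}$ is precisely what permits up to $r - 1$ successive refinements to preserve positivity, mirroring the recursive bound $\delta/|N(v)| \ge \frac{3(r-1)-4}{3(r-1)-1}$ of the inductive AES proof, with the Andrásfai graphs witnessing tightness.

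The chief obstacle is the delicate iteration at the worst-case vertex -- a colour-$1$ vertex of $H$ distinct from the endpoints of $f$ whose $H$-neighbours populate all of $V_2, \dotsc, V_r$ -- which must be placed in $V_1 \setminus \{u, v\}$ in the common neighbourhood of $r - 1$ already-embedded vertices. Controlling this case relies on exploiting that each vertex of $H$ has at most one neighbour in any given colour class of $\chi$ and on calibrating the processing order so that at every step the shrinking set $W_s$ retains the minimum-degree slack needed to admit one more refinement.
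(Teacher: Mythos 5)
The paper does not actually prove this statement---it is quoted from Andr\'asfai--Erd\H{o}s--S\'os and Erd\H{o}s--Simonovits---so the only question is whether your sketch would work, and it would not: a greedy embedding into a max-cut partition fails numerically at the threshold $\frac{3r-4}{3r-1}$. The non-neighbourhood of a single vertex can have size up to $n-\delta(G)$, which is just below $\frac{3}{3r-1}n$, and this already exceeds $\frac{n}{r}$; so your first refinement bound $|W_1|\ge |V_{\chi(x)}|+\deg_G(\varphi(y_1))-n$ has a negative right-hand side whenever the target part has size about $n/r$ (for $r=2$ this is $\frac{n}{2}-\frac{3n}{5}<0$), and the iteration never gets started. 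A concrete instance: for $H=C_5$ and $r=2$ your scheme must place the middle vertex $x_3$ of the path $H\setminus f$ in $N(\varphi(x_2))\cap N(\varphi(x_4))\cap V_1$, but at minimum degree just above $\frac{2n}{5}<\frac{n}{2}$ two vertices need not have \emph{any} common neighbour in $G$, let alone one in a prescribed part. Placing a vertex with $k$ embedded neighbours inside a given part of size $\sim n/r$ genuinely requires $\delta(G)>\bigl(1-\frac{1}{rk}\bigr)n$, i.e.\ minimum degree $1-O(1/(r\Delta(H)))$; that is exactly the regime of \autoref{thm:AlonShapiraSudakov}, and your argument, if completed, would reprove that weaker statement rather than the present theorem. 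No reordering of the vertices of $H$ can repair this: the obstruction is that the common neighbourhood of the embedded vertices, even when of size $\Omega(n)$, need not meet the particular part you are aiming for. A further warning sign is that your sketch never uses that $n$ is large, yet for general edge-critical $H$ the statement is false for small $n$ (e.g.\ $K_{v_H-1}$ is $H$-free with minimum degree $n-1$ and chromatic number $v_H-1>r$), so any correct proof must invoke largeness of $n$ somewhere.

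The genuine proofs are structurally different. For $H=K_3$ one takes a shortest odd cycle $C_{2k+1}$ in a non-bipartite triangle-free $G$, notes that every vertex of $G$ has at most two neighbours on it, and gets $(2k+1)\delta(G)\le 2n$, forcing $k\le 1$; for $H=K_{r+1}$ the argument proceeds by a (rather delicate) induction on $r$ through vertex neighbourhoods, whose relative minimum degree stays above the level-$(r-1)$ threshold. The Erd\H{o}s--Simonovits extension to edge-critical $H$ is then obtained for large $n$ by a stability argument: the graph is close to the Tur\'an graph, and one shows that an internal edge together with the minimum-degree hypothesis forces a copy of $H$ crossing the near-partition. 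If you want a statement that your greedy scheme does establish, it is \autoref{thm:AlonShapiraSudakov}; the appendix then deduces nothing new about the present theorem but rather uses it as a black box via a vertex-deletion argument.
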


We will also make use of the following easy lemma.

\begin{lemma}
  \label{lemma:r-partite-subgraph}
  Let $r \ge 2$ be an integer and suppose $G'$ is an $r$-partite subgraph of a graph~$G$.  Then, $G$ has an $r$-partite subgraph of size at least $e(G') + \frac{r-1}{r} \cdot e(G - V(G'))$.
\end{lemma}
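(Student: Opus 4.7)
The plan is a short probabilistic argument. I would begin by fixing a partition $(V_1, \dots, V_r)$ of $V(G')$ that witnesses the $r$-partiteness of $G'$, so that every edge of $G'$ crosses this partition and thus $e(G' \cap \ext(V_1, \dots, V_r)) = e(G')$. Let $W \coloneqq V(G) \setminus V(G')$, so that $e(G - V(G')) = e(G[W])$. The goal is to extend $(V_1, \dots, V_r)$ to a partition $(U_1, \dots, U_r)$ of $V(G)$ (with $V_i \subseteq U_i$ for every $i$) such that the resulting $r$-partite subgraph of $G$ (namely $G \cap \ext(U_1, \dots, U_r)$) has at least the claimed number of edges.

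Next, I would extend the partition by placing each vertex of $W$ independently and uniformly at random into one of the $r$ parts. Every edge of $G'$ is crossing by construction, which contributes exactly $e(G')$ edges. For each edge $\{u,v\} \in G[W]$, the endpoints are in different parts with probability $\frac{r-1}{r}$, so by linearity of expectation the expected number of edges of $G[W]$ that cross the random partition is $\frac{r-1}{r} \cdot e(G[W])$. Hence there exists a deterministic extension for which the number of crossing edges contributed by $G[W]$ is at least $\frac{r-1}{r} \cdot e(G - V(G'))$. (Edges of $G$ between $V(G')$ and $W$ may yield additional crossing edges, but these can simply be discarded from the count.)

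Combining the two contributions, the $r$-partite subgraph $G \cap \ext(U_1, \dots, U_r)$ has at least $e(G') + \frac{r-1}{r} \cdot e(G - V(G'))$ edges, which is the desired bound. There is no genuine obstacle here; the only thing to be careful about is to ensure that the extended partition respects the original $(V_1, \dots, V_r)$, so that the contribution $e(G')$ from $G'$ is preserved in full while the contribution from $G[W]$ is handled by the random assignment.
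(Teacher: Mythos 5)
Your proof is correct. The paper states this lemma without proof (it is labelled an ``easy lemma''), and your argument --- extending the witnessing partition of $V(G')$ by assigning each vertex of $W = V(G)\setminus V(G')$ independently and uniformly to one of the $r$ parts, so that every edge of $G'$ still crosses and each edge of $G[W]$ crosses with probability $\frac{r-1}{r}$ --- is the standard averaging argument one would supply, and it is complete as written.
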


\begin{proof}[Proof of \autoref{thm:AlonShapiraSudakov}]
  Suppose that an $n$-vertex graph $G$ satisfies
  \[
    \delta(G) \ge \left(1 - \frac{3}{4(r-1)(3r-1)}\right) n + 1.
  \]
  Let $\Gamma$ be the largest $r$-partite subgraph of $G$ and let $F$ be the largest $H$-free subgraph of $G$.
  Since $\Gamma$ is $H$-free and by \autoref{lemma:r-partite-subgraph}, with $G' = \emptyset$,
  \[
    \begin{split}
      e(F) & \ge e(\Gamma) \ge \frac{r-1}{r} \cdot e(G) \ge \frac{r-1}{r} \left(\left(1 - \frac{3}{4(r-1)(3r-1)}\right) n + 1\right) \cdot \frac{n}{2} \\
    &= \frac{12r^2 -16r + 1}{8r(3r-1)} \cdot n^2 +\frac{r-1}{2r} \cdot n.
  \end{split}
  \]
  We now construct a sequence $F = F_n \supseteq F_{n-1} \supseteq \dotsb \supseteq F_s$ as follows:
  If $F_k$ has a vertex of degree at most $\frac{3r - 4}{3r - 1} \cdot k$ we delete that vertex to obtain $F_{k-1}$; otherwise, if $\delta(F_k) > \frac{3r-4}{3r-1}$ and let $s = k$.  Since $F_s \subseteq F$ is $H$-free, \autoref{thm:edge-critical-min-degree} implies that $F_s$ is $r$-partite.  Hence,
  \[
    \begin{split}
      \frac{r-1}{2r} \cdot s^2 &\ge e(F_s) \ge e(F) - \sum_{k=s+1}^n \frac{3r - 4}{3r - 1} \cdot k
      = e(F) - \frac{3r - 4}{3r - 1} \cdot \frac{(n-s)(n+s+1)}{2} \\
      & \ge \frac{12r^2 -16r + 1}{8r(3r-1)} \cdot n^2 + \frac{r-1}{2r} \cdot n - \frac{3r-4}{3r-1} \cdot \frac{n^2-s^2+n}{2} \\
      & > \frac{12r^2 -16r + 1}{8r(3r-1)} \cdot n^2 - \frac{3r - 4}{3r - 1} \cdot \frac{n^2 - s^2}{2}.
    \end{split}
  \]
  This implies that $\frac{s^2}{2r(3r-1)} > \frac{n^2}{8r(3r-1)}$ and so $s > \frac{n}{2}$.
  Let $m \coloneqq e(G - V(F_s))$ and note that
  \[
    m \ge (n-s) \cdot \delta(G) - \binom{n-s}{2} \ge \left(\frac{12r^2 - 16r + 1}{4(r-1)(3r-1)} \cdot n + 1\right) \cdot (n-s) - \frac{(n-s)^2}{2}.
  \]
  By \autoref{lemma:r-partite-subgraph},
  \[
    \begin{split}
      e(\Gamma) &\ge e(F_s) + \frac{r-1}{r} \cdot m \ge e(F) - \frac{3r - 4}{3r - 1} \cdot \frac{(n-s)(n+s+1)}{2} + \frac{r-1}{r} \cdot m \\
      &\ge e(F) - \frac{3r - 4}{3r - 1} \cdot \frac{n^2 - s^2}{2} + \frac{r - 1}{r}\left(\frac{12r^2 - 16r + 1}{4(r-1)(3r-1)} \cdot n(n-s) - \frac{(n-s)^2}{2}\right) \\
      &= e(F) + \frac{(n-s)(2s-n)}{4r(3r-1)}.
    \end{split}
  \]
  Since $e(F) \ge e(\Gamma)$ and $s > n/2$, it must be that $s=n$ and thus $\delta(F) = \delta(F_n) > \frac{3r - 4}{3r - 1} \cdot n$.  Consequently, \autoref{thm:edge-critical-min-degree} implies that $F$ is $r$-partite.
\end{proof}

\end{document}